\numberwithin{equation}{section}
\newtheorem{theorem}{Theorem}[section]
\newtheorem{lemma}[theorem]{Lemma}
\newtheorem{proposition}[theorem]{Proposition}
\newtheorem{corollary}[theorem]{Corollary}
\newtheorem{assumption}[theorem]{Assumption}
\theoremstyle{definition}
\newtheorem{definition}[theorem]{Definition}
\theoremstyle{remark}
\newtheorem{remark}[theorem]{Remark}
\newcommand{\R}{\mathbb{R}}
\newcommand{\E}{\mathbb{E}}
\newcommand{\C}{\mathbb{C}}
\newcommand{\ii}{\mathrm{i}}
\newcommand{\Var}{\mathrm{Var}}
\begin{document}

\title{Rough Heston model as the scaling limit of bivariate cumulative heavy-tailed INAR processes: Weak-error bounds and option pricing
}

\author{%
Yingli Wang\thanks{School of Mathematics, Shanghai University of Finance and Economics, Shanghai, People's Republic of China; naturesky1994@gmail.com.}%
\and
Zhenyu Cui\thanks{School of Business, Stevens Institute of Technology, Hoboken, NJ 07030, United States of America; zcui6@stevens.edu.}
\and
Lingjiong Zhu\thanks{Corresponding author.  Department of Mathematics, Florida State University, Tallahassee, Florida, United States of America; zhu@math.fsu.edu. All code (including FFT acceleration and option pricers) is available at \protect\url{https://github.com/gagawjbytw/INAR-rough-Heston}.}
}

\date{\today}

\maketitle


\begin{abstract}
We study nearly unstable bivariate cumulative heavy-tailed INAR($\infty$) processes and show that, under a one-factor parameterization and a suitable scaling, they converge to the rough Heston model. This yields a discrete-time microstructural route to the joint price-variance dynamics and gives explicit formulas linking the INAR asymmetry parameters to the leverage correlation and diffusion scale of the limiting volatility process.
On the pricing side, we derive the exact finite-$\tau$ transform recursion and reduce it, in the diffusive scaling regime, to a quadratic discrete Volterra equation. We then compare this discrete equation with the continuous fractional Riccati equation from the rough Heston model. Under an admissible-strip assumption and local-in-frequency bounds, we obtain weak-error estimates for the truncated Carr--Madan pricing functional on bounded frequency windows of the form $C_1\tau^{-\alpha}+C_2(\alpha)\tau^{-(1-\alpha)}$, where the second branch comes from the discrete-to-continuous Volterra comparison. The coefficient $C_2(\alpha)$ collects the vanishing contributions arising from both the weakly singular baseline quadrature and the discrete-to-continuous resolvent comparison, and satisfies $C_2(\alpha)\to0$ as $\alpha\uparrow1^-$. 
We also develop an FFT-accelerated CDQ simulator with $\mathcal O(\tau\log^2\tau)$ complexity per path and use it to price European and path-dependent options, examine the classical limit $\alpha=1$, and illustrate implied-volatility diagnostics.
\end{abstract}

\noindent%
{\it Keywords:} Rough volatility, INAR($\infty$), scaling limit, FFT-based simulation, implied volatility surface, weak-error bounds for option pricing.\\

\noindent%
{\it MSC:}  60G22, 60H35, 91G20, 62M10, 60F17.

\vfill



\section{Introduction}

The rough Heston model of \cite{el2019characteristic} is a stochastic-volatility model with discounted asset price and variance dynamics
\begin{align*}
&dS_t = S_t\sqrt{V_t}\,dW_t,
\\
&V_t = V_0 + \frac{1}{\Gamma(\alpha)}\int_0^t (t-s)^{\alpha-1}\gamma(\theta - V_s)\,ds
      + \frac{1}{\Gamma(\alpha)}\int_0^t (t-s)^{\alpha-1}\gamma\nu \sqrt{V_s}\,dB_s,
\end{align*}
where $\gamma,\theta,\nu,V_0$ are positive constants, $(W_{t})_{t\geq 0}$ and $(B_{t})_{t\geq 0}$ are standard Brownian motions with correlation $\rho$, and $\alpha\in(1/2,1)$ governs the roughness of the volatility path. It may be viewed as a rough-volatility extension of the classical Heston model \cite{heston1993closed}, motivated by empirical evidence that volatility is rough \cite{gatheral2018volatility}. Early pricing evidence under rough volatility and its implications for short-maturity skew appear in \cite{BayerFrizGatheral2015PricingRough}, while \cite{KellerResselMajid2020Comparison} clarify how rough and classical Heston specifications affect the volatility surface.

A central theoretical breakthrough is due to \cite{el2019characteristic}, who connect nearly unstable Hawkes processes to fractional volatility models and derive the characteristic function of the rough Heston log-price through a fractional Riccati equation. Their work builds on the scaling-limit theory for nearly unstable Hawkes processes developed in \cite{jaisson2015limit,jaisson2016rough}. This Hawkes-based route provides a continuous-time microstructural foundation for rough Heston. A natural question is whether one can obtain a parallel foundation in discrete time.

A convenient discrete framework is provided by INAR($\infty$) processes with Poisson thinning. If $(X_n)$ denotes the count process, then
\[
X_n\mid\mathcal F_{n-1}\sim \mathrm{Poisson}(\lambda_n),\qquad
\lambda_n=\mu+\sum_{s=1}^{n-1}\alpha_{n-s}X_s,
\]
and the cumulative process $N_n=\sum_{s=1}^n X_s$ may be viewed as a discrete Hawkes process.

Hawkes processes were introduced by \cite{hawkes1971spectra} and have since become a standard model for self-exciting event data. Functional limit theorems for Hawkes processes in various regimes are developed, for example, in \cite{bacry2013some,horst2026functional}. Their discrete-time analogs, cumulative INAR($\infty$) processes, provide an equally natural framework for count data observed on a time grid. The structural relation between INAR($\infty$) and Hawkes processes is well known: \cite{kirchner2016hawkes} develops the INAR($\infty$) framework and proves a discrete-to-continuous approximation theorem toward Hawkes processes. For broader discussions of Poisson autoregressive models and their links with INAR and Hawkes processes, see \cite{fokianos2021multivariate,huang2023nonlinear}. Discrete Hawkes models have also appeared in applications; see, for example, \cite{xu2022self}.

For rough-volatility scaling limits in discrete time, the key precursor is \cite{wang2026scaling}. They show that a heavy-tailed nearly unstable cumulative INAR($\infty$) process converges to a fractional CIR process by adapting the program of \cite{jaisson2016rough}. In particular, they consider kernels with heavy tail
\[
  \alpha_n\sim \frac{K}{n^{1+\alpha}},
  \qquad n\to\infty,
\]
for $\alpha\in(1/2,1)$, and obtain the volatility-side limit. However, their result is univariate: it identifies the rough variance dynamics, but it does not yet yield the joint price--variance model required for rough Heston option pricing.

This paper extends that discrete scaling picture from the univariate volatility setting to a bivariate price--variance framework. We study nearly unstable heavy-tailed bivariate cumulative INAR($\infty$) processes, prove that they converge to the full rough Heston model, and derive the associated microscopic-to-macroscopic parameter mapping. In this way, we obtain a discrete-time microstructural foundation for both volatility and price, complementing the continuous-time Hawkes-based viewpoint in \cite{el2018microstructural,horst2022microstructure}.

Beyond the scaling limit itself, we also study option pricing. Starting from the exact finite-$\tau$ transform recursion of the INAR approximation, we derive a second-order discrete Volterra reduction and compare it with the continuous fractional Riccati equation of rough Heston. This yields weak-error bounds for the \emph{truncated} Carr--Madan pricing functional on bounded frequency windows. The result is local in frequency rather than a full weak-error theorem for the entire European call price, because the high-frequency tail of the Fourier inversion is not analyzed in the present paper.

On the numerical side, simulation of rough Heston has been studied extensively. Existing approaches include Euler-type and strong approximations \cite{liang2017strong,RICHARD2021109,richard2023discrete}, multifactor approximations \cite{abi2019multifactor}, fast Monte Carlo and Fourier-based methods \cite{ma2022fast,callegaro2021fast,boyarchenko2025fast}, CTMC approximations \cite{yang2024}, weak simulation schemes tailored to rough Heston \cite{BayerBreneis2024WeakRoughHeston}, and related semimartingale approximations for control problems \cite{ma2023optimal}. On the weak-error side, several recent works treat rough-volatility models driven by explicit fractional-Brownian-motion inputs rather than rough Heston itself; see \cite{Gassiat2023WeakRoughVol,BayerHallTempone2021WeakLinearRoughVol,FrizSalkeldWagenhofer2024WeakRoughVol}. Our contribution is different in scope: we work from a discrete microstructural approximation and obtain a unified simulator that applies to both European and path-dependent contracts, and also to the classical limit.

The main contributions of the paper are as follows.
\begin{enumerate}
  \item \textit{A bivariate INAR microstructural limit for the rough Heston model.} We prove the joint convergence of a nearly unstable heavy-tailed bivariate cumulative INAR($\infty$) model to the rough Heston dynamics; see Theorems~\ref{theorem1} and \ref{maintheorem}. This yields a discrete-time microstructural foundation for both price and variance, together with an explicit mapping from the INAR parameters to the rough Heston coefficients and leverage correlation.

  \item \textit{A transform-based weak-error analysis on bounded frequency windows.} Starting from the exact finite-$\tau$ transform recursion, we derive a second-order discrete Volterra reduction and compare it quantitatively with the continuous fractional Riccati equation. This leads to weak-error bounds for the \emph{truncated} Carr--Madan pricing functional on bounded frequency windows of the form $C_1\tau^{-\alpha}+C_2(\alpha)\tau^{-(1-\alpha)}$ under an admissible-strip assumption and local-in-frequency control. Since $\alpha\in(1/2,1)$, the slower-decaying term is the $\tau^{-(1-\alpha)}$ branch. Its coefficient $C_2(\alpha)$ is inherited from the vanishing coefficients that appear in the weakly singular baseline approximation and in the discrete-to-continuous Volterra/resolvent comparison, and satisfies $C_2(\alpha)\to0$ as $\alpha\uparrow1^-$.

  \item \textit{FFT-accelerated simulation across rough and classical regimes.} We build a CDQ--FFT implementation of the INAR($\infty$) recursion with $\mathcal O(\tau\log^2\tau)$ cost per path. The same simulator architecture prices European and path-dependent contracts and also applies to the classical Heston case $\alpha=1$ when one uses the degenerate Markovian kernel from Remark~\ref{rem:alpha-one-kernel}.

  \item \textit{A unified numerical study across payoff classes.} We benchmark the scheme on European options, illustrate its use for Asian, lookback, and barrier contracts, compare implied-volatility slices with several recent rough Heston schemes, and examine the model-implied volatility surface.
\end{enumerate}

We also use the simulator to study the implied-volatility surface and compare the rough case $\alpha<1$ with the classical benchmark $\alpha=1$; see Figure~\ref{fig:iv_surface}. Under the parameter choices used in our experiments, the rough specification produces a steeper short-maturity at-the-money skew together with an approximately power-law-like decay, whereas the classical model yields a much flatter skew.

\paragraph{Roadmap}
Section~\ref{sec:INAR_to_rH} develops the discrete-time microstructural foundation: we introduce the bivariate cumulative INAR($\infty$) model, state the assumptions and parameterization, derive the renewal representation, and prove the scaling limit to the rough Heston model, together with the parameter and correlation mapping. Section~\ref{sec:weak-euro} develops the transform-based weak-error analysis for truncated Carr--Madan pricing on bounded frequency windows. Section~\ref{sec:numerical} presents the FFT-accelerated INAR simulator and the core numerical experiments. Section~\ref{sec:conclusion} concludes. Longer proofs and supporting technical results are collected in the appendices.

\section{From INAR\texorpdfstring{$(\infty)$}{(infty)} Processes to the Rough Heston Model}\label{sec:INAR_to_rH}


\subsection{A discrete-time microstructural route to the rough Heston model}
Let $\iota^{(1)}, \iota^{(2)}, \iota^{(3)}, \iota^{(4)}$ denote four positive $\ell^1$ sequences.
 We consider a sequence of bivariate cumulative INAR($\infty$) processes $(N^{\tau,+}, N^{\tau,-})$, indexed by a positive integer $\tau \in \{1, 2, 3, \dots\}$ that tends to infinity. For $n \in \mathbb{N}$, their intensity is defined as
\begin{align*}
    \begin{pmatrix}
        \lambda_n^{\tau,+} \\
        \lambda_n^{\tau,-}
    \end{pmatrix}
    := \hat{\mu}_{\tau}(n) 
    \begin{pmatrix}
        1 \\
        1
    \end{pmatrix}
    + a_{\tau} \sum_{s=1}^{n-1} 
    \begin{pmatrix}
        \iota_{n-s}^{(1)} & \iota_{n-s}^{(3)} \\
        \iota_{n-s}^{(2)} & \iota_{n-s}^{(4)}
    \end{pmatrix}
    \begin{pmatrix}
        X_s^{\tau,+} \\
        X_s^{\tau,-}
    \end{pmatrix},
\end{align*}
where $\hat{\mu}_{\tau}(n)$ is the time-dependent baseline intensity, to be specified in Definition \ref{def:params}, and $a_{\tau}\in(0,1)$ denotes the decay parameter. Based on this framework, we model the tick-by-tick transaction price $P_n^{\tau}$ as
\[
    P_n^{\tau} = N_n^{\tau,+} - N_n^{\tau,-}.
\]
Next, we present a set of assumptions for our discrete model, analogous to the continuous-time setup of \cite{el2019characteristic}.

\begin{assumption}[Model Specifications]\label{ass:main}
Our model for the sequence of bivariate INAR($\infty$) processes is based on the following set of assumptions, which are analogous to those of \cite{el2019characteristic} for the continuous-time Hawkes process.
\begin{enumerate}[(i)]
    \item \textit{(Stability and Near-Instability)}
    Let 
    \[
    A:=\begin{pmatrix}
    \|\iota^{(1)}\|_1 & \|\iota^{(3)}\|_1\\
    \|\iota^{(2)}\|_1 & \|\iota^{(4)}\|_1
    \end{pmatrix},
    \quad\text{and let } \mathcal S(A) \text{ denote its spectral radius.}
    \]
    The coefficients $\iota^{(i)}$ are independent of $\tau$. We fix the normalization
    $\mathcal S(A)=1$.
    Then, for each $\tau$, $\mathcal S(a_\tau A)=a_\tau<1$.
    For the asymptotic analysis we work in the near-unstable regime, 
    \[
    a_\tau \uparrow 1,\qquad 1-a_\tau= \gamma\tau^{-\alpha},\qquad 
    \mu_\tau=\mu\tau^{\alpha-1},
    \]
    as $\tau\rightarrow\infty$, where $\alpha\in(\frac12,1),\ \gamma>0$.

    \item \textit{(No Statistical Arbitrage)} The influence kernels are balanced \emph{componentwise} so that past trades do not induce a predictable drift asymmetry between buys and sells. Precisely,
    \[
        \iota^{(1)}_n+\iota^{(3)}_n=\iota^{(2)}_n+\iota^{(4)}_n\quad\text{for all }n\ge1.
    \]
    Under the parameterization in Definition~\ref{def:params}, this is ensured by the choice of the constant matrix $\chi$ whose two rows coincide, which in turn yields $\lambda^{\tau,+}_s\equiv \lambda^{\tau,-}_s$.

    \item \textit{(Liquidity Asymmetry)} The cross-impact of trades is asymmetric. We assume that a past sell order has a stronger influence on future buy orders than a past buy order has on future sell orders:
    \[
        \iota^{(3)}=\beta \iota^{(2)}, \quad \text{for some constant } \beta>1.
    \]
    This implies $\iota^{(4)}=\iota^{(1)}+(\beta-1)\iota^{(2)}$.

    \item \textit{(Heavy-Tailed Kernels)} The influence of past trades decays slowly, following a power law. For some positive constant $C$, we assume:
    \[
        n^\alpha \sum_{s=n}^\infty \left( \iota_s^{(1)}+\beta\iota_s^{(2)} \right)\rightarrow C,\ \text{as}\ n\rightarrow \infty.
    \]
\end{enumerate}
\end{assumption}

\begin{remark}[Intuition behind the assumptions.]
\begin{itemize}
    \item \textit{Assumption~\ref{ass:main}(i)} places the model in the critical regime where cumulative self-excitation has a nondegenerate continuous-time limit.
    \item \textit{Assumption~\ref{ass:main}(ii)} rules out a predictable directional drift in the order flow.
    \item \textit{Assumption~\ref{ass:main}(iii)} is the microscopic source of the negative price-volatility correlation in the limit.
    \item \textit{Assumption~\ref{ass:main}(iv)} creates the long memory that produces rough volatility with Hurst parameter $H=\alpha-\frac12<\frac12$.
\end{itemize}
\end{remark}

Assumption~\ref{ass:main} records the structural conditions that encode the microstructural interpretation of the bivariate model. For the scaling-limit and pricing analysis below, we now work with a tractable separable one-factor specialization of that setup. This parameterization is chosen so that the balanced-impact, asymmetry, near-instability, and heavy-tail requirements are built in explicitly, closely following the framework of \cite{el2019characteristic}.

\begin{definition}[Model Parameterization]\label{def:params}
Let $\beta>1$, $1/2<\alpha<1$, $\gamma>0$, $\xi_0>0$ and $\mu>0$ be constants. The parameters of the INAR($\infty$) process sequence are specified as follows for each integer $\tau \in \{1, 2, 3, \dots\}$:
\begin{itemize}
    \item \textit{Decay and Intensity Parameters:} The decay parameter $a_\tau$ and the exogenous order intensity $\mu_\tau$ are set to:
    \begin{equation}\label{a:tau:mu:tau}
        a_\tau:=1-\gamma \tau^{-\alpha}, \quad \text{and} \quad \mu_{\tau}:=\mu\tau^{\alpha-1}.
    \end{equation}
    
    \item \textit{Influence Kernels:} The influence matrix kernel $K_n^\tau$ is given by the product of a scalar kernel $\varphi_n^\tau$ and a constant matrix $\chi$:
    \[
        K_{n}^{\tau}:=\varphi_n^{\tau} \chi, \quad \text{where} \quad \chi:=\frac1{\beta+1}\left(
        \begin{matrix}
          1 & \beta\\
          1 & \beta
        \end{matrix}
        \right).
    \]
    The scalar kernel $\varphi_n^\tau$ is a rescaled version of a base kernel $\varphi_n$:
    \[
        \varphi_n^{\tau}:=a_{\tau}\varphi_n,
    \]
    where the base kernel $(\varphi_n)_{n\ge 1}$ is defined as:
    \[
      \varphi_n:=
      \begin{cases}
        1-\frac1{\Gamma(1-\alpha)}, &n=1,\\
        \frac1{\Gamma(1-\alpha)}\left( \frac1{(n-1)^\alpha}-\frac1{n^\alpha}\right), &n\ge2.
      \end{cases}
    \]
    
    \item \textit{Baseline Intensity:} The time-dependent baseline intensity $\hat\mu_{\tau}(n)$ is defined as:
    \[
      \hat \mu_{\tau}(n):=\mu_{\tau}+\xi_0 \mu_{\tau}\left( \frac1{1-a_{\tau}}\left( 1-\sum_{s=1}^{n-1}\varphi_s^{\tau} \right)-\sum_{s=1}^{n-1}\varphi_s^{\tau} \right).
    \]
\end{itemize}
\end{definition}

\begin{remark}[Classical case $\alpha=1$]\label{rem:alpha-one-kernel}
Definition~\ref{def:params} is used for the rough regime $\alpha\in(1/2,1)$ and is not meant to be interpreted literally at $\alpha=1$. For the classical Heston benchmark in Section~\ref{sec:numerical}, we instead use the discrete Markovian kernel $\varphi_1:=1$ and $\varphi_n:=0$ for $n\ge2$, so that $\varphi_n^\tau=a_\tau\varphi_n$ and only the lag-one term remains in the recursion. This is the discrete kernel used throughout the classical-limit numerics.
\end{remark}

\begin{remark}  
  This baseline intensity follows \cite[Definition~2.1]{el2019characteristic}, but with a discrete kernel $(\varphi_n)_{n\ge1}$ chosen so that $\sum_{n=1}^\infty \varphi_n=1$ and
  \[
    n^\alpha \sum_{k=n+1}^\infty \varphi_k \to \frac{1}{\Gamma(1-\alpha)}
    \quad \text{as} \quad n \to \infty.
  \]
\end{remark}

To analyze the limit, we first express the intensity $\lambda_n^{\tau,+}$ as the solution to a discrete renewal-type equation. We start from the original definition of the intensity vector $\boldsymbol{\lambda}_n^{\tau}$:
\begin{align*}
    \begin{pmatrix}
        \lambda_n^{\tau,+} \\
        \lambda_n^{\tau,-}
    \end{pmatrix}
    = \hat{\mu}_{\tau}(n) 
    \begin{pmatrix}
        1 \\
        1
    \end{pmatrix}
    + \sum_{s=1}^{n-1} K_{n-s}^{\tau}
    \begin{pmatrix}
        X_s^{\tau,+} \\
        X_s^{\tau,-}
    \end{pmatrix} 
    = \hat{\mu}_{\tau}(n) 
    \begin{pmatrix}
        1 \\
        1
    \end{pmatrix}
    + \sum_{s=1}^{n-1} \varphi_{n-s}^{\tau} \chi
    \begin{pmatrix}
        X_s^{\tau,+} \\
        X_s^{\tau,-}
    \end{pmatrix},
\end{align*}
where we used the parameterization from Definition~\ref{def:params}. Let us focus on the first component, $\lambda_n^{\tau,+}$. Substituting the form of matrix $\chi$, we get:
\[
  \lambda_n^{\tau,+} = \hat{\mu}_{\tau}(n) + \frac{1}{1+\beta} \sum_{s=1}^{n-1} \varphi_{n-s}^{\tau} \left(X_s^{\tau,+} + \beta X_s^{\tau,-}\right).
\]
Now, let $\mathbf M_n^{\tau}=(M_n^{\tau,+},M_n^{\tau,-})^\top=\mathbf N_n^{\tau}-\sum_{s=1}^{n}\boldsymbol\lambda_s^{\tau}$ be the martingale associated with the counting process $\mathbf N_n^{\tau}$. Thus, $X_s^{\tau,\pm} = \lambda_s^{\tau,\pm} + (M_s^{\tau,\pm} - M_{s-1}^{\tau,\pm})$. Substituting this back into the expression for $\lambda_n^{\tau,+}$, and noting that $\lambda_s^{\tau,+} = \lambda_s^{\tau,-}$ (due to the structure of $\chi$), we obtain a discrete renewal equation for $\lambda_n^{\tau,+}$:
\begin{align*}
  \lambda_n^{\tau,+} 
  &= \hat{\mu}_{\tau}(n) 
  \\
  &\quad+ \frac{1}{1+\beta} \sum_{s=1}^{n-1} \varphi_{n-s}^{\tau} \left( \lambda_s^{\tau,+} + \left(M_s^{\tau,+} - M_{s-1}^{\tau,+}\right) + \beta \lambda_s^{\tau,-} + \beta\left(M_s^{\tau,-} - M_{s-1}^{\tau,-}\right) \right) \\
  &= \hat{\mu}_{\tau}(n) + \sum_{s=1}^{n-1} \varphi_{n-s}^{\tau} \lambda_s^{\tau,+} \\
  &\qquad+ \frac{1}{1+\beta} \sum_{s=1}^{n-1} \varphi_{n-s}^{\tau} \left( \left(M_s^{\tau,+} - M_{s-1}^{\tau,+}\right) + \beta\left(M_s^{\tau,-} - M_{s-1}^{\tau,-}\right) \right).
\end{align*}
This is a discrete renewal equation. Its solution can be expressed using the discrete renewal kernel $\psi^{\tau}$, which is defined as $\psi^{\tau} = \sum_{k=1}^\infty (\varphi^{\tau})^{*k}$, where $*$ denotes the discrete convolution operation. Since $\left\|\varphi^{\tau}\right\|_1 = a_\tau < 1$, the series converges and $\psi^\tau$ is well-defined. As established in Lemma~9 of \cite{wang2026scaling}, the solution is given by:
\begin{equation}\label{eq:lambda-decomp}
  \lambda_n^{\tau,+} = \hat{\mu}_\tau(n) + \sum_{s=1}^{n-1} \psi_{n-s}^\tau \hat{\mu}_\tau(s) + \frac{1}{1 + \beta} \sum_{s=1}^{n-1} \psi_{n-s}^\tau \left( M_s^{\tau,+} - M_{s-1}^{\tau,+} + \beta M_s^{\tau,-} - \beta M_{s-1}^{\tau,-} \right).
\end{equation}

\subsection{The rough limits of cumulative INAR\texorpdfstring{$(\infty)$}{(infty)} processes}
First, we consider how to choose the right scaling constant. We have chosen
in \eqref{a:tau:mu:tau} that
$\mu_{\tau}=\mu \tau^{\alpha-1}$,
where $\mu$ is some positive constant. To obtain a nondegenerate limit, similar to \cite{wang2026scaling}, fix $T>0$ and for $t\in[0,T]$ consider $(1-a_{\tau})\lambda_{\lfloor t\tau\rfloor}^{\tau,+}/\mu_{\tau}$, and define the renormalized intensity
\[
  C_t^{\tau}:=\frac{1-a_{\tau}}{\mu_{\tau}}\lambda_{\lfloor t\tau\rfloor}^{\tau,+}.
\]
After some calculations, this can be re-written as
\begin{equation}\label{eq:Ctau}
\begin{aligned}
  C_t^{\tau}
  &=\frac{1-a_{\tau}}{\mu_{\tau}}\hat\mu_{\tau}(\lfloor t\tau \rfloor)
   +(1-a_{\tau})\sum_{s=1}^{\lfloor t\tau \rfloor-1}\psi_{\lfloor t\tau \rfloor-s}^{\tau}\frac{\hat\mu_{\tau}(s)}{\mu_{\tau}}
   \nonumber
   \\
   &\qquad+ \kappa \tau(1-a_{\tau}) \sum_{s=1}^{\lfloor t\tau \rfloor-1}\psi_{\lfloor t\tau \rfloor-s}^{\tau}\sqrt{C_{s/\tau}^{\tau}}
  \left( B_{(s+1)/\tau}^{\tau}-B_{s/\tau}^{\tau} \right),
\end{aligned}
\end{equation}
where 
\begin{equation}
    \kappa:=\sqrt{\frac{1+\beta^2}{\gamma\mu(1+\beta)^2}},
\end{equation}
and for any $t\in[0,T]$,
\begin{equation}\label{eq:inar-mart}
B_t^{\tau}:=\frac1{\sqrt{\tau}}\sum_{s=1}^{\lfloor t\tau \rfloor-1}\frac{M_s^{\tau,+}-M_{s-1}^{\tau,+}+\beta(M_s^{\tau,-}-M_{s-1}^{\tau,-})}{\sqrt{\lambda_s^{\tau,+}+\beta^2\lambda_s^{\tau,-}}},
\end{equation}
which will converge weakly to a standard Brownian motion $B$ as $\tau\rightarrow\infty$ by the martingale central limit theorem; see \cite{wang2026scaling}. 
We first make the vector notation explicit. 
For $n\ge 0$, let 
\[
\mathbf N_n^{\tau}:=\begin{pmatrix}N_n^{\tau,+}\\ N_n^{\tau,-}\end{pmatrix}
\quad\text{and}\quad
\boldsymbol\lambda_n^{\tau}:=\mathbb E\left[\left.\mathbf N_n^{\tau}-\mathbf N_{n-1}^{\tau}\right|\mathcal F_{n-1}\right]
=\begin{pmatrix}\lambda_n^{\tau,+}\\ \lambda_n^{\tau,-}\end{pmatrix},
\]
where $(\mathcal F_n)_{n\ge0}$ is the natural filtration, 
$\mathbf N_n^{\tau}$ is the bivariate counting process up to step $n$, and 
$\boldsymbol\lambda_n^{\tau}$ is its (predictable) intensity vector. 
With this notation, fixing $T>0$, for $t\in[0,T]$, we define the rescaled cumulative counts, cumulative intensities, 
and the associated normalized martingale as
\[
    \mathbf Y_t^{\tau}
    :=\frac{1-a_{\tau}}{\tau^\alpha \mu}\mathbf N_{\lfloor t\tau \rfloor}^{\tau},\qquad
    \boldsymbol\Lambda_t^{\tau}
    :=\frac{1-a_{\tau}}{\tau^\alpha \mu}\sum_{s=1}^{\lfloor t\tau \rfloor}\boldsymbol\lambda_s^{\tau},\qquad 
    \mathbf Z_t^{\tau}
    :=\sqrt{\frac{\tau^\alpha\mu}{1-a_{\tau}}}\left(\mathbf Y_t^{\tau}-\boldsymbol\Lambda_t^{\tau}\right).
\]

A key technical input is that the rescaled renewal kernel associated with $\psi^\tau$ converges weakly to the Mittag--Leffler resolvent density
\[
  f_{\alpha,\gamma}(t)=\gamma t^{\alpha-1}E_{\alpha,\alpha}(-\gamma t^\alpha);
\]
see Appendix~\ref{supp:scaling-proofs}.

\begin{proposition}\label{proposition1}
  Fix $T>0$. Under the parameterization in Definition~\ref{def:params}, the sequence
  $(\boldsymbol\Lambda^{\tau},\mathbf Y^{\tau},\mathbf Z^{\tau})$ is $C$-tight on $[0,T]$.
  Moreover, if $(\mathbf Y,\mathbf Z)$ is a limit point of
  $(\mathbf Y^{\tau},\mathbf Z^{\tau})$, then $\mathbf Z$ is a continuous martingale with
  \[
    [\mathbf Z,\mathbf Z]=\mathrm{diag}(\mathbf Y),
  \]
  that is,
$[Z^+,Z^+]_t=Y_t^+$,
$[Z^-,Z^-]_t=Y_t^-$
and $[Z^+,Z^-]_t=0$, for any $t\in[0,T]$.
\end{proposition}

\begin{proof}
{$C$-tightness of $(\boldsymbol\Lambda^\tau,\mathbf Y^\tau)$ is verified by applying Lemma~\ref{xu2023} componentwise, using the intensity-moment bounds $\mathbb{E}[\lambda_n^{\tau,+}]\lesssim\tau^{2\alpha-1}$ and $\mathbb{E}[(\lambda_n^{\tau,+})^2]\lesssim\tau^{4\alpha-2}$. For $\mathbf{Z}^\tau$, an analogous argument with the martingale increments establishes the required oscillation estimate.
The diagonal quadratic covariation $[Z^{\tau,+},Z^{\tau,+}]=Y^{\tau,+}$ follows from the conditional Poisson variance $\mathrm{Var}(X_s^{\tau,+}\mid\mathcal{F}_{s-1})=\lambda_s^{\tau,+}$, and the vanishing cross-variation $[Z^{\tau,+},Z^{\tau,-}]\to0$ follows from the independence of the two Poisson components.} Full details are in Appendix~\ref{supp:scaling-proofs}.
\end{proof}

Since the components of $(\mathbf Y^{\tau})$ and $(\boldsymbol\Lambda^{\tau})$ are pure jump processes, the classical Kolmogorov--Chentsov criterion is not applicable. We instead use a $C$-tightness criterion for c\`adl\`ag processes from \cite[Lemma 3.5]{horst2023convergence}; its precise statement and the verification of its hypotheses in the present setting are given in Appendix~\ref{supp:scaling-proofs}.

We apply this criterion on a fixed horizon $[0,T]$ to each component of our scaled processes; the required moment estimates are deferred to Appendix~\ref{supp:scaling-proofs}.

Before we prove the convergence of $Y^\tau$ and $Z^\tau$, let us first prove the following lemma.

\begin{lemma}\label{lem:tightness}
  Fix $T>0$. Then
  \[
  \sup_{t \in [0,T]} \left\|\boldsymbol{\Lambda}_t^\tau - \mathbf{Y}_t^\tau\right\|_1 \rightarrow 0 \text{ in probability},
  \]
as $\tau \rightarrow \infty$, where $\|\cdot\|_1$ denotes the $\ell^1$-norm on $\mathbb{R}^2$.
\end{lemma}

\begin{proof}
{Because $\Lambda_t^{\tau,+}=\Lambda_t^{\tau,-}$ holds exactly (both equal the same renewal convolution), the $\ell^1$-difference $\|\boldsymbol\Lambda_t^\tau-\mathbf{Y}_t^\tau\|_1$ equals $2|Y_t^{\tau,+}-\Lambda_t^{\tau,+}|$. The compensator--process gap satisfies $|Y_t^{\tau,+}-\Lambda_t^{\tau,+}|=(1-a_\tau)/(\tau^\alpha\mu)\cdot|M^{\tau,+}_{\lfloor t\tau\rfloor}|$, and the martingale maximal inequality together with $\mathbb{E}[(\lambda_n^{\tau,+})^2]\lesssim\tau^{4\alpha-2}$ gives $\sup_t\mathbb{E}[|M^{\tau,+}_{\lfloor t\tau\rfloor}|^2]\lesssim\tau^{4\alpha-1}$. Multiplying by $(1-a_\tau)^2/(\tau^\alpha\mu)^2\asymp\tau^{-4\alpha}$ yields $\sup_t|Y_t^{\tau,+}-\Lambda_t^{\tau,+}|\to0$ in probability.} Full details are in Appendix~\ref{supp:scaling-proofs}.
\end{proof}

Lemma~\ref{lem:tightness} implies the uniform convergence to zero in probability of $Y^{\tau,+} - \Lambda^{\tau,+}$.

Next, by noting that
$\Lambda_t^{\tau,+} = \Lambda_t^{\tau,-}$,
we obtain
\[
  \sup_{t \in [0,T]} \left|Y_t^{\tau,+} - Y_t^{\tau,-}\right| \rightarrow 0,
\]
as $\tau \rightarrow \infty$. Therefore, if a subsequence of $Y^{\tau,+}$ converges to some $Y$, then the corresponding subsequence of $Y^{\tau,-}$ converges to the same $Y$. We thus have the following proposition regarding the limit points of $Y^{\tau,+}$ and $Y^{\tau,-}$.

\begin{proposition}\label{prop2}
  If $(Y,Y,Z^+,Z^-)$ is a limit point for $(Y^{\tau,+},Y^{\tau,-},Z^{\tau,+},Z^{\tau,-})$, then this limit point admits a representation of the form
  \[
    Y_t=\int_0^t v_sds,\qquad Z_t^+=\int_0^t\sqrt{v_s}dB_s^1,\qquad Z_t^-=\int_0^t\sqrt{v_s}dB_s^2,
  \]
  for some two-dimensional Brownian motion $(B^1,B^2)$ with independent components and some rate process $v=(v_t)_{t\ge 0}$. In particular, the corresponding rate process $v$ satisfies
  \begin{equation}\label{finalequation1}
    v_t
    = \xi_0\left(1-F_{\alpha,\gamma}(t)\right)+F_{\alpha,\gamma}(t)
    + \sqrt{\frac{1+\beta^2}{\gamma\mu(1+\beta)^2}}
      \int_0^t f_{\alpha,\gamma}(t-s)\sqrt{v_s}\,dW_s^V,
  \end{equation}
  where
    \[
    W^V:=\frac{B^1+\beta B^2}{\sqrt{1+\beta^2}},
  \]
  and for $t>0$,
    \[
    f_{\alpha,\gamma}(t):=\gamma t^{\alpha-1}E_{\alpha,\alpha}(-\gamma t^\alpha),
    \qquad
    F_{\alpha,\gamma}(t):=\int_0^t f_{\alpha,\gamma}(u)\,du
    =1-E_{\alpha}(-\gamma t^\alpha),
  \]
  with the convention $F_{\alpha,\gamma}(0)=0$. Here $E_{\alpha,\beta}(z)=\sum_{k=0}^\infty \frac{z^k}{\Gamma(\alpha k+\beta)}$ is the two-parameter Mittag--Leffler function (and $E_\alpha=E_{\alpha,1}$). Equivalently, $\mathcal{L}\{f_{\alpha,\gamma}\}(z)=\frac{\gamma}{\gamma+z^\alpha}$ for $z>0$.
  Furthermore, for any $\epsilon>0$, the process $v$ has H\"older regularity $\alpha-\frac12-\epsilon$.
\end{proposition}

\begin{proof}
{Fix a convergent subsequence $(\mathbf{Y}^{\tau_k},\mathbf{Z}^{\tau_k})\to(\mathbf{Y},\mathbf{Z})$. By Lemma~\ref{lem:tightness} the limit satisfies $Y^+=Y^-=:Y$, and by Proposition~\ref{proposition1} the limiting martingale $\mathbf{Z}$ has diagonal quadratic variation. The martingale representation theorem then gives the Brownian form $(Z^+,Z^-)=(\int\sqrt{v_s}\,dB^1,\int\sqrt{v_s}\,dB^2)$. The discrete renewal sum in \eqref{eq:Ctau} is then passed to the limit: by the weak kernel convergence of Lemma~\ref{lem:kernel_convergence}, $\frac{1-a_\tau}{\tau}\sum_s\psi_{\lfloor t\tau\rfloor-s}^\tau(\cdot)$ converges to $\int_0^t f_{\alpha,\gamma}(t-s)(\cdot)\,ds$, yielding the Volterra equation \eqref{finalequation1}.} Full details are in Appendix~\ref{supp:scaling-proofs}.
\end{proof}

We then state the main result of the paper. 

\begin{theorem}[Scaling limit for cumulative intensities and martingale fluctuations]\label{theorem1}
  Fix $T>0$. Under Assumption~\ref{ass:main} and the model parameterization specified in Definition~\ref{def:params}, as $\tau\rightarrow \infty$, the sequence of processes $(\boldsymbol\Lambda_t^{\tau},\mathbf Y_t^{\tau},\mathbf Z_t^{\tau})_{t\in[0,T]}$ converges in law under the Skorokhod topology on $D([0,T],\mathbb{R}^6)$ to $(\boldsymbol\Lambda,\mathbf Y,\mathbf Z)$, where the limit process is described by a rate process $v = (v_t)_{t\ge 0}$ as follows:
  \[
    \boldsymbol\Lambda_t=\mathbf Y_t=\int_0^t v_s ds\left(  \begin{matrix}
     1\\1
    \end{matrix}\right), \quad \text{and} \quad \mathbf Z_t=\int_0^t\sqrt{v_s}\left(\begin{matrix}
    dB_s^1\\dB_s^2
    \end{matrix}\right),
  \]
  and the rate process $v$ is the unique solution of the following stochastic Volterra equation:
  \begin{equation}\label{eq:vt}
    v_t=\xi_0+\frac1{\Gamma(\alpha)}\int_0^t(t-s)^{\alpha-1}\gamma(1-v_s)ds+\gamma \sqrt{\frac{1+\beta^2}{\gamma \mu(1+\beta)^2}}\frac1{\Gamma(\alpha)}\int_0^t(t-s)^{\alpha-1}\sqrt{v_s}dW_s^V,
  \end{equation}
  where $W^V:=\frac{B^1+\beta B^2}{\sqrt{1+\beta^2}}$. Furthermore, for any $\epsilon>0$, the process $v$ has H\"older regularity $\alpha-1/2-\epsilon$. 
\end{theorem}

\begin{proof}
{The proof proceeds in three stages; full details are in Appendix~\ref{supp:scaling-proofs}.

\medskip
\noindent\textit{Stage 1: $C$-tightness and martingale structure (Proposition~\ref{proposition1}).}
We apply the $C$-tightness criterion of Lemma~\ref{xu2023} componentwise to $(Y^{\tau,+}, Y^{\tau,-}, Z^{\tau,+}, Z^{\tau,-})$. The key moment estimates are $\mathbb{E}[\lambda_n^{\tau,+}]\lesssim\tau^{2\alpha-1}$ and $\mathbb{E}[(\lambda_n^{\tau,+})^2]\lesssim\tau^{4\alpha-2}$, obtained from the renewal decomposition \eqref{eq:lambda-decomp} together with the pointwise resolvent bound $\psi_j^\tau\le C_T j^{\alpha-1}$ (Lemma~\ref{lem:psi-pointwise}). These estimates yield the required $h^{b_i}/\tau^{a_i}$ bound with $\varrho_*>1$. Condition~(i) of Lemma~\ref{xu2023} (small jumps) follows from the rescaling factor $(1-a_\tau)/(\tau^\alpha\mu)\to0$. The limiting martingale $\mathbf{Z}$ inherits the diagonal quadratic variation $[Z^+,Z^+]_t=Y_t^+$, $[Z^-,Z^-]_t=Y_t^-$, $[Z^+,Z^-]_t=0$.

\medskip
\noindent\textit{Stage 2: Characterization of limit points (Proposition~\ref{prop2}).}
From Lemma~\ref{lem:tightness}, the two intensity components $\Lambda^{\tau,+}$ and $\Lambda^{\tau,-}$ collapse uniformly, so every limit point takes the form $(Y, Y, Z^+, Z^-)$. The martingale representation theorem, applied to the diagonal quadratic variation from Stage~1, yields $Z^+_t=\int_0^t\sqrt{v_s}\,dB^1_s$ and $Z^-_t=\int_0^t\sqrt{v_s}\,dB^2_s$ for independent Brownian motions $(B^1,B^2)$. Passing the discrete renewal decomposition to the limit via the weak kernel convergence of Lemma~\ref{lem:kernel_convergence} — specifically, $\mathsf{S}^\tau(t)\,dt\Rightarrow f_{\alpha,\gamma}(t)\,dt$ — converts the discrete convolution sum into the fractional Volterra integral, yielding \eqref{finalequation1}.

\medskip
\noindent\textit{Stage 3: Uniqueness and convergence.}
Standard pathwise Gronwall estimates for stochastic Volterra equations of the form \eqref{eq:vt} yield uniqueness of the solution $v$; see Appendix~\ref{supp:scaling-proofs}. Since all limit points satisfy the same equation and the solution is unique, the whole sequence converges.}
\end{proof}

\subsection{Parameter mapping and correlation for the macroscopic limit}

Throughout the limit, we define the Brownian motions driving price and variance via the orthogonal decomposition
\begin{align}
W_t^X := \frac{1}{\sqrt{2}}\left(B_t^1-B_t^2\right),
\qquad
W_t^V := \frac{B_t^1+\beta B_t^2}{\sqrt{1+\beta^2}},\label{eq:wtxv}
\end{align}
with $(B^1,B^2)$ a two-dimensional standard Brownian motion with independent components. A direct calculation gives the correlation
\begin{equation}\label{eq:rho-def}
  d\left\langle W^X,W^V\right\rangle_t
  =\frac{1-\beta}{\sqrt{2(1+\beta^2)}}dt
  =:\rho dt,
\end{equation}
where $\rho\in\left(-\frac1{\sqrt2},0\right)$ is consistent with the leverage effect induced by the liquidity asymmetry $\beta>1$.

Moreover, moving from the rate process $v$ (for intensities) to the variance process
\[
  V_t := \theta v_t,
\]
rescales the drift to $\gamma(\theta-V_t)$ and the diffusion coefficient accordingly. Matching the diffusion scale obtained from the INAR martingale CLT yields the identity
\begin{equation}\label{eq:mu-nu-map}
  \nu = \sqrt{\frac{\theta(1+\beta^2)}{\gamma\mu(1+\beta)^2}}
  \qquad\Longleftrightarrow\qquad
  \mu = \frac{\theta(1+\beta^2)}{\gamma\nu^2(1+\beta)^2}.
\end{equation}
We will henceforth \emph{define} $\nu$ via \eqref{eq:mu-nu-map}, so that, with $V=\theta v$ and the correlation $\rho$ in \eqref{eq:rho-def}, the limiting variance dynamics takes the rough Heston form in \eqref{eq:V}.

Then we can also build microscopic processes converging to the log-price. 
For $\theta>0$, fixing $T>0$, and $t\in[0,T]$, define the (already properly rescaled) microscopic log-price process $P^\tau=(P_t^\tau)_{t\in[0,T]}$ as
\[
  P_t^{\tau}
  :=\sqrt{\frac{\theta}{2}}\sqrt{\frac{1-a_{\tau}}{\mu \tau^\alpha}}
    \left( N_{\lfloor t \tau \rfloor}^{\tau,+}-N_{\lfloor t \tau \rfloor}^{\tau,-} \right)
   -\frac{\theta}{2}\frac{1-a_{\tau}}{\mu \tau^\alpha}
    N_{\lfloor t \tau \rfloor}^{\tau,+}
  =\sqrt{\frac{\theta}{2}}\left(Z_{t}^{\tau,+}-Z_{t}^{\tau,-}\right)
   -\frac{\theta}{2}Y_{t}^{\tau,+}.
\]

From Theorem~\ref{theorem1}, $(P^{\tau})_{t\in[0,T]}$ converges in law (in the Skorokhod topology on $D([0,T])$) to the macroscopic price process $(P_t)_{t\in[0,T]}$ given by
\[
 P_t := \sqrt{\frac\theta2}\int_0^t\sqrt{v_s}\left(dB_s^1-dB_s^2\right)-\frac\theta2\int_0^t v_sds.
\]
Let $V_t:=\theta v_t$ and $W_t^X:=\frac{1}{\sqrt{2}}(B_t^1-B_t^2)$. 
It follows from the equation for $v_t$ in \eqref{eq:vt} that $V_t$ represents the variance process. 
Then we have the following theorem.

\begin{theorem}[Price-process consequence: rough Heston limit]\label{maintheorem}
  Fix $T>0$. Under the same conditions as Theorem \ref{theorem1}, as $\tau\to\infty$, the rescaled price processes $(P_t^{\tau})_{t\in[0,T]}$ converge in law (Skorokhod topology on $D([0,T])$) to
  \begin{equation}\label{eq:P}
    P_t=\int_0^t\sqrt{V_s}dW_s^X-\frac12\int_0^tV_sds,
  \end{equation}
  where $V_t=\theta v_t$ is the unique solution of the rough Volterra equation:
  \begin{equation}\label{eq:V}
    V_t=\theta\xi_0+\frac{\gamma}{\Gamma(\alpha)}\int_0^t (t-s)^{\alpha-1}\left(\theta-V_s\right)ds
    +\frac{\gamma\nu}{\Gamma(\alpha)}\int_0^t (t-s)^{\alpha-1}\sqrt{V_s}dW_s^V,
  \end{equation}
  where $\nu$ is defined from the INAR parameters via \eqref{eq:mu-nu-map} and $\left(W^X,W^V\right)$ is a two-dimensional Brownian motion defined in \eqref{eq:wtxv} whose correlation is
\begin{equation}\label{correlation}
    d\left\langle W^X,W^V\right\rangle_t=\rho dt,\qquad
    \rho=\frac{1-\beta}{\sqrt{2(1+\beta^2)}}.
  \end{equation}
\end{theorem}

\begin{proof}
{The rescaled price $P_t^\tau=\sqrt{\theta/2}(Z_t^{\tau,+}-Z_t^{\tau,-})-(\theta/2)Y_t^{\tau,+}$ is a continuous linear functional of $(Y^{\tau,+},Z^{\tau,+},Z^{\tau,-})$. Theorem~\ref{theorem1} gives joint convergence of these three processes, so the continuous-mapping theorem yields $P^\tau\to P$ in the Skorokhod topology. Substituting the limit $Z^+_t-Z^-_t=\int_0^t\sqrt{v_s}(dB^1_s-dB^2_s)$ and identifying $W^X=\frac{1}{\sqrt{2}}(B^1-B^2)$, together with the parameter maps \eqref{eq:rho-def} and \eqref{eq:mu-nu-map}, converts the limit into the rough Heston form \eqref{eq:P}--\eqref{eq:V}.} Full details are in Appendix~\ref{supp:scaling-proofs}.
\end{proof}

\begin{remark}
   From \eqref{correlation} and the condition that $\beta > 1$, the correlation coefficient $\rho$ between the two Brownian motions must lie in the interval $\left(-\frac{1}{\sqrt{2}}, 0\right)$, which is consistent with \cite{el2019characteristic}.
\end{remark}


\section{Weak-Error Bounds for Truncated Carr--Madan Pricing}\label{sec:weak-euro}

In this section we analyze the weak error for the truncated Carr--Madan pricing functional associated with the finite-$\tau$ INAR($\infty$) approximation. The logic of this section is as follows. We start from the exact finite-$\tau$ transform recursion, which takes the form of an exponential discrete Volterra recursion. On bounded vertical strips, this exact recursion can be reduced by a second-order expansion to a quadratic discrete Volterra equation. After a deterministic rescaling, the reduced recursion matches the coefficient structure of the rough Heston fractional Riccati equation. We then invert the discrete renewal operator and compare the resulting cumulative resolvents in discrete and continuous time. This resolvent comparison is the key technical step: once it is established, it transfers to the Riccati variables, then to the logarithmic transforms, then to the transforms themselves, and finally to the truncated Carr--Madan pricing functional on bounded frequency windows.

The slower branch $\tau^{-(1-\alpha)}$ has a specific analytic origin that we make explicit here. The first source is the weighted baseline-sum approximation, which isolates the weakly singular quadrature defect coming from the baseline term:

\begin{proposition}[Weighted baseline-sum approximation]\label{prop:baseline-riemann}
Fix $T>0$ and let $N_\tau:=\lfloor\tau T\rfloor$. With
\[
  \varepsilon_\tau:=1-a_\tau=\gamma\tau^{-\alpha},
\]
for every $f\in C^\alpha([0,T])$, there exists a constant $C_T>0$, depending
only on the fixed model parameters and on $T$, but independent of $\tau$ and
$f$, such that
\begin{equation}\label{eq:baseline-riemann}
\begin{aligned}
  &\left|
    \frac{\varepsilon_\tau}{\mu}
    \sum_{m=1}^{N_\tau}\hat\mu_\tau(m)f\!\left(T-\frac{m}{\tau}\right)
    -
    \gamma\int_0^T f(s)\,ds
    -
    \xi_0 I^{1-\alpha}f(T)
  \right|
  \\
  &\le
  C_T\|f\|_{C^\alpha([0,T])}\tau^{-\alpha}
  +
  C_{\mathrm{base}}(\alpha)\|f\|_{C^\alpha([0,T])}\tau^{-(1-\alpha)},
\end{aligned}
\end{equation}
where
\[
  I^{1-\alpha}f(T)
  :=
  \frac{1}{\Gamma(1-\alpha)}
  \int_0^T(T-s)^{-\alpha}f(s)\,ds,
\]
and the coefficient $C_{\mathrm{base}}(\alpha)\ge0$ satisfies
$C_{\mathrm{base}}(\alpha)\to0$ as $\alpha\uparrow1^-$.
\end{proposition}

\begin{proof}
See Appendix~\ref{supp:weak-error-proofs}.
\end{proof}

The second source is the following small-$s$ expansion of the kernel transform,
which is the direct analytic input to the resolvent comparison in
Section~\ref{sec:renewal-comparison}.

\begin{lemma}[Small-$s$ expansion of the kernel transform]\label{lem:phi-expansion}
For the kernel $(\varphi_n)_{n\ge1}$ from Definition~\ref{def:params}, let $\hat\varphi(s):=\sum_{n\ge1}\varphi_n e^{-sn}$ for $s>0$. Then, as $s\downarrow0$,
\begin{equation}\label{eq:phi-expansion-main}
  1-\hat\varphi(s)=s^\alpha+c_1(\alpha)s+\mathcal O(s^{1+\alpha}),
\end{equation}
where
\begin{equation}\label{eq:c1-alpha-main}
  c_1(\alpha):=1+\frac{\zeta(\alpha)}{\Gamma(1-\alpha)}.
\end{equation}
Moreover, $c_1(\alpha)\to0$ as $\alpha\uparrow1^-$.
\end{lemma}

\begin{proof}
See Appendix~\ref{app:phi-expansion}.
\end{proof}

\begin{remark}
The vanishing of both $c_1(\alpha)$ and $C_{\mathrm{base}}(\alpha)$ as
$\alpha\uparrow1^-$ is the direct analytic reason why $C_2(\alpha)\to0$ in the
weak-error bound. Proposition~\ref{prop:baseline-riemann} identifies the
baseline quadrature contribution, while Lemma~\ref{lem:phi-expansion}
identifies the symbol-level correction in the kernel transform. In the Laplace
domain, the first discrete-to-continuous correction to the cumulative resolvent
is proportional to $c_1(\alpha)\tau^{-(1-\alpha)}$; once
$c_1(\alpha)\to0$, this correction shrinks to zero and the two-branch rate
degenerates to the single $\tau^{-\alpha}$ branch of the classical
($\alpha=1$) case.
\end{remark}

These two inputs feed the discrete/continuous resolvent comparison developed in Section~\ref{sec:renewal-comparison}; the technical details appear in Appendix~\ref{supp:weak-error-proofs}.

Throughout this section, unless stated otherwise, the parameters defining the bounded frequency window and the pricing functional are regarded as fixed. In particular, when discussing truncated Carr--Madan pricing below, $\eta$, $R$, $T$, $K$, and $S_0$ are fixed, and all constants are allowed to depend on these fixed quantities.

We first state the common analytic-strip assumption under which all subsequent transform and pricing comparisons are carried out.

\begin{assumption}[Admissible transform strip]\label{ass:strip}
Fix the maturity $T>0$ considered in this section. We assume that there exists a nonempty open vertical strip
\[
  \mathcal S_T:=\left\{z\in\C:\underline\eta_T<\mathrm{Re}\ z<\overline\eta_T\right\}
\]
such that the continuous transform $\phi(z,T)$ and the discrete transforms $\Phi^\tau(z,T)$ are finite and analytic on $\mathcal S_T$ for all sufficiently large $\tau$. All weak-error statements below are formulated for vertical lines $\mathrm{Re}\ z=\eta$ contained in this common admissible strip. Establishing such a strip directly from the discrete dynamics is outside the scope of the present paper.
\end{assumption}

{
\begin{assumption}[Bounded-strip Riccati regularity]\label{ass:riccati-bound}
Fix $\eta\in\R$ and $R>0$ with $\Gamma_{\eta,R}\subset\mathcal S_T$. We assume that the continuous fractional Riccati solution satisfies the bounded-strip supremum bound
\[
  M_{\eta,R,T}
  :=
  \sup_{z\in\Gamma_{\eta,R}}\sup_{0\le t\le T}|h(t,z)|
  <\infty.
\]
\end{assumption}

\begin{remark}
Together, Assumptions~\ref{ass:strip} and~\ref{ass:riccati-bound} constitute the two analytic hypotheses under which all subsequent transform and pricing comparisons are carried out. Assumption~\ref{ass:strip} provides the common domain of analyticity; Assumption~\ref{ass:riccati-bound} ensures that the continuous Riccati solution remains uniformly bounded on the relevant frequency strip, which is the regularity input needed to control the nonlinear comparison in Proposition~\ref{prop:Volterra-rate-fixed}. Parameter-specific numerical checks for both assumptions are reported in Subsection~\ref{subsec:numerical-assumption-checks}.
\end{remark}
}

Throughout, $T>0$ is fixed, $N_\tau:=\lfloor \tau T\rfloor$, $t_n:=n/\tau$, and $k:=\log(K/S_0)$ denotes log-moneyness. We write
\begin{equation}\label{eq:ctau-dtau-weak}
  c_\tau:=\sqrt{\frac{\theta(1-a_\tau)}{2\mu\tau^\alpha}},
  \qquad
  d_\tau:=\frac{\theta(1-a_\tau)}{2\mu\tau^\alpha},
\end{equation}
so that the microscopic log-price at maturity can be written as
\begin{equation}\label{eq:Ptau-weak}
  P_T^\tau
  =c_\tau\left(N_{N_\tau}^{\tau,+}-N_{N_\tau}^{\tau,-}\right)-d_\tau N_{N_\tau}^{\tau,+}.
\end{equation}


\subsection{Continuous fractional Riccati equation}

Let $P_T=\log(S_T/S_0)$ denote the rough Heston log-price at maturity $T$.
By \cite{el2019characteristic}, its log-price transform admits the representation
\begin{equation}\label{eq:phi-cont-weak}
\phi(z,T):=\E\!\left[e^{zP_T}\right]
  =\exp\!\left(\gamma\theta\, I^1 h(\cdot,z)(T)+V_0\, I^{1-\alpha}h(\cdot,z)(T)\right),
\end{equation}
for complex $z$ in the analyticity strip under consideration, where, for $r\in(0,1]$,
\[
  I^r f(t):=\frac{1}{\Gamma(r)}\int_0^t (t-s)^{r-1}f(s)\,ds,
\]
and $h(\cdot,z)$ solves the fractional Riccati equation
\begin{equation}\label{eq:riccati-cont-weak}
  D^\alpha h(t,z)
  =\frac12(z^2-z)+\gamma(\rho\nu z-1)h(t,z)
   +\frac{(\gamma\nu)^2}{2}\left(h(t,z)\right)^2,
  \qquad I^{1-\alpha}h(0,z)=0.
\end{equation}
For real $u$, the characteristic function is recovered as $\phi(\ii u,T)$.
Here, the fractional derivative $D^\alpha$ is defined by
\[
  D^\alpha f(t)
  :=\frac{1}{\Gamma(1-\alpha)}
    \frac{d}{dt}\int_0^t (t-s)^{-\alpha}f(s)\,ds.
\]
Equivalently, applying $I^\alpha$ to \eqref{eq:riccati-cont-weak}, one obtains the Volterra form
\begin{equation}\label{eq:riccati-volterra-weak}
  h(t,z)
  =\frac{1}{\Gamma(\alpha)}\int_0^t
    (t-s)^{\alpha-1}
    \left[
      \frac12(z^2-z)
      +\gamma(\rho\nu z-1)h(s,z)
      +\frac{(\gamma\nu)^2}{2}\left(h(s,z)\right)^2
    \right]ds.
\end{equation}


\subsection{Exact discrete transform recursion}\label{subsec:exact-discrete-transform}

In this subsection we derive the \emph{exact} finite-$\tau$ transform recursion for the rescaled INAR($\infty$) log-price. The key observation is that, because the offspring mechanism is Poisson, the bivariate INAR($\infty$) process admits a discrete-time branching--immigration representation, entirely analogous in spirit to the cluster representation used for Hawkes processes. In the present setting, however, the clusters evolve on the integer time grid, and the relevant recursion is therefore a \emph{discrete} Volterra recursion.

Throughout this subsection, we use the following convention for the bivariate influence matrix: the \emph{rows} correspond to the child type (future arrival type), while the \emph{columns} correspond to the parent type (source event type). Under the parameterization of Definition~\ref{def:params},
\[
  \chi=\frac1{1+\beta}
  \begin{pmatrix}
    1 & \beta\\
    1 & \beta
  \end{pmatrix},
\]
so that a ``$+$''-parent produces ``$+$''- and ``$-$''-children with the same lag-$k$ mean $\varphi_k^\tau/(1+\beta)$, whereas a ``$-$''-parent produces ``$+$''- and ``$-$''-children with the same lag-$k$ mean $\beta\varphi_k^\tau/(1+\beta)$.

We write
\[
  \mathcal D_\tau(T):=\left\{z\in\mathbb C:\ \E\left[\left|e^{zP_T^\tau}\right|\right]<\infty\right\}
\]
for the finite-$\tau$ moment domain at maturity $T$.

For $z\in\mathcal D_\tau(T)$, define the one-step exponential weights
\begin{equation}\label{eq:Theta-pm-exact}
  \Theta_+^\tau(z):=z(c_\tau-d_\tau),
  \qquad
  \Theta_-^\tau(z):=-zc_\tau,
\end{equation}
where $c_\tau$ and $d_\tau$ are defined in \eqref{eq:ctau-dtau-weak}.
These are the direct contributions of one ``$+$''-arrival and one ``$-$''-arrival, respectively, to the exponent $zP_T^\tau$.

Next, introduce the \emph{parent-type aggregated lag coefficients}
\begin{equation}\label{eq:q-pm-exact}
  q_k^{\tau,+}:=\frac{\varphi_k^\tau}{1+\beta},
  \qquad
  q_k^{\tau,-}:=\frac{\beta\varphi_k^\tau}{1+\beta},
  \qquad k\ge 1.
\end{equation}
Here $q_k^{\tau,+}$ corresponds to a ``$+$''-type parent and $q_k^{\tau,-}$ to a ``$-$''-type parent. These coefficients are \emph{not} indexed by child type: rather, they give the common lag-$k$ Poisson mean with which a parent of the specified type produces each of the two child types.

We now introduce the cluster transforms. Consider a single immigrant born at the current time and let its descendants evolve over the next $n$ time steps according to the bivariate Poisson branching mechanism of the INAR($\infty$) model. Let
\[
  \mathcal M_n^{\tau,+}(z)
  \quad\text{and}\quad
  \mathcal M_n^{\tau,-}(z)
\]
denote the transforms of the total contribution of such a cluster to the exponent $zP_T^\tau$, when the initial immigrant is of type $+$ and $-$, respectively. Since the ancestor itself contributes exactly one ``$+$''-jump or one ``$-$''-jump at its birth time, we have the initial conditions
\begin{equation}\label{eq:M0-exact}
  \mathcal M_0^{\tau,+}(z)=e^{\Theta_+^\tau(z)},
  \qquad
  \mathcal M_0^{\tau,-}(z)=e^{\Theta_-^\tau(z)}.
\end{equation}

The reason the quantity $\mathcal M_{n-k}^{\tau,+}(z)+\mathcal M_{n-k}^{\tau,-}(z)-2$ appears below is the following. For a fixed parent type and lag $k$, the numbers of ``$+$''-children and ``$-$''-children are independent Poisson random variables with the \emph{same} mean $q_k^{\tau,\pm}$. Hence the lag-$k$ continuation factor is the product of the two Poisson cluster factors:
\begin{align*}
  &\exp\!\left(q_k^{\tau,\pm}(\mathcal M_{n-k}^{\tau,+}(z)-1)\right)\,
  \exp\!\left(q_k^{\tau,\pm}(\mathcal M_{n-k}^{\tau,-}(z)-1)\right)\\
  &=
  \exp\!\left(q_k^{\tau,\pm}(\mathcal M_{n-k}^{\tau,+}(z)+\mathcal M_{n-k}^{\tau,-}(z)-2)\right).
\end{align*}
This is exactly the aggregated form used in the proposition below.

The next proposition gives the exact cluster recursion in this aggregated representation and the exact transform of the finite-$\tau$ INAR approximation.

\begin{proposition}[Exact exponential discrete Volterra recursion]\label{prop:exact-transform-weak}
For each $\tau\ge1$ and each $z\in\mathcal D_\tau(T)$, let $q_k^{\tau,+}$ and $q_k^{\tau,-}$ be the parent-type coefficients from \eqref{eq:q-pm-exact}, corresponding respectively to a ``$+$''-parent and a ``$-$''-parent. Then the cluster transforms $\mathcal M_n^{\tau,+}(z)$ and $\mathcal M_n^{\tau,-}(z)$ satisfy, for every $n\ge1$,
\begin{align}
  \mathcal M_n^{\tau,+}(z)
  &=\exp\!\left(
      \Theta_+^\tau(z)
      +\sum_{k=1}^n q_k^{\tau,+}
       \left(\mathcal M_{n-k}^{\tau,+}(z)+\mathcal M_{n-k}^{\tau,-}(z)-2\right)
    \right), \label{eq:Mplus-exact}
  \\
  \mathcal M_n^{\tau,-}(z)
  &=\exp\!\left(
      \Theta_-^\tau(z)
      +\sum_{k=1}^n q_k^{\tau,-}
       \left(\mathcal M_{n-k}^{\tau,+}(z)+\mathcal M_{n-k}^{\tau,-}(z)-2\right)
    \right). \label{eq:Mminus-exact}
\end{align}
If we define
\begin{equation}\label{eq:Gn-exact}
  G_n^\tau(z):=\mathcal M_n^{\tau,+}(z)+\mathcal M_n^{\tau,-}(z)-2,
\end{equation}
then $G_n^\tau(z)$ satisfies the closed recursion
\begin{equation}\label{eq:G-rec-exact}
  G_n^\tau(z)
  =\exp\!\left(
      \Theta_+^\tau(z)+\sum_{k=1}^n q_k^{\tau,+}G_{n-k}^\tau(z)
    \right)
   +\exp\!\left(
      \Theta_-^\tau(z)+\sum_{k=1}^n q_k^{\tau,-}G_{n-k}^\tau(z)
    \right)-2.
\end{equation}
Moreover, the exact transform of the INAR approximation is
\begin{equation}\label{eq:Phi-exact}
  \Phi^\tau(z,T):=\E\!\left[e^{zP_T^\tau}\right]
  =\exp\!\left(\sum_{m=1}^{N_\tau}\hat\mu_\tau(m)\,G_{N_\tau-m}^\tau(z)\right).
\end{equation}
\end{proposition}

\begin{proof}
{The bivariate INAR($\infty$) process admits a cluster representation: each immigrant generates a Poisson-branching cluster, and the log-price transform factors over these clusters. Tracking the contribution of a single cluster of type $+$ or $-$ over $n$ residual steps leads directly to the recursion \eqref{eq:Mplus-exact}--\eqref{eq:Mminus-exact}. Summing over the independent immigrants whose baseline-intensity weights are $\hat\mu_\tau(m)$ yields the outer exponential formula \eqref{eq:Phi-exact}.} Full details are in Appendix~\ref{supp:weak-error-proofs}.
\end{proof}


\subsection{Second-order reduction on bounded frequency windows}
\label{subsec:second-order-reduction}

We now reduce the exact exponential recursion
\eqref{eq:G-rec-exact} to a second-order discrete recursion on each bounded vertical strip. Fix $\eta\in\mathbb R$ and $R>0$, and define
\[
  \Gamma_{\eta,R}:=\{\eta+\ii \xi:\ |\xi|\le R\},
  \qquad
  M_{\eta,R}:=\sup_{z\in\Gamma_{\eta,R}}|z|=\sqrt{\eta^2+R^2}.
\]
For the remainder of this subsection, all constants may depend on $(\eta,R,T)$, but not on $(n,\tau,z)$.
Throughout, we assume that $\Gamma_{\eta,R}$ is contained in the common admissible strip $\mathcal S_T$ from Assumption~\ref{ass:strip}; in particular, $\Gamma_{\eta,R}\subset\mathcal D_\tau(T)$ for all sufficiently large~$\tau$, so that all moment-generating function manipulations below are valid on this segment.

Recalling the exponents appearing in \eqref{eq:G-rec-exact}, define
\begin{equation}\label{eq:AB-def-new33}
  A_n^\tau(z)
  :=\Theta_+^\tau(z)+\sum_{k=1}^n q_k^{\tau,+}G_{n-k}^\tau(z),
  \qquad
  B_n^\tau(z)
  :=\Theta_-^\tau(z)+\sum_{k=1}^n q_k^{\tau,-}G_{n-k}^\tau(z),
\end{equation}
where
\[
  \Theta_+^\tau(z)=z(c_\tau-d_\tau),
  \qquad
  \Theta_-^\tau(z)=-zc_\tau,
\]
and
\[
  q_k^{\tau,+}=\frac{\varphi_k^\tau}{1+\beta},
  \qquad
  q_k^{\tau,-}=\frac{\beta\varphi_k^\tau}{1+\beta}.
\]
By definition,
\begin{equation}\label{eq:G-from-AB-new33}
  G_n^\tau(z)=e^{A_n^\tau(z)}+e^{B_n^\tau(z)}-2.
\end{equation}

It is convenient to introduce the symmetric and antisymmetric combinations
\begin{equation}\label{eq:SD-def-new33}
  S_n^\tau(z):=A_n^\tau(z)+B_n^\tau(z),
  \qquad
  D_n^\tau(z):=A_n^\tau(z)-B_n^\tau(z).
\end{equation}
Using
\[
  q_k^{\tau,+}+q_k^{\tau,-}=\varphi_k^\tau,
  \qquad
  q_k^{\tau,+}-q_k^{\tau,-}=\frac{1-\beta}{1+\beta}\,\varphi_k^\tau,
\]
together with
\[
  \Theta_+^\tau(z)+\Theta_-^\tau(z)=-zd_\tau,
  \qquad
  \Theta_+^\tau(z)-\Theta_-^\tau(z)=z(2c_\tau-d_\tau),
\]
we obtain the exact recursions
\begin{align}
  S_n^\tau(z)
  &= -zd_\tau+\sum_{k=1}^n \varphi_k^\tau\,G_{n-k}^\tau(z),
  \label{eq:S-rec-new33}
  \\
  D_n^\tau(z)
  &= z(2c_\tau-d_\tau)
     +\frac{1-\beta}{1+\beta}\sum_{k=1}^n \varphi_k^\tau\,G_{n-k}^\tau(z).
  \label{eq:D-rec-new33}
\end{align}
Since
\[
  A_n^\tau(z)=\frac{S_n^\tau(z)+D_n^\tau(z)}{2},
  \qquad
  B_n^\tau(z)=\frac{S_n^\tau(z)-D_n^\tau(z)}{2},
\]
it follows from \eqref{eq:G-from-AB-new33} that
\begin{equation}\label{eq:G-SD-new33}
  G_n^\tau(z)
  =
  e^{(S_n^\tau(z)+D_n^\tau(z))/2}
  +
  e^{(S_n^\tau(z)-D_n^\tau(z))/2}
  -2.
\end{equation}

With $\varepsilon_\tau:=1-a_\tau=\gamma\tau^{-\alpha}$, the first step is a
bounded-strip bootstrap on the exact recursion:
\[
  \sup_{0\le n\le N_\tau}\sup_{z\in\Gamma_{\eta,R}}
  \left(
    |G_n^\tau(z)|+|A_n^\tau(z)|+|B_n^\tau(z)|+|S_n^\tau(z)|+|D_n^\tau(z)|
  \right)
  \le C_{\eta,R,T}\,\varepsilon_\tau,
\]
for all sufficiently large $\tau$; this follows from a Gronwall-type argument on
\eqref{eq:G-rec-exact}, and full details are given in Appendix~\ref{supp:weak-error-proofs}.

\begin{proposition}[Second-order reduced discrete recursion]
\label{prop:reduced-recursion-new33}
For every fixed $\eta\in\mathbb R$ and $R>0$, uniformly for $0\le n\le N_\tau$ and
$z\in\Gamma_{\eta,R}$,
\begin{equation}\label{eq:G-reduced-new33}
\begin{aligned}
  G_n^\tau(z)
  &=
    \sum_{k=1}^n \varphi_k^\tau\,G_{n-k}^\tau(z)
    -zd_\tau
    + c_\tau^2 z^2 \\
  &\quad
    + \frac{1-\beta}{1+\beta}\,c_\tau z
      \sum_{k=1}^n \varphi_k^\tau\,G_{n-k}^\tau(z)
    + \frac{1+\beta^2}{2(1+\beta)^2}
      \left(\sum_{k=1}^n \varphi_k^\tau\,G_{n-k}^\tau(z)\right)^2
    + \mathcal R_n^\tau(z),
\end{aligned}
\end{equation}
where
\begin{equation}\label{eq:R-reduced-new33}
  \sup_{0\le n\le N_\tau}\sup_{z\in\Gamma_{\eta,R}}
  |\mathcal R_n^\tau(z)|
  \le C_{\eta,R,T}\,\varepsilon_\tau^3.
\end{equation}
\end{proposition}

\begin{proof}
{Using the bounded-strip smallness estimate above, expand $e^{A_n^\tau}+e^{B_n^\tau}-2$ to second order in the small exponents $A_n^\tau$ and $B_n^\tau$. The coefficient identities \eqref{eq:source-match}--\eqref{eq:quadratic-match} then identify the source, linear, and quadratic terms as in \eqref{eq:G-reduced-new33}, with the third-order remainder bounded by $C\varepsilon_\tau^3$.} Full details are in Appendix~\ref{supp:weak-error-proofs}.
\end{proof}

Proposition~\ref{prop:reduced-recursion-new33} is the point where the nonlinear discrete dynamics acquires the same source, linear, and quadratic structure as the fractional Riccati equation. From here on, the argument is no longer probabilistic: it becomes a deterministic comparison between a discrete renewal equation and its continuous Volterra counterpart.


\subsection{Rescaling and match with the continuous fractional Riccati equation}
\label{subsec:rescaling-match}

Next, we want to show that, after a deterministic rescaling, the reduced discrete recursion matches the continuous Riccati coefficients up to a local truncation error.

We now compare the second-order reduced discrete recursion
\eqref{eq:G-reduced-new33} with the continuous fractional Riccati equation
\eqref{eq:riccati-volterra-weak}. The main point is that, after a suitable deterministic rescaling, the reduced discrete recursion has the same leading-order source, linear, and quadratic coefficients as the continuous rough Heston Riccati equation.

Recall that
\[
  \varepsilon_\tau:=1-a_\tau=\gamma\tau^{-\alpha},
\]
and define the rescaled discrete variable
\begin{equation}\label{eq:H-rescaled-def}
  H_n^\tau(z):=\frac{\mu}{\theta\,\varepsilon_\tau}\,G_n^\tau(z),
  \qquad 0\le n\le N_\tau,\quad z\in\Gamma_{\eta,R}.
\end{equation}
Equivalently,
\[
  G_n^\tau(z)=\frac{\theta}{\mu}\,\varepsilon_\tau H_n^\tau(z).
\]

This normalization is chosen so that the source term in
\eqref{eq:G-reduced-new33} becomes exactly of order $\tau^{-\alpha}$ with the same coefficient as in the continuous Riccati equation. Indeed, substituting
$G_{n-k}^\tau(z)=\frac{\theta}{\mu}\,\varepsilon_\tau H_{n-k}^\tau(z)$
into \eqref{eq:G-reduced-new33} and dividing by $(\theta/\mu)\varepsilon_\tau$, we obtain
\begin{equation}\label{eq:Htau-rec-pre}
\begin{aligned}
  H_n^\tau(z)
  &= a_\tau\sum_{k=1}^n \varphi_k H_{n-k}^\tau(z)
     +\frac{-zd_\tau+c_\tau^2z^2}{(\theta/\mu)\varepsilon_\tau}
     +\frac{1-\beta}{1+\beta}\,c_\tau z\,
      a_\tau\sum_{k=1}^n \varphi_k H_{n-k}^\tau(z)
\\
  &\qquad
     +\frac{\theta}{\mu}\,
      \frac{1+\beta^2}{2(1+\beta)^2}\,
      \varepsilon_\tau
      \left(a_\tau\sum_{k=1}^n \varphi_k H_{n-k}^\tau(z)\right)^2
     +\widetilde{\mathcal R}_n^\tau(z),
\end{aligned}
\end{equation}
where
\begin{equation}\label{eq:Rtilde-def}
  \widetilde{\mathcal R}_n^\tau(z)
  :=\frac{\mu}{\theta\,\varepsilon_\tau}\mathcal R_n^\tau(z).
\end{equation}
By Proposition~\ref{prop:reduced-recursion-new33},
\begin{equation}\label{eq:Rtilde-bound}
  \sup_{0\le n\le N_\tau}\sup_{z\in\Gamma_{\eta,R}}
  |\widetilde{\mathcal R}_n^\tau(z)|
  \le C_{\eta,R,T}\,\varepsilon_\tau^2.
\end{equation}

We now identify the coefficients in \eqref{eq:Htau-rec-pre} term by term.

\paragraph{Source term}
Using
$c_\tau^2=\frac{\theta\varepsilon_\tau}{2\mu\tau^\alpha}$
and
$d_\tau=\frac{\theta\varepsilon_\tau}{2\mu\tau^\alpha}$,
we get the exact identity
\begin{equation}\label{eq:source-match}
  \frac{-zd_\tau+c_\tau^2z^2}{(\theta/\mu)\varepsilon_\tau}
  =\frac{1}{2\tau^\alpha}(z^2-z).
\end{equation}

\paragraph{Linear term}
Using
\[
  c_\tau=\sqrt{\frac{\theta\gamma}{2\mu}}\,\tau^{-\alpha},
\]
the discrete linear coefficient is
$\frac{1-\beta}{1+\beta}\sqrt{\frac{\theta\gamma}{2\mu}}\,z\,\tau^{-\alpha}$.
On the other hand, by \eqref{correlation} and \eqref{eq:mu-nu-map},
\[
  \rho=\frac{1-\beta}{\sqrt{2(1+\beta^2)}},
  \qquad
  \nu=\sqrt{\frac{\theta(1+\beta^2)}{\gamma\mu(1+\beta)^2}},
\]
and hence
\begin{equation}\label{eq:rho-nu-id}
  \gamma\rho\nu
  =
  \frac{1-\beta}{1+\beta}\sqrt{\frac{\gamma\theta}{2\mu}}.
\end{equation}
Therefore,
\begin{equation}\label{eq:linear-match}
  \frac{1-\beta}{1+\beta}\,c_\tau z
  =
  \gamma\rho\nu\,z\,\tau^{-\alpha}.
\end{equation}

\paragraph{Quadratic term}
Again using \eqref{eq:mu-nu-map} and $\varepsilon_\tau=\gamma\tau^{-\alpha}$,
\begin{equation}\label{eq:quadratic-match}
  \frac{\theta}{\mu}\frac{1+\beta^2}{2(1+\beta)^2}\,\varepsilon_\tau
  =
  \frac{(\gamma\nu)^2}{2}\tau^{-\alpha}.
\end{equation}

Combining \eqref{eq:Htau-rec-pre}--\eqref{eq:quadratic-match}, and using
$a_\tau=1-\gamma\tau^{-\alpha}$, we obtain the following form of the
reduced discrete recursion:
\begin{equation}\label{eq:Htau-rec-main}
\begin{aligned}
  H_n^\tau(z)
  &= a_\tau\sum_{k=1}^n \varphi_k H_{n-k}^\tau(z)
     +\frac{1}{2\tau^\alpha}(z^2-z)
\\
  &\quad
     +\gamma\rho\nu\,z\,\tau^{-\alpha}
      \sum_{k=1}^n \varphi_k H_{n-k}^\tau(z)
     +\frac{(\gamma\nu)^2}{2}\tau^{-\alpha}
      \left(\sum_{k=1}^n \varphi_k H_{n-k}^\tau(z)\right)^2
     +E_n^\tau(z),
\end{aligned}
\end{equation}
where the remainder $E_n^\tau(z)$ collects the difference between
\eqref{eq:Htau-rec-pre} and the leading-order approximation above. In particular, by \eqref{eq:Rtilde-bound}, the exact coefficient identities above, and the exact relation $a_\tau=1-\gamma\tau^{-\alpha}$,
\begin{equation}\label{eq:En-bound}
  \sup_{0\le n\le N_\tau}\sup_{z\in\Gamma_{\eta,R}}
  |E_n^\tau(z)|
  \le C_{\eta,R,T}\tau^{-2\alpha}.
\end{equation}

Equation \eqref{eq:Htau-rec-main} already shows that the reduced discrete recursion has the same source, linear, and quadratic coefficients as the continuous fractional Riccati equation, up to a local truncation error of order $\tau^{-2\alpha}$. The remaining task is to identify the discrete Volterra operator and compare it quantitatively with the continuous fractional integral in \eqref{eq:riccati-volterra-weak}.

To do so, we next rewrite \eqref{eq:Htau-rec-main} in renewal form. Let
$\psi^\tau:=\sum_{m\ge1}(\varphi^\tau)^{*m}$
be the discrete renewal kernel associated with $\varphi^\tau=a_\tau\varphi$, and define
\[
  \mathsf{S}_k^\tau:=\frac{\tau(1-a_\tau)}{a_\tau}\psi_k^\tau,
  \qquad k\ge1.
\]
A key technical input is that the measures associated with $\mathsf{S}^\tau$ converge weakly to the Mittag--Leffler resolvent density
\[
  f_{\alpha,\gamma}(t)=\gamma t^{\alpha-1}E_{\alpha,\alpha}(-\gamma t^\alpha);
\]
see Appendix~\ref{supp:scaling-proofs}.
This is exactly the resolvent kernel that arises in the continuous rough Heston Riccati equation.


\subsection{Renewal inversion and Laplace-domain comparison mechanisms}
\label{sec:renewal-comparison}

The goal of this subsection is to rewrite the reduced discrete recursion in
resolvent form and to identify the comparison mechanism between the discrete and
continuous Volterra structures at the level of cumulative resolvents. In this
subsection we isolate the exact discrete renewal identities, the pointwise
renewal bounds, and the Laplace-domain consistency estimates that identify the
mechanism behind the comparison. 

Introduce the quadratic polynomial
\begin{equation}\label{eq:Fz-def}
  F_z(x):=\frac12(z^2-z)+\gamma\rho\nu z\,x+\frac{(\gamma\nu)^2}{2}x^2,
  \qquad x\in\C.
\end{equation}
Then \eqref{eq:Htau-rec-main} can be rewritten as
\begin{equation}\label{eq:Htau-rec-main-again}
  H_n^\tau(z)
  =
  a_\tau\sum_{k=1}^n\varphi_k H_{n-k}^\tau(z)
  +\tau^{-\alpha}F_z\!\left(\sum_{k=1}^n\varphi_k H_{n-k}^\tau(z)\right)
  +E_n^\tau(z),
\end{equation}
where
\begin{equation}\label{eq:En-bound-again}
  \sup_{0\le n\le N_\tau}\sup_{z\in\Gamma_{\eta,R}}
  |E_n^\tau(z)|
  \le C_{\eta,R,T}\tau^{-2\alpha}.
\end{equation}

Using the reduced recursion together with the bounded-strip smallness, one may replace the averaged state variable by $H_n^\tau$ itself up to a lower-order error; see Appendix~\ref{supp:weak-error-proofs}. Inverting the discrete renewal operator then rewrites the reduced recursion in discrete resolvent form, while the continuous fractional Riccati equation admits an equivalent resolvent form with kernel $f_{\alpha,\gamma}$; see Appendix~\ref{supp:weak-error-proofs}.

We now introduce the cumulative resolvents associated with the discrete and
continuous renewal structures.

\begin{definition}[Discrete and continuous cumulative resolvents]
\label{def:cum-resolvents}
The discrete cumulative resolvent is defined by
\[
  \mathcal G_n^\tau
  :=
  \frac{1-a_\tau}{\gamma a_\tau}
  \sum_{k=1}^n\psi_k^\tau,
  \qquad n\ge0,
\]
with $\mathcal G_0^\tau=0$.
The continuous cumulative resolvent associated with $f_{\alpha,\gamma}$ is
defined by
\[
  \mathcal G(t)
  :=
  \frac1\gamma\left(1-E_\alpha(-\gamma t^\alpha)\right),
  \qquad t\ge0.
\]
Equivalently,
$\mathcal G(t)=\frac1\gamma\int_0^t f_{\alpha,\gamma}(s)\,ds$.
\end{definition}

A direct generating-function calculation gives the closed discrete renewal
equation
\begin{equation}\label{eq:cum-resolvent-renewal-mainpaper}
  \mathcal G_n^\tau
  =
  a_\tau\sum_{k=1}^n\varphi_k\,\mathcal G_{n-k}^\tau
  +\tau^{-\alpha}\sum_{k=1}^n\varphi_k,
  \qquad n\ge1.
\end{equation}
If $\mathcal G^\tau(t):=\mathcal G_n^\tau$ for
$t\in[n/\tau,(n+1)/\tau)$, then its Laplace transform is
\begin{equation}\label{eq:cum-resolvent-laplace-mainpaper}
  \widehat{\mathcal G^\tau}(z)
  =
  \frac{1-a_\tau}{\gamma z}\,
  \frac{\hat\varphi(z/\tau)}{1-a_\tau\hat\varphi(z/\tau)},
  \qquad z>0,
\end{equation}
while the continuous cumulative resolvent satisfies
\begin{equation}\label{eq:cum-resolvent-laplace-cont-mainpaper}
  \widehat{\mathcal G}(z)=\frac{1}{z(z^\alpha+\gamma)},
  \qquad z>0.
\end{equation}
These formulas make the comparison mechanism explicit. Inserting
Lemma~\ref{lem:phi-expansion} into
\eqref{eq:cum-resolvent-laplace-mainpaper} yields the symbol-level estimate
\[
  \widehat{\mathcal G^\tau}(z)-\widehat{\mathcal G}(z)
  = \mathcal O\!\left(c_1(\alpha)\tau^{-(1-\alpha)}\right),
\]
uniformly on compact $z$-sets, so the first discrete-to-continuous correction
vanishes as $\alpha\uparrow1^-$. The derivation is carried out in Appendix~\ref{supp:weak-error-proofs}.


\subsection{Riccati comparison on bounded strips}
\label{subsec:transform-bound}

For fixed $(\eta,R)$, all constants may depend on $(\eta,R,T)$, but not on $(n,\tau,z)$. On each bounded vertical strip where the continuous fractional Riccati solution remains bounded, the continuous solution enjoys the time-regularity estimates needed below, and the corresponding regularity bounds for the Riccati nonlinearity $g_z(t)=F_z(h(t,z))$ are standard; see Appendix~\ref{supp:weak-error-proofs}. Parameter-specific bounded-strip Riccati checks for the numerical experiments are reported in Subsection~\ref{subsec:numerical-assumption-checks}.

The weakly singular quadrature constant generating the coefficient $c_1(\alpha)$, the corresponding quadrature-defect decomposition for $\mathcal G'$, and a pointwise renewal bound for the discrete resolvent increments are likewise proved in Appendix~\ref{supp:weak-error-proofs}. These are the inputs that turn the symbol-level correction into the following uniform time-domain comparison of the cumulative resolvents.

\begin{proposition}[Uniform cumulative-resolvent comparison]
\label{prop:cum-resolvent-uniform-comparison}
Assume $\alpha\in(1/2,1)$ and let
\[
  \mathcal G_n^\tau
  :=
  \frac{1-a_\tau}{\gamma a_\tau}\sum_{k=1}^n\psi_k^\tau,
\qquad
\text{and}
\qquad
  \mathcal G(t)
  :=
  \frac1\gamma\left(1-E_\alpha(-\gamma t^\alpha)\right).
\]
Then there exists a coefficient $C_{\mathrm{res}}(\alpha)\ge0$ and a constant $C_T>0$, independent of $\tau$, such that
\begin{equation}\label{eq:cum-resolvent-uniform-comparison}
  \sup_{0\le n\le N_\tau}
  |\mathcal G_n^\tau-\mathcal G(t_n)|
  \le
  C_T\tau^{-\alpha}+C_{\mathrm{res}}(\alpha)\,\tau^{-(1-\alpha)},
\end{equation}
with $C_{\mathrm{res}}(\alpha)\to0$ as $\alpha\uparrow1^-$. 
\end{proposition}

\begin{proof}
{The proof starts from the explicit formulas
\eqref{eq:cum-resolvent-laplace-mainpaper} and
\eqref{eq:cum-resolvent-laplace-cont-mainpaper}. In the Laplace domain,
$\widehat{\mathcal{G}}^\tau(z)\approx\widehat{\mathcal{G}}(z)$ up to a
correction proportional to $c_1(\alpha)(z/\tau)^{1-\alpha}$. Inverting this
Laplace-domain correction in discrete time produces the
$C_{\mathrm{res}}(\alpha)\tau^{-(1-\alpha)}$ term, with
$C_{\mathrm{res}}(\alpha)\propto c_1(\alpha)\to0$. The $C_T\tau^{-\alpha}$ term
comes from the regular Riemann sum approximation of $\mathcal{G}$.} Full
details are in Appendix~\ref{supp:weak-error-proofs}.
\end{proof}

We now introduce the discrete and continuous resolvent operators acting on a test
function $g$:
\[
  (\mathcal Q_\tau g)_n
  :=
  \frac{1}{\gamma\tau}\sum_{j=1}^{n-1}\mathsf{S}_{n-j}^\tau g(t_j),
  \qquad 1\le n\le N_\tau,
\]
and
\[
  (\mathcal Q g)(t_n)
  :=
  \frac1\gamma\int_0^{t_n}f_{\alpha,\gamma}(t_n-s)g(s)\,ds.
\]

\begin{proposition}[Transfer from cumulative-resolvent comparison to operator comparison]
\label{prop:resolvent-transfer-from-cumulative}
Let $g:[0,T]\to\C$ be absolutely continuous and satisfy
\begin{align}
  |g(t)-g(0)| &\le C_g t^\alpha,
  &&0\le t\le T,\label{eq:g-assump-1}\\
  |\partial_t g(t)| &\le C_g t^{\alpha-1},
  &&0<t\le T.\label{eq:g-assump-2}
\end{align}
Then there exists a coefficient $C_{\mathrm{op}}(\alpha)\ge0$ and a constant
$C_T'>0$, depending only on $(\alpha,\gamma,T)$, such that
\begin{equation}\label{eq:resolvent-operator-compare-main}
  \sup_{1\le n\le N_\tau}
  |(\mathcal Q_\tau g)_n-(\mathcal Q g)(t_n)|
  \le
  C_T'(|g(0)|+C_g)\tau^{-\alpha}
  +
  C_{\mathrm{op}}(\alpha)(|g(0)|+C_g)\tau^{-(1-\alpha)},
\end{equation}
where $C_{\mathrm{op}}(\alpha)\to0$ as $\alpha\uparrow1^-$.
\end{proposition}

\begin{proof}
See Appendix~\ref{supp:weak-error-proofs}.
\end{proof}

Applying Proposition~\ref{prop:resolvent-transfer-from-cumulative} to the
continuous Riccati nonlinearity yields the nonlinear resolvent estimate needed
below. The final comparison between $H^\tau$ and $h$ is then closed with a
discrete weakly singular Gr\"onwall estimate; see Appendix~\ref{supp:weak-error-proofs}.

\begin{proposition}[Bounded-strip Riccati comparison]
\label{prop:Volterra-rate-fixed}
{Fix $\eta\in\R$ and $R>0$. Under Assumptions~\ref{ass:strip} and~\ref{ass:riccati-bound},} there exists a coefficient $C_{\mathrm{Ric}}(\alpha)\ge0$ and a constant
$C_{\eta,R,T}>0$ such that
\begin{equation}\label{eq:Volterra-rate-fixed}
  \sup_{0\le n\le N_\tau}\sup_{z\in\Gamma_{\eta,R}}
  \left|H_n^\tau(z)-h(t_n,z)\right|
  \le
  C_{\eta,R,T}\tau^{-\alpha}+C_{\mathrm{Ric}}(\alpha)\tau^{-(1-\alpha)},
\end{equation}
with $C_{\mathrm{Ric}}(\alpha)\to0$ as $\alpha\uparrow1^-$.
\end{proposition}

\begin{proof}
{The argument has four steps; full technical details are in Appendix~\ref{supp:weak-error-proofs}.

\medskip
\noindent\textit{Step 1: Reformulation in resolvent form.}
The averaged-state replacement estimate in Appendix~\ref{supp:weak-error-proofs} shows that
$|H_n^\tau - \sum_{k=1}^n\varphi_k H_{n-k}^\tau|\le C\tau^{-\alpha}$, so the averaged
state variable can be replaced by $H_n^\tau$ itself up to a $\tau^{-2\alpha}$
error. The discrete and continuous resolvent reformulations in Appendix~\ref{supp:weak-error-proofs} then
rewrite the reduced recursion and the continuous Riccati equation in terms of
the cumulative resolvents $\mathcal{G}_n^\tau$ and $\mathcal{G}(t)$ from
Definition~\ref{def:cum-resolvents}.

\medskip
\noindent\textit{Step 2: Cumulative-resolvent comparison (origin of the two branches).}
Proposition~\ref{prop:cum-resolvent-uniform-comparison} provides the estimate
\[
  \sup_{0\le n\le N_\tau}|\mathcal{G}_n^\tau-\mathcal{G}(t_n)|
  \le C_T\tau^{-\alpha}+C_{\mathrm{res}}(\alpha)\tau^{-(1-\alpha)}.
\]
The $\tau^{-\alpha}$ branch originates from the regular-Riemann-sum error in the baseline approximation. The $C_{\mathrm{res}}(\alpha)\tau^{-(1-\alpha)}$ branch originates from the symbol-level correction $c_1(\alpha)$ identified in Lemma~\ref{lem:phi-expansion}: in the Laplace domain, $(1-\hat\varphi(s))^{-1}$ deviates from the continuous Mittag-Leffler kernel by $c_1(\alpha)s^{1-\alpha}$, which in time domain produces the $\tau^{-(1-\alpha)}$ rate. Since $c_1(\alpha)\to0$ as $\alpha\uparrow1^-$, so does $C_{\mathrm{res}}(\alpha)$.

\medskip
\noindent\textit{Step 3: Transfer to the resolvent operators.}
Proposition~\ref{prop:resolvent-transfer-from-cumulative} transfers the
cumulative-resolvent comparison to the associated operators $\mathcal{Q}_\tau g$
and $\mathcal{Q}g$. Under Assumption~\ref{ass:riccati-bound}, the continuous
Riccati solution $h(\cdot,z)$ is uniformly $C^\alpha([0,T])$ on
$\Gamma_{\eta,R}$. Hence the operator bound applies to the nonlinearity
$g_z(t)=F_z(h(t,z))$.

\medskip
\noindent\textit{Step 4: Closing by discrete weakly singular Gr\"{o}nwall.}
Setting $e_n^\tau:=H_n^\tau(z)-h(t_n,z)$, the preceding steps yield a discrete
integral inequality of the form
$|e_n^\tau|\le C\tau^{-\alpha}+C_{\mathrm{res}}(\alpha)\tau^{-(1-\alpha)}+C_{\eta,R,T}\sum_{j<n}\Delta\mathcal{G}_{n-j}^\tau|e_j^\tau|$.
The discrete weakly singular Gronwall estimate in Appendix~\ref{supp:weak-error-proofs} closes this
inequality and propagates the two-branch rate through the entire comparison.}
\end{proof}

We now transfer the bounded-strip Riccati comparison from
Proposition~\ref{prop:Volterra-rate-fixed} to the exponent and transform levels.
The first transfer step is the logarithmic-transform estimate:

\begin{lemma}[Exponent comparison on bounded strips]
\label{lem:exponent-comparison}
For every fixed $\eta\in\R$ such that the vertical line $\mathrm{Re}\ z=\eta$ lies in
the admissible strip from Assumption~\ref{ass:strip}, and every $R>0$, there
exists a coefficient $C_{\log}(\alpha)\ge0$ and a constant $C_{\eta,R,T}>0$
such that, for all sufficiently large $\tau$,
\begin{equation}\label{eq:log-transform-bound}
  \sup_{z\in\Gamma_{\eta,R}}
  |\log\Phi^\tau(z,T)-\log\phi(z,T)|
  \le
  C_{\eta,R,T}\tau^{-\alpha}
  +
  C_{\log}(\alpha)\tau^{-(1-\alpha)},
\end{equation}
with $C_{\log}(\alpha)\to0$ as $\alpha\uparrow1^-$.
\end{lemma}

\begin{proof}
See Appendix~\ref{supp:weak-error-proofs}.
\end{proof}

The transform comparison below then follows by exponentiation. The slower
branch remains inherited from the combination of the weighted baseline
quadrature, the kernel-symbol correction, and the ensuing
discrete/continuous resolvent comparison.

\begin{proposition}[Bounded-strip transform comparison]
\label{prop:transform-bound-fixed}
{For every fixed $\eta\in\R$ such that the vertical line $\mathrm{Re}\,z=\eta$ lies in the admissible strip from Assumption~\ref{ass:strip}, and every $R>0$ with $\Gamma_{\eta,R}\subset\mathcal S_T$, under Assumptions~\ref{ass:strip} and~\ref{ass:riccati-bound},} there exists a coefficient $C_{\Phi}(\alpha)\ge0$ and a constant $C_{\eta,R,T}>0$ such that, for all sufficiently large
$\tau$,
\begin{equation}\label{eq:transform-bound-fixed}
  \sup_{z\in\Gamma_{\eta,R}}
  \left|
    \Phi^\tau(z,T)-\phi(z,T)
  \right|
  \le
  C_{\eta,R,T}\tau^{-\alpha}+C_{\Phi}(\alpha)\tau^{-(1-\alpha)},
\end{equation}
Equivalently,
\begin{equation}\label{eq:transform-bound-xi}
  \sup_{|\xi|\le R}
  \left|
    \Phi^\tau(\eta+\ii \xi,T)-\phi(\eta+\ii \xi,T)
  \right|
  \le
  C_{\eta,R,T}\tau^{-\alpha}+C_{\Phi}(\alpha)\tau^{-(1-\alpha)},
\end{equation}
with $C_{\Phi}(\alpha)\to0$ as $\alpha\uparrow1^-$.
\end{proposition}

\begin{proof}
{By Lemma~\ref{lem:exponent-comparison},
$|\log\Phi^\tau(z,T)-\log\phi(z,T)|\le C\tau^{-\alpha}+C_{\log}(\alpha)\tau^{-(1-\alpha)}$
uniformly on $\Gamma_{\eta,R}$. Because both logarithmic transforms are
uniformly bounded on $\Gamma_{\eta,R}$ for large $\tau$, exponentiating
transfers the estimate directly to the transforms $|\Phi^\tau-\phi|$; one may
therefore take $C_{\Phi}(\alpha):=C_{\log}(\alpha)$.} Full
details are in Appendix~\ref{supp:weak-error-proofs}.
\end{proof}


\subsection{Truncated Carr--Madan pricing error}
\label{subsec:pricing-error}

In this subsection, we transfer the bounded-strip transform comparison from
Proposition~\ref{prop:transform-bound-fixed} to the corresponding truncated
Carr--Madan pricing functional. This yields a bounded-frequency weak-error bound
for the low-frequency contribution to the Carr--Madan representation of the
European call price.

Recall that
\[
  P_T^\tau=\log(S_T^\tau/S_0),
  \qquad
  P_T=\log(S_T/S_0),
\]
and write
\[
  \Phi^\tau(z,T):=\E\left[e^{zP_T^\tau}\right],
  \qquad
  \phi(z,T):=\E\left[e^{zP_T}\right].
\]
Let
\[
  k:=\log(K/S_0)
\]
denote the log-moneyness.

Fix a damping parameter $\eta>1$ such that the vertical line
$\mathrm{Re}\,z=\eta$ lies in the admissible strip from
Assumption~\ref{ass:strip}. For $R>0$, define the truncated Carr--Madan
functionals
\begin{align}
  \frac{C_R^\tau(T,K)}{S_0}
  &:=
  \frac{e^{-(\eta-1)k}}{2\pi}
  \int_{-R}^{R}
    e^{-\ii \xi k}
    \frac{\Phi^\tau(\eta+\ii \xi,T)}
         {(\eta+\ii \xi)(\eta+\ii \xi-1)}
  \,d\xi,
  \label{eq:CRtau-def-trunc}
  \\
  \frac{C_R(T,K)}{S_0}
  &:=
  \frac{e^{-(\eta-1)k}}{2\pi}
  \int_{-R}^{R}
    e^{-\ii \xi k}
    \frac{\phi(\eta+\ii \xi,T)}
         {(\eta+\ii \xi)(\eta+\ii \xi-1)}
  \,d\xi.
  \label{eq:CR-def-trunc}
\end{align}
These are the low-frequency truncations of the usual Carr--Madan inversion
formula for the discrete and continuous models, respectively.

\begin{remark}
The quantities $C_R^\tau(T,K)$ and $C_R(T,K)$ are not the full call prices.
They are the prices obtained by truncating the Carr--Madan Fourier inversion to
the bounded frequency window $|\xi|\le R$. Passing from $C_R^\tau,C_R$ to the
full prices $C^\tau,C$ requires a separate high-frequency tail analysis, which
is outside the scope of the present bounded-strip comparison.
\end{remark}

We now state the direct pricing consequence of the bounded-strip transform
comparison.

\begin{proposition}[Truncated pricing error on bounded frequency windows]
\label{prop:pricing-local-trunc}
Fix $\eta>1$ in the admissible strip from Assumption~\ref{ass:strip} and let $R>0$.
Then, for all sufficiently large $\tau$,
\[
  \left|C_R^\tau(T,K)-C_R(T,K)\right|
  \le
  C_1\tau^{-\alpha}+C_2(\alpha)\tau^{-(1-\alpha)},
\]
for some constants $C_1>0$ and $C_2(\alpha)\ge0$, independent of $\tau$, with
$C_2(\alpha)\to0$ as $\alpha\uparrow1^-$.
\end{proposition}

\begin{proof}
{By Proposition~\ref{prop:transform-bound-fixed}, for all $z=\eta+\ii\xi$ with $|\xi|\le R$,
\[
  |\Phi^\tau(\eta+\ii\xi,T)-\phi(\eta+\ii\xi,T)|
  \le C_{\eta,R,T}\tau^{-\alpha}+C_\Phi(\alpha)\tau^{-(1-\alpha)}.
\]
The Carr--Madan kernel $(\eta+\ii\xi)^{-1}(\eta+\ii\xi-1)^{-1}$ is bounded in magnitude on $|\xi|\le R$ by a constant $\kappa_{\eta,R}>0$ (since $\eta>1$ keeps the denominator away from zero). Inserting the pointwise bound into the definitions \eqref{eq:CRtau-def-trunc}--\eqref{eq:CR-def-trunc} and integrating over $[-R,R]$ gives
\begin{align*}
  |C_R^\tau(T,K)-C_R(T,K)|
  &\le
  \frac{e^{-(\eta-1)k}}{2\pi}\int_{-R}^{R}
    \frac{|\Phi^\tau(\eta+\ii\xi,T)-\phi(\eta+\ii\xi,T)|}
         {|(\eta+\ii\xi)(\eta+\ii\xi-1)|}
  \,d\xi \\
  &\le \frac{R\,\kappa_{\eta,R}\,e^{-(\eta-1)k}}{\pi}
   \left(C_{\eta,R,T}\tau^{-\alpha}+C_\Phi(\alpha)\tau^{-(1-\alpha)}\right),
\end{align*}
which is exactly the stated bound with $C_1=\frac{R\,\kappa_{\eta,R}\,e^{-(\eta-1)k}}{\pi}C_{\eta,R,T}$ and $C_2(\alpha)=\frac{R\,\kappa_{\eta,R}\,e^{-(\eta-1)k}}{\pi}C_\Phi(\alpha)$. Since $C_\Phi(\alpha)\to0$ as $\alpha\uparrow1^-$, so does $C_2(\alpha)$.} The proof is provided in Appendix~\ref{supp:weak-error-proofs}.
\end{proof}

A uniform version over compact moneyness sets is recorded in Appendix~\ref{app:proof-pricing-local-trunc-uniform}.

\begin{remark}
The bounded-window weak error therefore inherits the same two-branch structure
\[
  \tau^{-\alpha}
  \qquad\text{and}\qquad
  C_2(\alpha)\,\tau^{-(1-\alpha)},
  \qquad C_2(\alpha)\to0\ \text{as }\alpha\uparrow1^-.
\]
The second term is the slower-decaying branch. Analytically, it is generated by the weakly singular baseline quadrature, the kernel-symbol correction, and the resulting discrete/continuous cumulative-resolvent comparison.

This is a low-frequency pricing result on a fixed bounded frequency window. A
full weak-error estimate for European call prices would require, in addition, a
quantitative control of the high-frequency tails in the Carr--Madan inversion
formula and a rule coupling the truncation level $R$ with the discretization
parameter $\tau$.
\end{remark}


\section{Numerical Results}\label{sec:numerical}

Section~\ref{sec:weak-euro} establishes weak-error bounds only for the truncated Carr--Madan functional on fixed bounded frequency windows. The numerical experiments below serve two distinct purposes. First, for European pricing they illustrate the practical behavior of the microstructural approximation in parameter regimes where Fourier-based benchmarks are available. Second, for Asian, lookback, and barrier contracts they demonstrate the broader applicability of the simulator, without claiming an error theory of the same strength as in the bounded-window European analysis. Accordingly, the European comparisons should be read as complementary to specialized weak schemes rather than as a claim of uniform superiority.

\subsection{Payoff conventions and implementation workflow}
\label{subsec:numerical-workflow}

\paragraph{Pricing framework and risk-neutral measure}
We work under the risk-neutral measure $\mathbb Q$ associated with the money-market num\'eraire $B_t^{0}=\exp\left(\int_0^t r_udu\right)$ where $(r_t)_{t\in[0,T]}$ represents the interest rate process.
Let the discounted price be $\tilde S_t := S_t/B_t^0$. For notational convenience, we keep writing $S_t$ for $\tilde S_t$.
Under this convention, $S$ is a $\mathbb Q$-martingale and evolves as
\[
dS_t = S_t \sqrt{V_t} dW_t^X,
\]
while $V_t$ follows the rough Heston dynamics specified below, and $d\left\langle W^X,W^V\right\rangle_t=\rho dt$.
Equivalently, one may view $S_t$ as the discounted price process under the corresponding pricing measure.

In the numerics, the cumulative INAR($\infty$) model is used as a simulation device for the limiting risk-neutral dynamics. Option prices are computed by simulating the approximating paths and evaluating the corresponding payoffs. For simplicity and consistency with the cited literature, we take the risk-free rate to be $r=0$, so no discount factor appears in the reported prices. The same implementation is used across European, Asian, lookback, and barrier contracts.

We consider the following four payoff classes:
\begin{enumerate}
    \item \textit{European options}. The payoff depends only on the terminal price:
    \[
        H_T = \begin{cases}
        \max(S_T - K, 0) & \text{for a call option},\\
        \max(K - S_T, 0) & \text{for a put option}.
        \end{cases}
    \]

    \item \textit{Arithmetic Asian options}. The payoff depends on the arithmetic average over the simulation grid:
    \[
        H_T = \begin{cases}
        \max(\bar{S}_T - K, 0) & \text{for a call option},\\
        \max(K-\bar{S}_T, 0) & \text{for a put option},
        \end{cases}
    \]
    where
    \begin{equation}\label{Asian:defn}
    \bar{S}_T = \frac{1}{M+1}\sum_{i=0}^{M} S_{i\Delta t},\qquad\text{with }\Delta t = T/M.
    \end{equation}
    In the literature one often studies the continuous approximation
    $\frac{1}{T}\int_0^T S_u\,du$. In simulation we work directly with the
    discrete average \eqref{Asian:defn}, including the initial point.

    \item \textit{Lookback options}. The payoff depends on the running maximum
    $M_T := \max_{0\leq t\leq T} S_t$ or running minimum
    $m_T := \min_{0\leq t\leq T} S_t$:
    \[
        H_T = \begin{cases}
        \max(M_T-K,0) & \text{for a lookback call option},\\
        \max(K-m_T,0) & \text{for a lookback put option}.
        \end{cases}
    \]

    \item \textit{Barrier options}. The payoff depends on whether the path hits
    a barrier level $B$ before maturity. For an up-and-in barrier call,
    \[
        H_T = \begin{cases}
        \max(S_T - K, 0) & \text{if } \max_{0\leq t\leq T} S_t \geq B,\\
        0 & \text{otherwise},
        \end{cases}
    \]
    while for a down-and-out barrier put,
    \[
        H_T = \begin{cases}
        \max(K - S_T, 0) & \text{if } \min_{0\leq t\leq T} S_t > B,\\
        0 & \text{otherwise}.
        \end{cases}
    \]
\end{enumerate}

We simulate the \emph{INAR($\infty$)} recursion on $[0,T]$ with zero pre-history ($X_n\equiv0$ for $n\le0$) via a non-circular linear convolution (CDQ--FFT). At time $n\le\lfloor\tau T\rfloor$, the update only involves lags $1,\dots,n-1$. Hence, for any $p\ge n-1$ (in particular $p\ge\lfloor\tau T\rfloor$), the INAR($p$) recursion coincides \emph{pathwise} with our INAR($\infty$) update on $[0,T]$. In other words, our implementation does not fix $p$; effectively it uses all available lags at each step, and therefore no extra ``truncation'' error from $p\to\infty$ arises on the finite horizon.

In practice one works with a finite value of $\tau$. The effect of this discretization level on numerical accuracy is examined separately in the convergence analysis below; at the implementation level, the key point is that the CDQ--FFT scheme reduces the per-path cost from the naive $\mathcal{O}(\tau^2)$ to $\mathcal{O}(\tau\log^2\tau)$ while retaining numerical stability.

The implementation revolves around two basic ingredients. First, we use a
helper kernel
\[
f_{\alpha}(t) =
\begin{cases}
0 & \text{if } t \leq 0, \\
1 - \frac{1}{\Gamma(1-\alpha)} & \text{if } t = 1, \\
\frac{1}{\Gamma(1-\alpha)} \left(\frac{1}{(t-1)^{\alpha}} - \frac{1}{t^{\alpha}}\right) & \text{if } t > 1,
\end{cases}
\]
to build the discrete heavy-tailed weights. Second, the path generator records,
on the fly, the functionals needed for the different option classes: terminal
price for European payoffs, running averages for Asian payoffs, running extrema
for lookback payoffs, and barrier-hit indicators for barrier payoffs.

\subsubsection{Enhanced parallel Monte Carlo implementation}

We employ a parallel Monte Carlo simulation that distributes the computational
workload across CPU cores. Before giving the algorithm, we first make explicit
the core $\mathcal{O}(\tau^2)$ update that CDQ accelerates. At each integer time
step $n$, the two processes $X_n^+$ and $X_n^-$ are sampled independently from
a Poisson distribution with intensity
\begin{equation}\label{eq:lambda-update}
  \lambda_n = \hat{\mu}_\tau(n) + \sum_{k=1}^{n-1} w_k Y_{n-k},
  \qquad
  Y_n := X_n^+ + \beta X_n^-.
\end{equation}
The factor $\frac{1}{1+\beta}$ in the kernel
$w_k = \frac{a_\tau}{1+\beta} f_\alpha(k)$ arises from collapsing the bivariate
$(X^+,X^-)$ system to an effective univariate intensity: since
$\mathbb{E}[Y_n \mid \mathcal{F}_{n-1}] = (1+\beta)\lambda_n$, each historical
value $Y_{n-k}$ must be divided by $(1+\beta)$ to recover the contribution to
$\lambda_n$. Computing \eqref{eq:lambda-update} sequentially costs
$\mathcal{O}(n)$ per step and $\mathcal{O}(\tau^2)$ per path. The CDQ scheme
reduces this to $\mathcal{O}(\tau\log^2\tau)$ via divide-and-conquer and FFT
convolution. All path functionals are computed on the simulated grid over
$[0,T]$.

{
}\begin{algorithm}[ht]
\caption{Parallel INAR($\infty$) simulation with CDQ--FFT convolution}\label{algo}
\SetAlgoLined
\SetKwFunction{CDQFunc}{CDQ}
\SetKwProg{Fn}{Function}{}{end}
\small
\KwIn{Model parameters $(\alpha, \gamma, \rho, \nu, \theta, V_0, S_0)$ and simulation parameters $(\tau, n_{\text{sims}})$}
\KwOut{Price estimates and 95\% confidence intervals for European, Asian, lookback, and barrier options}

\textit{Preprocessing:}\\
Derive parameters $\beta \leftarrow \text{solve\_beta}(\rho)$, $\mu \leftarrow \text{solve\_mu}(\nu, \theta, \beta, \gamma)$, and $\xi_0 \leftarrow V_0/\theta$\\
Derive INAR($\infty$) time-scaled parameters $a_\tau, \mu_\tau$ from $(\gamma, \mu, \tau, \alpha)$\\
Pre-compute the fractional kernel $w_k = \frac{a_\tau}{1+\beta} f_{\alpha}(k)$ for $k=1, \dots, \tau$\\
Pre-compute the baseline intensity $\hat{\mu}_\tau(t)$ for $t=1, \dots, \tau$\\
Pre-compute the price-path coefficients
$c_\tau \leftarrow \sqrt{\dfrac{\theta(1-a_\tau)}{2\mu\tau^\alpha}}$ and
$d_\tau \leftarrow \dfrac{\theta(1-a_\tau)}{2\mu\tau^\alpha}$\\
Identify the CDQ block lengths required by the recursion, and pre-compute the corresponding FFT plans and transformed kernel blocks\\
Build the fixed CDQ operation list (leaf/direct/FFT blocks)

\textit{Parallel simulation:}\\
Determine the number of available CPU threads $N_{\text{threads}}$\\
Partition $n_{\text{sims}}$ simulations among threads\\
\For{each thread in parallel}{
    Initialize a thread-specific random number generator and Poisson sampler\\
    Allocate once the reusable work arrays $\lambda^{\mathrm{hist}}$, $Y$, and the FFT buffer, together with thread-local sums and sums of squares for all payoffs\\
    \For{each simulation assigned to the thread}{
        Reset the reusable workspace and initialize the running summaries $N^\pm$, $S_{\mathrm{avg}}$, $S_{\max}$, $S_{\min}$, and the barrier flags\\
        \For{each pre-computed CDQ operation in execution order}{
            \eIf{the operation is a leaf at time $t$}{
                $\lambda \leftarrow \max\{0,\hat{\mu}_\tau(t) + \lambda^{\mathrm{hist}}_t\}$\\
                Sample $X_t^+, X_t^- \sim \mathrm{Poisson}(\lambda)$ independently\\
                Set $Y_t \leftarrow X_t^+ + \beta X_t^-$\\
                Update the running totals $N_t^\pm \leftarrow N_{t-1}^\pm + X_t^\pm$ and the price\\
                $S_t \leftarrow S_0\exp\!\left(c_\tau(N_t^+ - N_t^-) - d_\tau N_t^+\right)$\\
                Update the running average, extrema, and barrier indicators
            }{
                \eIf{the block is small}{
                    Propagate the left-to-right convolution directly
                }{
                    Reuse the pre-computed FFT plan and transformed kernel block to convolve the left data with the kernel and add the contribution to the right half of $\lambda^{\mathrm{hist}}$
                }
            }
        }
        Compute payoffs for all option types from the path summaries\\
        Update the thread-local payoff sums and sums of squares
    }
}

\textit{Postprocessing:}\\
Aggregate the thread-local sums and sums of squares across all threads\\
For each option type, compute the mean and 95\% confidence interval from the aggregated statistics\\
\Return{A structure containing all option price estimates and their confidence intervals}
\end{algorithm}

\subsection{Parameter-specific checks of the analytical assumptions}
\label{subsec:numerical-assumption-checks}

The weak-error analysis is stated under Assumption~\ref{ass:strip} and under
the bounded-strip Riccati regularity condition~\ref{ass:riccati-bound}. Since
these assumptions are not proved in full
generality in the present paper, we record here the parameter-specific checks
used for the numerical experiments. The purpose of this subsection is precisely
to show the reader that the reported experiments are run in parameter regimes
for which the relevant assumptions are numerically supported. These checks do
not constitute general proofs.

\paragraph{Admissible-strip check}
For the rough-parameter sets used in the numerical section, we numerically
checked the line $\mathrm{Re}\ z=2$ at maturity $T=1$. On the continuous side, solving
\eqref{eq:riccati-cont-weak} at $z=2$ gave finite values, with
\[
  h(1,2)\approx 0.999
\]
for the parameter set
\[
  (\alpha,\gamma,\rho,\nu,\theta,V_0)=(0.62,0.1,-0.681,0.331,0.3156,0.0392),
\]
and
\[
  h(1,2)\approx 0.688,\ 0.697,\ 0.715,\ 0.721
\]
for the slice-comparison parameter set
\[
  (\gamma,\rho,\nu,\theta,V_0,T)=(0.3,-0.7,1,0.02/0.3,0.02,1),
\]
at $\alpha=0.55,0.62,0.80,0.95$. On the discrete side, evaluating
\eqref{eq:G-rec-exact}--\eqref{eq:Phi-exact} at $z=2$ yielded finite transforms
for the discretization levels used in the experiments:
\[
  \log \Phi^\tau(2,1)\approx 0.05593,\ 0.05574,\ 0.05564,\ 0.05560
\]
for $\tau=40,80,160,320$ in the first parameter set, and
\[
  \log \Phi^{160}(2,1)\approx 0.02321,\ 0.02307,\ 0.02276,\ 0.02254
\]
for $\alpha=0.55,0.62,0.80,0.95$ in the slice-comparison set. These
computations provide parameter-specific evidence that the line $\mathrm{Re}\ z=2$ is
admissible for the reported experiments, but they are not a general proof of
Assumption~\ref{ass:strip}.

\paragraph{Bounded-strip Riccati check}
{The following computations verify Assumption~\ref{ass:riccati-bound} (the bounded-strip Riccati regularity condition stated in Section~\ref{sec:weak-euro}) for the parameter sets used in the experiments.}
For the same parameter sets, we numerically solved the fractional Riccati
equation on a grid of times $t\in[0,T]$ and frequencies
$z=\eta+\ii\xi$ with $\eta=2$, $|\xi|\le 20$, and $T=1$, and recorded
$\max_{0\le t\le T}|h(t,z)|$. For the rough-parameter set
\[
  (\alpha,\gamma,\rho,\nu,\theta,V_0)=(0.62,0.1,-0.681,0.331,0.3156,0.0392),
\]
we obtained
\[
  \max_{|\xi|\le 20}\max_{0\le t\le 1}|h(t,2+\ii\xi)| \approx 184.2933,
\]
while for the slice-comparison parameter set
\[
  (\gamma,\rho,\nu,\theta,V_0,T)=(0.3,-0.7,1,0.02/0.3,0.02,1),
\]
we obtained
\[
  \max_{|\xi|\le 20}\max_{0\le t\le 1}|h(t,2+\ii\xi)|
  \approx 53.3162,\ 54.1052,\ 56.4927,\ 59.0652
\]
for $\alpha=0.55,0.62,0.80,0.95$. These computations support the bounded-strip
Riccati regularity condition for the reported parameter sets only and do not
replace a general proof.


\subsection{Rough case \texorpdfstring{$(\alpha=0.62)$}{(alpha=0.62)}}

We now turn to the rough specification used for the main pricing experiments.
The same FFT-accelerated INAR($\infty$) simulator from
Subsection~\ref{subsec:numerical-workflow} is used for all payoff classes on
$[0,T]$. Here we focus on the numerical evidence most directly tied to the
analytical and modeling claims of the paper.

\begin{table}[ht]
\centering
\caption{Model parameters for the rough-case numerical experiments.}
\small
\setlength{\tabcolsep}{6pt}
\begin{tabular}{llll}
\toprule
Parameter & Value & Type & Derivation \\
\midrule
$\alpha$ & 0.62 & Direct & - \\
$\gamma$ & 0.1 & Direct & - \\
$\rho$ & $-0.681$ & Direct & - \\
$\nu$ & 0.331 & Direct & vol-of-vol in the $V$-equation \\
$\theta$ & 0.3156 & Direct & - \\
$V_0$ & 0.0392 & Direct & - \\
$S_0$ & 100 & Direct & - \\
\midrule
$\beta$ & 27.5583 & Derived & From $\displaystyle \rho=\frac{1-\beta}{\sqrt{2(1+\beta^2)}}$ \\
$\mu$ & 26.8592 & Derived & From $\displaystyle \mu=\frac{\theta(1+\beta^2)}{\gamma\nu^2(1+\beta)^2}$ \\
$\xi_0$ & 0.124208 & Derived & From $\displaystyle V_0=\xi_0\theta$ \\
\bottomrule
\end{tabular}
\label{tab:all_params}
\end{table}

We now turn to the empirical effect of the discretization level $\tau$. For a European call option with strike $K=100$, Table~\ref{tab:tau_convergence} reports the estimated price, its 95\% confidence interval, and the corresponding CPU time for increasing values of $\tau$.

Table~\ref{tab:tau_convergence} suggests numerical stabilization and close numerical agreement with the benchmark:
as $\tau$ increases from $40$ to $320$, the estimates lie within narrow $95\%$
confidence intervals around the benchmark and stabilize. {While the point estimates are not monotone in $\tau$, the benchmark deviation is generally smaller at finer discretizations in this experiment, with the smallest deviation attained at $\tau=320$.}

Balancing accuracy and cost, we therefore use $\tau=320$ as a practical default:
further increases yield improvements that are within sampling error while the
computational time grows noticeably. We keep $\tau=320$ for the remaining experiments
unless otherwise stated.

\subsubsection{Convergence analysis and optimal discretization}
\label{sec:convergence_analysis}

We now examine the practical effect of choosing a finite discretization level $\tau$. To investigate the empirical accuracy-cost trade-off, we conducted a series of convergence tests for a European call option with strike $K=100$, using an increasing number of time steps from $\tau=40$ to $\tau=320$.

\begin{table}[ht]
    \centering
    \caption{Convergence of the European call option price ($K=100$) with respect to the discretization parameter $\tau$. All simulations were run with 500,000 paths.}
    \label{tab:tau_convergence}
    \small
    \begin{tabularx}{\linewidth}{@{}>{\centering\arraybackslash}p{0.17\linewidth}>{\centering\arraybackslash}X>{\centering\arraybackslash}X>{\centering\arraybackslash}X@{}}
        \toprule
        \textit{Parameter $\tau$} & \textit{\shortstack{Estimated\\ Price}} & \textit{\shortstack{Deviation from\\ Benchmark}} & \textit{\shortstack{Computation Time\\ (seconds)}}\\
        \midrule
        40    & 9.4844 & $+0.0107$ & 0.54s\\
        80    & 9.4923 & $+0.0186$ & 1.28s\\
        160   & 9.4827 & $+0.0090$ & 3.30s\\ 
        320   & 9.4722 & $-0.0015$ & 7.63s\\
        \midrule
        \multicolumn{3}{@{}l}{\textit{Benchmark (\cite{callegaro2021fast})}} & \textit{9.4737} \\
        \bottomrule
    \end{tabularx}
\end{table}

Table~\ref{tab:all_options} reports a representative European benchmark under the rough specification.

\begin{table}[thb]
\centering
\caption{Representative European option prices under the rough Heston model ($\alpha=0.62$) using the INAR($\infty$) FFT-based simulator. All simulations use $\tau=320$ and $500,000$ paths. The values in parentheses are benchmark prices from \cite{callegaro2021fast}.}
\small
\setlength{\tabcolsep}{5pt}
\begin{tabular*}{\textwidth}{@{\extracolsep{\fill}}ccccc@{}}
\toprule
Strike & Call & 95\% CI & Put & 95\% CI \\
\midrule
80  & 22.1558(22.1366) & [22.0972, 22.2145] & 2.1141 & [2.0989, 2.1294] \\
90  & 14.9839(14.9672) & [14.9324, 15.0355] & 4.9422 & [4.9177, 4.9668] \\
100 & 9.4722(9.4737)  & [9.4470, 9.5326]  & 9.4481 & [9.4136, 9.4825] \\
110 & 5.6407(5.6234)  & [5.6070, 5.6744]  & 15.5990 & [15.5553, 15.6427] \\
120 & 3.1584(3.1424)  & [3.1331, 3.1837]  & 23.1167 & [23.0654, 23.1680] \\
\bottomrule
\end{tabular*}
\label{tab:all_options}
\end{table}

For European option pricing, the values in parentheses in Table~\ref{tab:all_options} are taken from \cite{callegaro2021fast}. As Table~\ref{tab:all_options} shows, the INAR($\infty$) estimates stay close to that Fourier benchmark across strikes.

On an Apple M4-based desktop, the optimized implementation completes the representative rough-volatility experiment in a few seconds while retaining tight confidence intervals.

\subsubsection{Comparison with existing \texorpdfstring{$\alpha<1$}{alpha<1} schemes on implied-volatility slices}
\label{sec:iv-slice-comparison}

We next consider a fixed-maturity implied-volatility slice and compare our INAR($\infty$)-based simulator with three representative alternatives from the recent literature in the regime $\alpha<1$. More precisely, we use the rough Heston parameter set
\[
\gamma=0.3,\ \theta=\frac{0.02}{0.3},\ \nu=1,\ \rho=-0.7,\ V_0=0.02,\ S_0=100,\ T=1,
\]
and the log-moneyness grid $k=\log(K/S_0)\in\{-0.5,-0.4,\ldots,0.5\}$. We compare the FFT-accelerated INAR($\infty$) scheme with a hybrid quadratic-exponential (HQE) weak scheme from the affine-forward-variance literature \cite{Gatheral2022,BayerBreneis2024WeakRoughHeston}, a multifactor/integrated Euler approximation in the spirit of \cite{AlfonsiKebaier2024}, and the inverse-Gaussian iVi scheme of \cite{AbiJaberAttal2025}. For all methods, we use $2^{24}$ Monte Carlo paths and a common time discretization of $160$ steps; for the multifactor approximation, we use $3$ factors. The reference slice is computed by Fourier inversion of the rough Heston characteristic function.

The resulting smile overlays are shown in Figure~\ref{fig:iv_slice_panel_alpha}. In each panel, the overall smile shape is captured well, but the slice-level accuracy differs materially across methods and roughness levels. The corresponding maximum absolute slice errors and wall-clock times are reported in Table~\ref{tab:iv_slice_panel_alpha}.

\begin{figure}[ht]
    \centering
    \includegraphics[width=0.92\textwidth]{./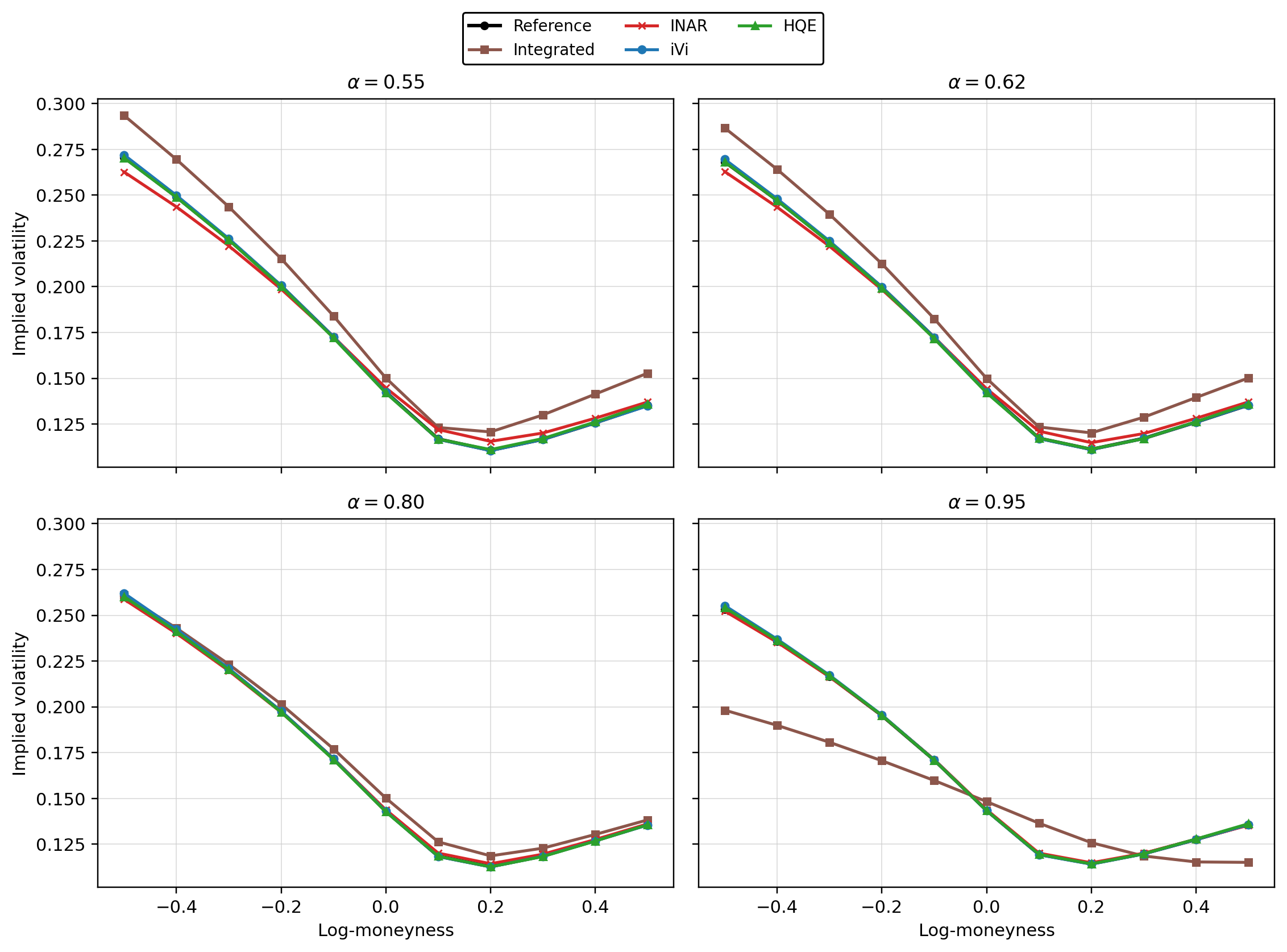}
    \caption{Implied-volatility slice comparison for $\alpha\in\{0.55,0.62,0.80,0.95\}$ at maturity $T=1$. In each panel, the black curve is the Fourier reference smile, the red curve is the INAR($\infty$)-CDQ-FFT slice, the blue and green curves are the iVi and HQE weak-scheme benchmarks, and the brown curve is the $3$-factor integrated multifactor Euler approximation. All methods use $2^{24}$ Monte Carlo paths.}
    \label{fig:iv_slice_panel_alpha}
\end{figure}

\begin{table}[ht]
\centering
\caption{Slice-level comparison for the multi-$\alpha$ experiment in Figure~\ref{fig:iv_slice_panel_alpha}. Each entry reports the maximum absolute IV-slice error on the 11-point log-moneyness grid, followed by the wall-clock time in seconds.}
\label{tab:iv_slice_panel_alpha}
\small
\setlength{\tabcolsep}{5pt}
\begin{tabular}{ccccc}
\toprule
$\alpha$ & Integrated (3 factors) & INAR-CDQ-FFT & iVi & HQE \\
\midrule
0.55 & $0.0229$ ($78.9$s) & $0.0080$ ($190.9$s) & $0.0014$ ($64.6$s) & $0.0006$ ($56.8$s) \\
0.62 & $0.0185$ ($28.5$s) & $0.0053$ ($85.0$s)  & $0.0014$ ($61.6$s) & $0.0006$ ($68.1$s) \\
0.80 & $0.0078$ ($27.1$s) & $0.0020$ ($111.7$s) & $0.0013$ ($67.6$s) & $0.0006$ ($76.3$s) \\
0.95 & $0.0550$ ($26.7$s) & $0.0009$ ($110.4$s) & $0.0021$ ($63.1$s) & $0.0010$ ($77.8$s) \\
\bottomrule
\end{tabular}
\end{table}

Several conclusions emerge from Figure~\ref{fig:iv_slice_panel_alpha} and Table~\ref{tab:iv_slice_panel_alpha}. First, in this experiment and at the common step budget used here, the specialized weak schemes HQE and iVi produce the smallest European-slice errors for the rougher cases $\alpha=0.55$ and $\alpha=0.62$, with HQE delivering the smallest error among the methods tested. Second, under this parameter set, the INAR($\infty$) approximation becomes more competitive as $\alpha$ increases: at $\alpha=0.80$ its slice error is close to the iVi/HQE benchmarks, and at $\alpha=0.95$ its maximum IV-slice error is slightly smaller than that of HQE and smaller than that of iVi. This is consistent with the refined weak-error decomposition, in which the coefficient of the discrete-to-continuous Volterra branch vanishes as $\alpha\uparrow1^-$. Under the present proof, one obtains at best an $\mathcal O(1-\alpha)$ vanishing rate, so that for moderate $\tau$ the $\tau^{-\alpha}$ branch may dominate the observed error. Third, the coarse three-factor integrated approximation is visibly less accurate in this experiment, especially near $\alpha=1$, which is consistent with a deterioration of the underlying low-factor kernel approximation in that regime. More precisely, when $\alpha=0.95$ (so $H=\alpha-\tfrac12=0.45$), the Volterra kernel is already close to the constant Heston kernel, while the present three-factor approximation still represents it only through a small number of strictly decaying exponential modes; in the absence of a zero-decay mode, this low-factor kernel approximation can become noticeably less accurate in that near-Heston regime.

These slice experiments therefore clarify the numerical position of the proposed scheme. On European implied-volatility slices, the INAR($\infty$) simulator is not uniformly the most efficient competitor against current specialized weak schemes. However, its slice accuracy improves substantially as the roughness parameter approaches the classical regime, while still relying on the same microstructural simulator that also handles the path-dependent contracts studied above.


\subsection{Non-rough case: Heston model \texorpdfstring{$(\alpha=1)$}{(alpha=1)}}

To validate our implementation, we compare the INAR($\infty$) scheme with the classical Heston benchmark at $\alpha=1$, using the degenerate Markovian kernel from Remark~\ref{rem:alpha-one-kernel}.

Since the closed-form solution for the European option is available for the classical Heston model, we can directly compare the results from the INAR($\infty$) implementation to this benchmark. The results are given in Table~\ref{tab:european_comparison}. We choose $\tau=320$, $S_0=100$ and $K=80,90,100,110,120$ for all cases. 

\begin{table}[ht]
  \centering
  \caption{Comparison of European option prices between the closed-form solution and the INAR($\infty$) implementation}
  \small
  \setlength{\tabcolsep}{5pt}
  \begin{tabular*}{\textwidth}{@{\extracolsep{\fill}}lcccccc@{}}
  \toprule
  \multirow{2}{*}{Strike} & \multicolumn{2}{c}{Closed-form solution} & \multicolumn{4}{c}{%
  INAR($\infty$)} \\
  \cmidrule(lr){2-3}\cmidrule(lr){4-7}
   & Call & Put & Call & 95\% CI & Put & 95\% CI \\
  \midrule
  80  & 21.8822 & 1.8822 & 21.8834 & [21.8265, 21.9402] & 1.8752 & [1.8612, 1.8893] \\
  90  & 14.6187 & 4.6187 & 14.6202 & [14.5704, 14.6700] & 4.6121 & [4.5889, 4.6353] \\
  100 & 9.0983  & 9.0983  & 9.0942  & [9.0531, 9.1354]  & 9.0861  & [9.0530, 9.1192] \\
  110 & 5.2883  & 15.2883 & 5.2811  & [5.2490, 5.3131]  & 15.2729 & [15.2306, 15.3152] \\
  120 & 2.8849  & 22.8849 & 2.8807  & [2.8569, 2.9045]  & 22.8725 & [22.8227, 22.9223] \\
  \bottomrule
  \end{tabular*}
  \label{tab:european_comparison}
\end{table}

\subsubsection{Classical-Heston benchmark details}

The classical Heston benchmark is simulated on a discrete time grid with time step
$\Delta t = T/\tau = 1/320$. For each path, the asset price $S$ and variance $V$
are discretized as
\begin{align*}
    S_{i+1} &= S_i + r S_i \Delta t + \sqrt{\max(V_i, 0)} S_i \sqrt{\Delta t} Z_1, \\
    V_{i+1} &= \max\left(0, V_i + \gamma(\theta - V_i)\Delta t + \nu \sqrt{\max(V_i, 0)} \sqrt{\Delta t} Z_2\right),
\end{align*}
where $i=0, \dots, \tau-1$, and $(Z_1, Z_2)$ are correlated standard normal
random variables with correlation $\rho$. This corresponds to a diffusion-truncation
scheme with terminal projection: we use $\sqrt{\max(V_i,0)}$ in the diffusion
term and finally set $V_{i+1}\leftarrow \max\{V_{i+1},0\}$. To reduce Monte Carlo
variance, we employ antithetic variates. For each pair $(Z_1,Z_2)$, we simulate
a second path with $(-Z_1,-Z_2)$ and use the average payoff of the two paths.
For efficiency, the benchmark accumulates path summaries (terminal value,
running average, running extrema, and barrier-hit indicators) on the fly and
prices all strikes/payoffs in a single Monte Carlo pass. All benchmark prices
reported in the following use 500,000 paths, matching the INAR($\infty$) runs.

\begin{table}[ht]
  \centering
  \caption{Comparison of Asian option prices between the Euler--Maruyama benchmark and the INAR($\infty$) implementation.}
  \small
  \setlength{\tabcolsep}{5pt}
  \begin{tabular*}{\textwidth}{@{\extracolsep{\fill}}lcccccc@{}}
  \toprule
  \multirow{2}{*}{Strike} & \multicolumn{2}{c}{Euler--Maruyama} & \multicolumn{4}{c}{INAR($\infty$)} \\
  \cmidrule(lr){2-3}\cmidrule(lr){4-7}
   & Call & Put & Call & 95\% CI & Put & 95\% CI \\
  \midrule
  80  & 20.1541 & 0.1557 & 20.1648 & [20.1312, 20.1983] & 0.1570 & [0.1541, 0.1598] \\
  90  & 11.2797 & 1.2813 & 11.2907 & [11.2611, 11.3202] & 1.2829 & [1.2737, 1.2920] \\
  100 & 4.9135  & 4.9151  & 4.9280  & [4.9066, 4.9494]  & 4.9202  & [4.9016, 4.9388] \\
  110 & 1.6174  & 11.6190 & 1.6293  & [1.6167, 1.6418]  & 11.6215 & [11.5944, 11.6485] \\
  120 & 0.4044  & 20.4059 & 0.4129  & [0.4068, 0.4190]  & 20.4051 & [20.3733, 20.4369] \\
  \bottomrule
  \end{tabular*}
  \label{tab:asian_comparison}
\end{table}

\begin{table}[ht]
  \centering
  \caption{Comparison of lookback option prices between the Euler--Maruyama benchmark and the INAR($\infty$) implementation.}
  \small
  \setlength{\tabcolsep}{5pt}
  \begin{tabular*}{\textwidth}{@{\extracolsep{\fill}}lcccccc@{}}
  \toprule
  \multirow{2}{*}{Strike} & \multicolumn{2}{c}{Euler--Maruyama} & \multicolumn{4}{c}{INAR($\infty$)} \\
  \cmidrule(lr){2-3}\cmidrule(lr){4-7}
   & Call & Put & Call & 95\% CI & Put & 95\% CI \\
  \midrule
  80  & 38.4117 & 3.3814 & 38.4427 & [38.3980, 38.4874] & 3.3807 & [3.3630, 3.3984] \\
  90  & 28.4117 & 8.3496 & 28.4427 & [28.3980, 28.4874] & 8.3549 & [8.3279, 8.3819] \\
  100 & 18.4117 & 16.5131 & 18.4427 & [18.3980, 18.4874] & 16.5222 & [16.4900, 16.5544] \\
  110 & 10.4984 & 26.5131 & 10.5300 & [10.4903, 10.5697] & 26.5222 & [26.4900, 26.5544] \\
  120 & 5.6284  & 36.5131 & 5.6523  & [5.6211, 5.6834]  & 36.5222 & [36.4900, 36.5544] \\
  \bottomrule
  \end{tabular*}
  \label{tab:lookback_comparison}
\end{table}

\begin{table}[htb]
  \centering
  \caption{Comparison of barrier option prices between the Euler--Maruyama benchmark and the INAR($\infty$) implementation (up-barrier $=110$, down-barrier $=90$).}
  \small
  \setlength{\tabcolsep}{5pt}
  \begin{tabular*}{\textwidth}{@{\extracolsep{\fill}}lccc@{}}
  \toprule
  \multirow{2}{*}{Strike} & \multicolumn{1}{c}{Euler--Maruyama} & \multicolumn{2}{c}{INAR($\infty$)} \\
  \cmidrule(lr){2-2}\cmidrule(lr){3-4}
   & Up-In Call $\left(B=110\right)$ & Up-In Call $\left(B=110\right)$ & 95\% CI \\
  \midrule
  80  & 19.4200 & 19.4397 & [19.3788, 19.5006] \\
  90  & 13.8070 & 13.8234 & [13.7725, 13.8743] \\
  100 & 8.9723  & 8.9829  & [8.9416, 9.0241] \\
  110 & 5.2734  & 5.2811  & [5.2490, 5.3131] \\
  120 & 2.8737  & 2.8807  & [2.8569, 2.9045] \\
  \bottomrule
  \end{tabular*}
  \vspace{0.3cm}

  \begin{tabular*}{\textwidth}{@{\extracolsep{\fill}}lccc@{}}
  \toprule
  \multirow{2}{*}{Strike} & \multicolumn{1}{c}{Euler--Maruyama} & \multicolumn{2}{c}{INAR($\infty$)} \\
  \cmidrule(lr){2-2}\cmidrule(lr){3-4}
   & Down-Out Put $\left(B=90\right)$ & Down-Out Put $\left(B=90\right)$ & 95\% CI \\
  \midrule
  80  & 0.0000 & 0.0000 & [0.0000, 0.0000] \\
  90  & 0.0000 & 0.0000 & [0.0000, 0.0000] \\
  100 & 0.1495  & 0.1510  & [0.1485, 0.1535] \\
  110 & 0.9262  & 0.9308  & [0.9222, 0.9393] \\
  120 & 2.5193  & 2.5223  & [2.5054, 2.5392] \\
  \bottomrule
  \end{tabular*}
  \label{tab:barrier_comparison}
\end{table}

The results indicate that, in this experiment, the optimized INAR($\infty$)
implementation tracks the classical Heston benchmark closely when $\alpha=1$.
European and Asian option prices are numerically close across the two
implementations, and the lookback and barrier prices remain similarly close.

Thus, in the pure Heston regime the specialized Euler benchmark is faster: for the full option set reported above, the optimized Euler--Maruyama benchmark requires about 2.4 seconds, compared with about 13.1 seconds for the optimized INAR($\infty$) implementation. Beyond this timing comparison, our framework retains the practical advantage of using the same simulator architecture in both rough and classical volatility regimes. This unified treatment is primarily useful when one wants to compare different volatility assumptions within a single implementation framework, rather than to claim a uniform advantage for European pricing in the classical case.


\subsection{Analysis of the implied volatility surface}

To validate the INAR($\infty$)-based simulator, we examine the implied-volatility (IV) surface produced by our method. A widely discussed feature in the rough-volatility literature is that rough models generate a steeper short-maturity at-the-money (ATM) skew than classical models, often together with an approximate power-law term structure over some maturity ranges. However, the empirical status of a simple power-law ATM skew in equity markets is more nuanced: recent studies show that such a behavior is not uniformly supported across maturities, and rough models need not outperform non-rough or Markovian alternatives when the entire implied-volatility surface is taken into account; see, for example, \cite{guyon2022does,abi2025volatility}. Accordingly, the numerical experiment below should be interpreted as a model-implied diagnostic for the rough Heston specification used in this paper, rather than as evidence that a universal market stylized fact is a pure power law.

Following the simulation setup in Section~\ref{sec:numerical}, we price European calls on a grid of maturities and log-moneyness $k=\log(K/S_0)\in[-0.2,0.2]$. For each $(T,k)$ we invert the Black-Scholes formula to obtain IVs. Figure~\ref{fig:iv_surface} reports the resulting surface for the rough specification ($\alpha=0.62$), computed with the same path count and time step used throughout the study. We consider maturities on the interval $[0,1]$, sampled at monthly grid points.

\begin{figure}[ht]
    \centering
    \includegraphics[width=0.7\textwidth]{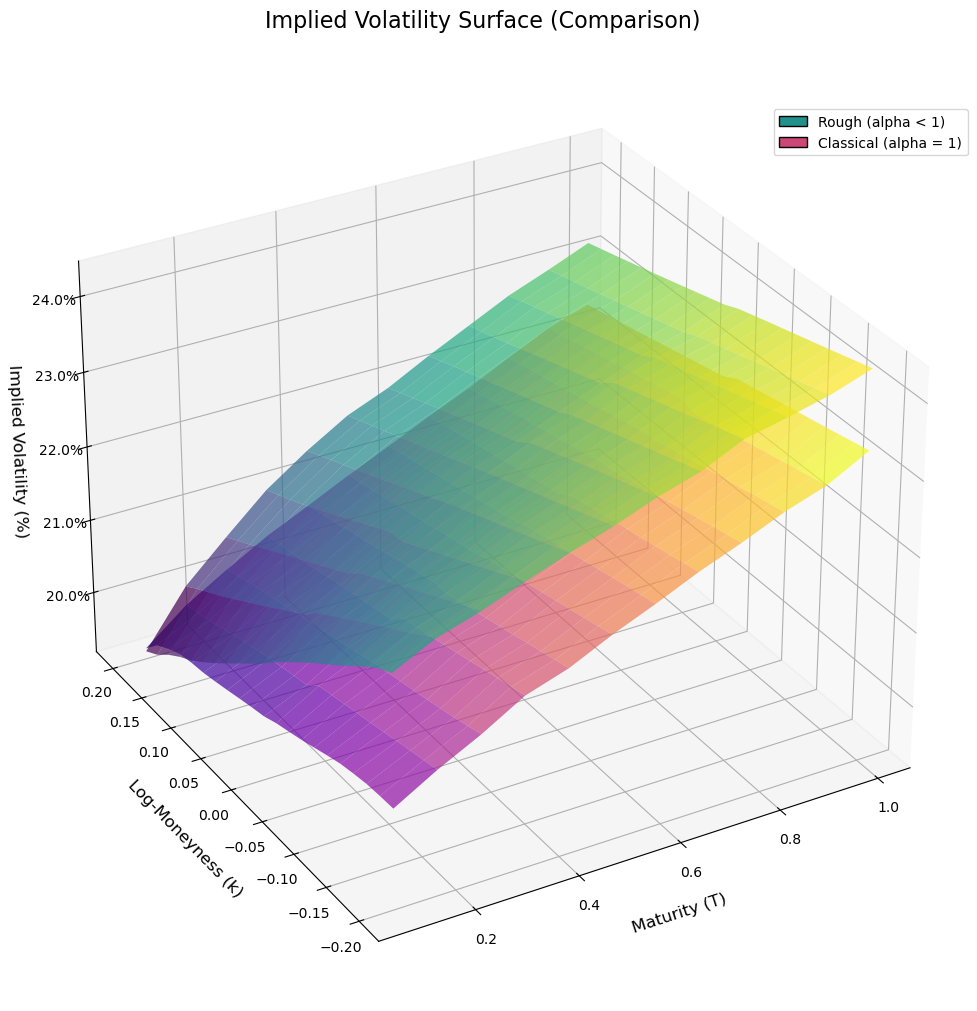}
    \caption{Implied-volatility surface produced by the roughness specification ($\alpha=0.62$) with the INAR($\infty$)-FFT simulator. Grid: $T\in\{1/12,2/12,3/12,...,1\}$, $k\in[-0.2,0.2]$. All runs use a fixed time step ($\tau=320$ per year) and $10^6$ paths.}
    \label{fig:iv_surface}
\end{figure}

\subsubsection{ATM-skew diagnostics}

To make this model-implied behavior more explicit, Figure~\ref{fig:atm_skew_panel} (a) traces the finite-difference ATM skew across maturities, while Figure~\ref{fig:atm_skew_panel} (b) shows the corresponding ATM implied volatilities. In our rough Heston specification, the skew steepens as maturity shrinks, whereas the ATM level follows a smoother upward term structure. Fitting a power law to the absolute skew (Figure~\ref{fig:atm_skew_panel}, panel (c)) yields
$|\mathrm{skew}(T)|\approx cT^{-0.369}$ with $R^2\approx0.850$, which is consistent with the short-end pattern generated by the rough specification with $\alpha=0.62$ (so that $H=\alpha-\frac12\approx0.12$). We stress, however, that this is a property of the model under our chosen parameters, not a claim that observed market ATM skews universally follow a pure power law; see \cite{guyon2022does,abi2025volatility} for a related empirical discussion.

\begin{figure}[ht]
    \centering
    \includegraphics[width=0.88\textwidth]{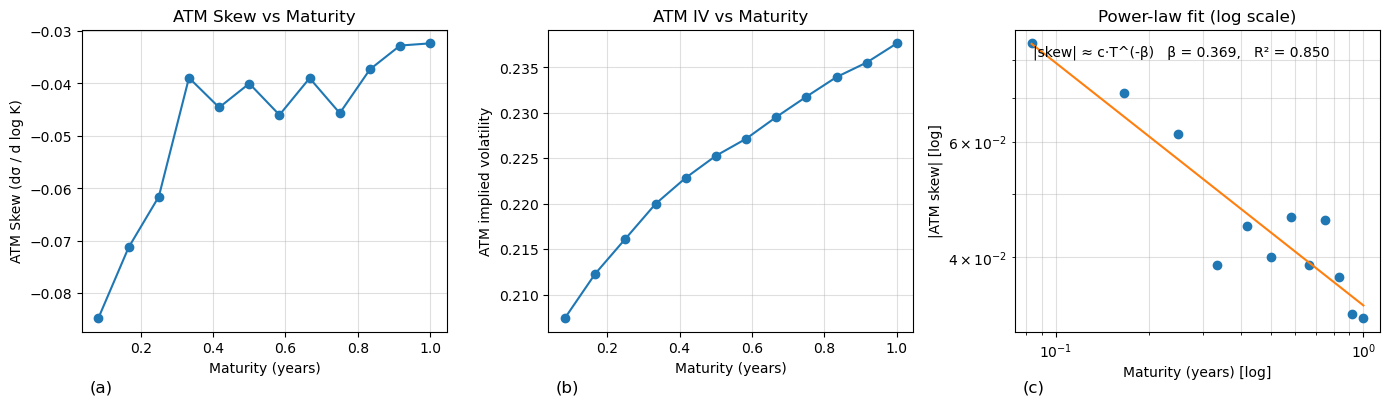}
    \caption{ATM diagnostics under the rough specification ($\alpha=0.62$). Panel (a) shows the steepening skew toward short maturities; panel (b) reports the smoother ATM level term structure; panel (c) illustrates the approximate power-law decay $|\mathrm{skew}(T)|\propto T^{H-1/2}$.}
    \label{fig:atm_skew_panel}
\end{figure}

Overall, under the chosen parameter set, the INAR($\infty$) approximation reproduces the familiar short-maturity rough-volatility patterns within a unified simulation framework. We interpret this as a model-implied diagnostic of the rough specification used here, rather than as evidence of a universal empirical law or of uniform superiority over specialized European-pricing schemes.


\section{Conclusion}\label{sec:conclusion}

We introduce a bivariate cumulative heavy-tailed INAR($\infty$) framework whose nearly unstable scaling limit is the rough Heston model of \cite{el2019characteristic}, extending to the bivariate price--variance setting the discrete cumulative INAR scaling picture developed in \cite{wang2026scaling}. This provides a discrete-time microstructural interpretation of the joint variance-price dynamics and makes the link between the asymmetry of the microscopic order-flow mechanism and the macroscopic leverage effect explicit through a concrete parameter mapping.

On the pricing side, we derived the exact finite-$\tau$ transform recursion, reduced it in the diffusive regime to a quadratic discrete Volterra equation, and compared it with the continuous rough Heston fractional Riccati equation. The resulting weak-error analysis is local in frequency: under the admissible-strip assumption stated in Section~\ref{sec:weak-euro}, we obtain quantitative bounds for the truncated Carr--Madan pricing functional on bounded frequency windows of the form $C_1\tau^{-\alpha}+C_2(\alpha)\tau^{-(1-\alpha)}$, where $C_2(\alpha)\to0$ as $\alpha\uparrow1^-$. This should be viewed as a rigorous low-frequency pricing result rather than as a complete weak-error theory for full European call prices, since the high-frequency tail of the inversion integral is not analyzed in the present paper.

From a computational perspective, the INAR($\infty$) representation leads to a unified simulation architecture for both European and path-dependent options. The CDQ--FFT implementation reduces the long-memory convolution cost to $\mathcal O(\tau\log^2\tau)$ per path, and the numerical experiments show that the resulting scheme is numerically stable across several payoff classes under the reported parameter sets, remains consistent with the classical Heston benchmark when $\alpha=1$, and reproduces the qualitative implied-volatility features generated by the chosen rough specification. At the same time, the experiments also indicate that on European implied-volatility slices our method should be viewed as complementary to specialized weak schemes rather than as a uniform replacement for them.

Several directions remain open. On the analytical side, it would be desirable to derive the common transform strip directly from the discrete dynamics and to extend the local weak-error result to full Fourier-based European pricing by adding explicit high-frequency tail estimates. On the numerical side, natural next steps include calibration, variance reduction, and statistical inference for the underlying microstructural INAR model, as well as a broader comparison with specialized rough-volatility schemes across different parameter regimes.

\section*{Acknowledgments}

The authors would like to thank the AE and two anonymous referees for helpful comments. Lingjiong Zhu is partially supported by the grants NSF DMS-2053454, NSF DMS-2208303. 

\bibliographystyle{plainnat}
\bibliography{bibtex}

\appendix

\section*{Overview}
These appendices collect the longer proofs and technical bridge results referenced in the main paper.

\section{Proofs for the Scaling-Limit Section}
\label{supp:scaling-proofs}

Throughout Appendices~\ref{supp:scaling-proofs} and \ref{supp:weak-error-proofs}, we use the following asymptotic notation, with the relevant limiting regime specified by the context. We write $f \sim g$ to denote asymptotic equivalence, i.e., $f/g\to 1$ in the stated limit. We use $f \lesssim g$ to indicate that $f \le C g$ for some positive constant $C$ independent of the asymptotic parameter in the stated regime. We write $x \asymp y$ to denote that there exist constants $0<c<C<\infty$
such that $cy \le x \le Cy$ uniformly in the stated regime.


\subsection{Lemma~\ref{xu2023}}

\begin{lemma}[A $C$-tightness criterion for c\`adl\`ag processes, {\cite[Lemma~3.5]{horst2023convergence}}]
\label{xu2023}
If
$\sup_{n\ge1}\mathbb{E}\left[\left|X_0^{(n)}\right|^{q_0}\right]<\infty$
for some $q_0>0$, then for every fixed horizon $T>0$ the sequence
$\left(X^{(n)}\right)_{n\ge1}$ is $C$-tight on $[0,T]$ if there exists a
constant $\vartheta>2$ such that the following two conditions hold.
\begin{enumerate}[(i)]
  \item
  \[
    \sup_{k=0,1,\ldots,\lfloor Tn^\vartheta\rfloor}\sup_{h\in[0,1/n^\vartheta]}
    \left|\Delta_hX_{k/n^\vartheta}^{(n)}\right|
    \rightarrow 0
    \quad\text{in probability as } n\rightarrow\infty.
  \]
  \item There exist constants $C>0$, $p\ge1$, $m\in\{1,2,\ldots\}$ and pairs
  $\{(a_i,b_i)\}_{i=1}^m$ satisfying
  \[
    a_i\ge0,\qquad b_i>0,\qquad
    \varrho_*:=\min_{1\le i\le m}\{b_i+a_i/\vartheta\}>1,
  \]
  such that for all $n\ge1$ and $h\in(0,1)$,
  \[
    \sup_{t\in[0,T]}\mathbb{E}\left[\left|\Delta_h X_t^{(n)}\right|^p\right]
    \le C\cdot\sum_{i=1}^m\frac{h^{b_i}}{n^{a_i}}.
  \]
\end{enumerate}
\end{lemma}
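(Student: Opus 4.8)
The plan is to prove the three assertions in turn: $\mathbb C$-tightness via Lemma \ref{xu2023}, the uniform estimate $\sup_t\norm{\bm\Lambda_t^\tau-\mathbf Y_t^\tau}_1\to0$ by a Doob inequality, and the identification of the limiting martingale structure by passing to the limit in the discrete Doob--Meyer decomposition. Everything hinges on the identity $\mathbf Y_t^\tau-\bm\Lambda_t^\tau=\tfrac{1-a_\tau}{\tau^\alpha\mu}\mathbf M_{\lfloor t\tau\rfloor}^\tau$, so that $\mathbf Z_t^\tau=\sqrt{\tfrac{1-a_\tau}{\tau^\alpha\mu}}\,\mathbf M_{\lfloor t\tau\rfloor}^\tau$, which realizes all three processes as functionals of the scaled discrete $(\mathcal F_{\lfloor\cdot\tau\rfloor}^\tau)$-martingale $\mathbf M^\tau$ whose predictable bracket is explicit: $\langle Z^{\tau,\pm}\rangle_t=\Lambda_t^{\tau,\pm}$, while $\langle Z^{\tau,+},Z^{\tau,-}\rangle\equiv0$ because $X_n^{\tau,+}$ and $X_n^{\tau,-}$ are conditionally independent given $\mathcal F_{n-1}$. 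Since $\chi\binom{a}{b}=\tfrac{a+\beta b}{\beta+1}\binom{1}{1}$, one has $\lambda_n^{\tau,+}=\lambda_n^{\tau,-}$ and the intensity obeys the scalar discrete renewal equation \eqref{lambdaT+expression}, so the intensity analysis is exactly the univariate one of \cite{wang2024scaling}; the only genuinely two-dimensional object is the martingale pair $\mathbf Z^\tau$.

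\textbf{Step 1: uniform a priori bounds (the analytic core).} I would first establish, uniformly in $\tau$, the renewal-kernel bound $\sum_{j=1}^m\psi_j^\tau\le Cm^\alpha$ for $1\le m\le\tau$ (equivalently $\psi_j^\tau\lesssim j^{\alpha-1}$ on this range) --- the discrete, near-critical, infinite-mean analogue of the Mittag-Leffler kernel estimate --- from $\norm{\varphi^\tau}_1=a_\tau=1-\gamma\tau^{-\alpha}$, the heavy tail $n^\alpha\sum_{k>n}\varphi_k\to1/\Gamma(1-\alpha)$, and the renewal-theoretic arguments of \cite{wang2024scaling} (after \cite{jaisson2016rough}), together with $\hat\mu_\tau(n)\le C\tau^{2\alpha-1}$ for $1\le n\le\tau$ and the sharper $\hat\mu_\tau(n)\asymp\tau^{2\alpha-1}n^{-\alpha}$ for $n\ge2$, read off from Definition \ref{definition1}. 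Inserting these into $\mathbb E[\lambda_n^{\tau,+}]=\hat\mu_\tau(n)+(\psi^\tau*\hat\mu_\tau)(n)$ gives $\mathbb E[\lambda_n^{\tau,+}]\le C\tau^{2\alpha-1}$ for $n\le\tau$, hence, after the $\tfrac{1-a_\tau}{\tau^\alpha\mu}$-rescaling, $\mathbb E[Y_{t+h}^{\tau,\pm}-Y_t^{\tau,\pm}]\le C(h+\tau^{-1})$ and $\sup_{t\le1}\mathbb E[Y_t^{\tau,\pm}]=\sup_{t\le1}\mathbb E[\Lambda_t^{\tau,\pm}]\le C$. Iterating \eqref{lambdaT+expression} and applying Burkholder--Davis--Gundy to the $\psi^\tau$-weighted martingale sums upgrades this to $L^p$ bounds for every $p$: schematically $\mathbb E[|\lambda_n^{\tau,+}|^p]\le C_p\tau^{(2\alpha-1)p}$, $\mathbb E[|Y_{t+h}^{\tau,\pm}-Y_t^{\tau,\pm}|^p]\le C_p(h^p+h^{p/2}\tau^{-\alpha p}+\tau^{-p}+\cdots)$ with the same for $\bm\Lambda^\tau$, and $\mathbb E[|Z_{t+h}^{\tau,\pm}-Z_t^{\tau,\pm}|^p]\le C_p(h^{p/2}+\tau^{-p/2}+\cdots)$, all uniformly in $t\in[0,1]$ and $\tau$. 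The sharp $\psi_j^\tau\lesssim j^{\alpha-1}$ bound is essential here: the crude $\norm{\psi^\tau}_1=O(\tau^\alpha)$ estimate degrades the leading $h^p$ into $h^{p(1-\alpha)}$ and kills the tightness threshold. As a by-product, Doob's $L^2$-inequality gives $\mathbb E[\sup_{t\le1}\norm{\bm\Lambda_t^\tau-\mathbf Y_t^\tau}_1^2]\le C\tfrac{1-a_\tau}{\tau^\alpha\mu}\mathbb E[\Lambda_1^{\tau,+}]\to0$ (since $\tfrac{1-a_\tau}{\tau^\alpha\mu}=\tfrac{\gamma}{\mu\tau^{2\alpha}}\to0$), proving the second assertion.

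\textbf{Step 2: $\mathbb C$-tightness via Lemma \ref{xu2023}.} I would apply Lemma \ref{xu2023} along the sequence indexed by $\tau$ over the horizon $[0,1]$; its initial-value hypothesis is vacuous because $\bm\Lambda_0^\tau=\mathbf Y_0^\tau=\mathbf Z_0^\tau=0$. For condition (1): as $\theta>2>1$, an interval of length $\le\tau^{-\theta}$ meets at most one jump time $j/\tau$ of the piecewise-constant processes, so the quantity in (1) is dominated by $\tfrac{1-a_\tau}{\tau^\alpha\mu}\max_{j\le\tau}X_j^{\tau,+}$, $\tfrac{1-a_\tau}{\tau^\alpha\mu}\max_{j\le\tau}\lambda_j^{\tau,+}$ and $\sqrt{\tfrac{1-a_\tau}{\tau^\alpha\mu}}\max_{j\le\tau}|X_j^{\tau,+}-\lambda_j^{\tau,+}|$; since $1/2<\alpha<1$ gives $2\alpha-1<\alpha<2\alpha$, a union bound with the $L^p$ moment bounds of Step 1 shows each maximum is $o_p(\tau^\alpha)$, hence each of these tends to $0$ in probability. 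For condition (2), the $L^p$ increment bounds of Step 1 supply, for each component, pairs $(a_i,b_i)$ --- namely $(0,p),(\alpha p,p/2),(p,0)$ for $\mathbf Y^\tau$ and $\bm\Lambda^\tau$, and $(0,p/2),(p/2,0)$ for $\mathbf Z^\tau$ --- and choosing $\theta\in(2,3)$ and then $p>\max\{2\theta,(1-\alpha)^{-1}\}$ makes $\rho:=\min_i\{b_i+a_i/\theta\}>1$; this is exactly where $\theta>2$ is needed, the martingale increments contributing the $h^{1/2}$-scaling. Thus $(\bm\Lambda^\tau,\mathbf Y^\tau,\mathbf Z^\tau)$ is $\mathbb C$-tight.

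\textbf{Step 3: the limit point, and the main obstacle.} If $(\bm\Lambda,\mathbf Y,\mathbf Z)$ is a limit point (existing by $\mathbb C$-tightness, with continuous paths), then $\mathbf Z^\tau$ being a $(\mathcal F_{\lfloor\cdot\tau\rfloor}^\tau)$-martingale with $\sup_\tau\mathbb E[\norm{\mathbf Z_1^\tau}^{2+\delta}]<\infty$ (Step 1) makes $\{\mathbf Z_1^\tau\}$ and $\{\mathbf Z_1^\tau(\mathbf Z_1^\tau)^\top\}$ uniformly integrable; passing the vector martingale relation and the matrix relation for $\mathbf Z^\tau(\mathbf Z^\tau)^\top-\langle\mathbf Z^\tau\rangle$ through the weak limit (via a Skorokhod coupling, using that $\mathbb C$-tightness precludes fixed discontinuities) shows $\mathbf Z$ and $\mathbf Z\mathbf Z^\top-A$ are continuous martingales in the limit filtration, where $A=\lim\langle\mathbf Z^\tau\rangle$. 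But $\langle\mathbf Z^\tau\rangle_t=\mathrm{diag}(\bm\Lambda_t^\tau)$ exactly, and $\bm\Lambda^\tau\to\mathbf Y=\bm\Lambda$ uniformly, so $A=\mathrm{diag}(\mathbf Y)$; uniqueness of the Doob--Meyer decomposition and continuity of $\mathbf Z$ then give $[\mathbf Z,\mathbf Z]=\langle\mathbf Z,\mathbf Z\rangle=\mathrm{diag}(\mathbf Y)$, the off-diagonal vanishing as the limit of $\langle Z^{\tau,+},Z^{\tau,-}\rangle\equiv0$. The genuine difficulty throughout is Step 1 --- the sharp uniform control of the near-critical infinite-mean renewal kernel $\psi^\tau$ and its propagation, via \eqref{lambdaT+expression} and BDG, through the self-exciting recursion to $L^p$ increment bounds whose $h$-exponents clear the thresholds imposed by $\theta>2$; this is where $1/2<\alpha<1$, the precise $\hat\mu_\tau$ of Definition \ref{definition1}, and the normalisation $n^\alpha\sum_{k>n}\varphi_k\to1/\Gamma(1-\alpha)$ all enter, whereas the bivariate matrix $\chi$ is pure bookkeeping since $\lambda^{\tau,+}=\lambda^{\tau,-}$ reduces the intensity recursion to the scalar case of \cite{wang2024scaling}.
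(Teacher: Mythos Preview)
Your proposal does not address the stated Lemma~\ref{xu2023} at all. That lemma is an abstract $\mathbb C$-tightness criterion for a generic sequence $(X^{(n)})_{n\ge1}$ of c\`adl\`ag processes, quoted from \cite{horst2023convergence}; the paper does not prove it and simply cites it as a tool. What you have written is a proof sketch of Proposition~\ref{proposition1} --- the $\mathbb C$-tightness of the specific triple $(\bm\Lambda^\tau,\mathbf Y^\tau,\mathbf Z^\tau)$ --- which \emph{invokes} Lemma~\ref{xu2023} rather than establishes it. Your opening sentence (``$\mathbb C$-tightness via Lemma~\ref{xu2023}'') makes this explicit. As a proof of the lemma itself, the proposal is therefore vacuous.

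If your intended target was actually Proposition~\ref{proposition1}, then your overall architecture matches the paper's, but your implementation of condition~(2) of Lemma~\ref{xu2023} is substantially heavier than necessary. The paper dispatches it for $\mathbf Y^\tau$ and $\bm\Lambda^\tau$ with the minimal choice $p=1$, a single pair $(a_1,b_1)=(1,1)$, and only the first-moment bound $\mathbb E[\lambda_n^{\tau,+}]\lesssim\tau^{2\alpha-1}$, which already yields $\rho=1+1/\theta>1$; no sharp pointwise estimate $\psi_j^\tau\lesssim j^{\alpha-1}$ and no BDG-driven $L^p$ bootstrap are required. For $\mathbf Z^\tau$ the paper does not verify Lemma~\ref{xu2023} at all: it instead shows directly that $\langle Z^{\tau,+}\rangle_t-\Lambda_t^{\tau,+}\to0$ and $[Z^{\tau,+}]_t-Y_t^{\tau,+}\to0$ by explicit third-moment computations on $\lambda^{\tau,+}$ (via the Mittag--Leffler asymptotics for $\psi^\tau$), and then appeals to Theorem~VI-6.26 of \cite{jacod2013limit}. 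Your route via high-$p$ increment bounds and uniform integrability would work, but the claim that the sharp kernel bound is ``essential'' is wrong for the paper's argument; the crude $\norm{\psi^\tau}_1=a_\tau/(1-a_\tau)$ suffices for everything the paper actually does.
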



\subsection{Lemma~\ref{lem:kernel_convergence}}\label{app:kernel_convergence}

\begin{lemma}[Weak convergence of the renewal kernel]
\label{lem:kernel_convergence}
Let $(\psi_n^\tau)_{n\ge1}$ be the sequence of discrete renewal kernels associated
with $\varphi^\tau=a_\tau\varphi$, and define
\[
  \mathsf S^\tau(t)
  :=
  \frac{1-a_\tau}{a_\tau}\tau\,\psi^\tau_{\lfloor t\tau\rfloor},
  \qquad t\ge0.
\]
Then, as $\tau\to\infty$, the measure $\mathsf S^\tau(t)\,dt$ converges weakly on
$[0,\infty)$ to the probability measure with density
\[
  f_{\alpha,\gamma}(t)
  :=
  \gamma t^{\alpha-1}E_{\alpha,\alpha}(-\gamma t^\alpha),
  \qquad t>0.
\]
\end{lemma}

\begin{proof}

Fix $\alpha\in(\frac12,1)$, $\gamma>0$, and $z>0$. Let $(\varphi_n)_{n\ge1}$ be a probability kernel on $\mathbb N$ satisfying
\[
\sum_{n\ge1}\varphi_n=1,
\qquad
\overline\varphi(n):=\sum_{k>n}\varphi_k
\sim \frac{1}{\Gamma(1-\alpha)}n^{-\alpha}
\qquad (n\to\infty).
\]
For each $\tau>0$, let $a_\tau\in(0,1)$ be such that
\[
\tau^\alpha(1-a_\tau)\to\gamma,
\]
and define
\[
\varphi_n^\tau:=a_\tau\varphi_n,\qquad n\ge1.
\]
For $n\ge1$, let
\[
\psi_n^\tau:=\sum_{k\ge1}(\varphi^\tau)^{*k}_n
=\sum_{k\ge1}a_\tau^k(\varphi^{*k})_n
\]
be the discrete renewal kernel associated with $\varphi^\tau$, and set
$\psi_0^\tau:=0$.
Define the piecewise-constant function
\[
\mathsf S^\tau(t):=\frac{1-a_\tau}{a_\tau}\tau\,\psi^\tau_{\lfloor t\tau\rfloor},
\qquad t\ge0.
\]
Since $\psi_0^\tau=0$, this means $\mathsf S^\tau(t)=0$ on $[0,1/\tau)$.

We first check that $\mathsf S^\tau(t)\,dt$ is a probability measure on $[0,\infty)$. Indeed,
\begin{align*}
\int_0^\infty \mathsf S^\tau(t)\,dt
=
\sum_{n\ge0}\int_{n/\tau}^{(n+1)/\tau}\frac{1-a_\tau}{a_\tau}\tau\,\psi_n^\tau\,dt
=
\frac{1-a_\tau}{a_\tau}\sum_{n\ge0}\psi_n^\tau
=
\frac{1-a_\tau}{a_\tau}\sum_{n\ge1}\psi_n^\tau.
\end{align*}
Since
\[
\sum_{n\ge1}\psi_n^\tau
=
\sum_{k\ge1}\sum_{n\ge1}(\varphi^\tau)^{*k}_n
=
\sum_{k\ge1}\|\varphi^\tau\|_1^k
=
\sum_{k\ge1}a_\tau^k
=
\frac{a_\tau}{1-a_\tau},
\]
it follows that
\[
\int_0^\infty \mathsf S^\tau(t)\,dt=1.
\]

Now consider the Laplace transform
\[
\widehat{\mathsf S^\tau}(z):=\int_0^\infty e^{-zt}\mathsf S^\tau(t)\,dt.
\]
Since $\mathsf S^\tau$ is constant on each interval $[n/\tau,(n+1)/\tau)$, $n\ge0$, we obtain
\begin{align*}
\widehat{\mathsf S^\tau}(z)
&=
\frac{1-a_\tau}{a_\tau}\tau\sum_{n\ge0}\psi_n^\tau
\int_{n/\tau}^{(n+1)/\tau}e^{-zt}\,dt
\\
&=
\frac{1-a_\tau}{a_\tau}\tau\sum_{n\ge1}\psi_n^\tau
\frac{e^{-zn/\tau}-e^{-z(n+1)/\tau}}{z}
=
\frac{1-a_\tau}{a_\tau}\frac{1-e^{-z/\tau}}{z}\tau
\sum_{n\ge1}\psi_n^\tau e^{-zn/\tau}.
\end{align*}

Let
\[
\hat\varphi(s):=\sum_{n\ge1}\varphi_n e^{-sn},
\qquad s>0.
\]
Then
\[
\sum_{n\ge1}\psi_n^\tau e^{-zn/\tau}
=
\sum_{k\ge1}\left(a_\tau\hat\varphi(z/\tau)\right)^k
=
\frac{a_\tau\hat\varphi(z/\tau)}{1-a_\tau\hat\varphi(z/\tau)}.
\]
Therefore,
\begin{equation}\label{eq:laplace-Stau-proof}
\widehat{\mathsf S^\tau}(z)
=
(1-a_\tau)\frac{1-e^{-z/\tau}}{z}\tau\,
\frac{\hat\varphi(z/\tau)}{1-a_\tau\hat\varphi(z/\tau)}.
\end{equation}

Since
\[
\frac{\tau(1-e^{-z/\tau})}{z}\to1
\qquad (\tau\to\infty),
\]
it remains to identify the small-$s$ behavior of $\hat\varphi(s)$ as $s\downarrow0$.

By Abel summation,
\[
\hat\varphi(s)
=
\sum_{n\ge1}\varphi_n e^{-sn}
=
1-(1-e^{-s})\sum_{n\ge0}e^{-sn}\overline\varphi(n).
\]
Using the tail asymptotic
\[
\overline\varphi(n)\sim \frac{1}{\Gamma(1-\alpha)}n^{-\alpha},
\]
together with the standard discrete Karamata--Tauberian asymptotic
\[
\sum_{n\ge1}e^{-sn}n^{-\alpha}\sim \Gamma(1-\alpha)s^{\alpha-1}
\qquad (s\downarrow0),
\]
we obtain
\[
(1-e^{-s})\sum_{n\ge0}e^{-sn}\overline\varphi(n)\sim s^\alpha,
\qquad s\downarrow0.
\]
Hence,
\[
\hat\varphi(s)=1-s^\alpha+o(s^\alpha),
\qquad s\downarrow0.
\]

Substituting $s=z/\tau$, we get
\[
1-a_\tau\hat\varphi(z/\tau)
=
(1-a_\tau)+a_\tau(z/\tau)^\alpha+o(\tau^{-\alpha}).
\]
Since $(1-a_\tau)\sim \gamma\tau^{-\alpha}$ and $\hat\varphi(z/\tau)\to1$, \eqref{eq:laplace-Stau-proof} yields
\begin{align*}
\widehat{\mathsf S^\tau}(z)
&\to
\lim_{\tau\to\infty}
\frac{(1-a_\tau)\hat\varphi(z/\tau)}
{1-a_\tau\hat\varphi(z/\tau)}
\\
&=
\lim_{\tau\to\infty}
\frac{(1-a_\tau)\left(1-(z/\tau)^\alpha+o(\tau^{-\alpha})\right)}
{(1-a_\tau)+a_\tau(z/\tau)^\alpha+o(\tau^{-\alpha})}
=
\frac{\gamma}{\gamma+z^\alpha}.
\end{align*}

Now recall the classical Laplace transform identity
\[
\int_0^\infty e^{-zt}\,\gamma t^{\alpha-1}E_{\alpha,\alpha}(-\gamma t^\alpha)\,dt
=
\frac{\gamma}{\gamma+z^\alpha},
\qquad z>0.
\]
Thus the pointwise Laplace-transform limit is exactly the Laplace transform of the probability density
\[
f_{\alpha,\gamma}(t):=\gamma t^{\alpha-1}E_{\alpha,\alpha}(-\gamma t^\alpha),
\qquad t>0.
\]
Since each $\mathsf S^\tau(t)\,dt$ is a probability measure on $[0,\infty)$, the continuity theorem for Laplace transforms implies
\[
\mathsf S^\tau(t)\,dt
\Rightarrow
f_{\alpha,\gamma}(t)\,dt
\qquad (\tau\to\infty).
\]
This completes the proof.
\end{proof}

\subsection{Lemma~\ref{lem:psi-pointwise}}\label{app:proof-psi-pointwise}

For later use, define the discrete cumulative resolvent by
\[
  \mathcal G_n^\tau
  :=
  \frac{1-a_\tau}{\gamma a_\tau}
  \sum_{k=1}^n\psi_k^\tau,
  \qquad n\ge0,
\]
with $\mathcal G_0^\tau=0$.

\begin{lemma}[Doney-type identity and pointwise resolvent bound]
\label{lem:psi-pointwise}
Define
\[
  R_n^\tau:=1_{\{n=0\}}+\psi_n^\tau,
  \qquad n\ge0.
\]
Then, for every $n\ge1$,
\begin{equation}\label{eq:doney-identity}
  nR_n^\tau
  =
  a_\tau\sum_{m=1}^n m\varphi_m(R^\tau*R^\tau)_{n-m}.
\end{equation}
Moreover, for every $T>0$, there exists $C_T>0$, independent of $\tau$, such
that
\begin{equation}\label{eq:psi-pointwise}
  \psi_n^\tau\le C_T n^{\alpha-1},
  \qquad 1\le n\le N_\tau.
\end{equation}
Equivalently,
\begin{equation}\label{eq:G-increment-pointwise}
  \Delta\mathcal G_n^\tau
  :=
  \mathcal G_n^\tau-\mathcal G_{n-1}^\tau
  =
  \frac{1-a_\tau}{\gamma a_\tau}\psi_n^\tau
  \le C_T\tau^{-\alpha}n^{\alpha-1},
  \qquad 1\le n\le N_\tau.
\end{equation}
\end{lemma}

\begin{proof}
Let
\[
  \mathcal R_\tau(q):=\sum_{n\ge0}R_n^\tau q^n
  =1+\sum_{n\ge1}\psi_n^\tau q^n
  =\frac{1}{1-a_\tau\Phi(q)}.
\]
Differentiating gives
\[
  q\mathcal R_\tau'(q)
  =
  a_\tau\,q\Phi'(q)\,\left(\mathcal R_\tau(q)\right)^2.
\]
Since
\[
  q\Phi'(q)=\sum_{m\ge1}m\varphi_m q^m,
\]
comparing coefficients of $q^n$ yields \eqref{eq:doney-identity}.

Now write $R^\tau=\delta_0+\psi^\tau$, where $\delta_0$ denotes the unit mass at $0$, i.e. $\delta_0(n)=1_{\{n=0\}}$. Expanding \eqref{eq:doney-identity} gives
\begin{equation}\label{eq:doney-expanded}
  n\psi_n^\tau
  =
  a_\tau n\varphi_n
  +2a_\tau\sum_{m=1}^{n-1}m\varphi_m\,\psi_{n-m}^\tau
  +a_\tau\sum_{m=1}^{n-1}m\varphi_m\,(\psi^\tau*\psi^\tau)_{n-m}.
\end{equation}
Because
\[
  \varphi_n\asymp n^{-1-\alpha},
  \qquad
  m\varphi_m\asymp m^{-\alpha},
\]
we prove by induction that $\psi_j^\tau\le C j^{\alpha-1}$ for $1\le j\le n-1$, with $C$ large enough.

{Under this induction hypothesis, the convolution $(\psi^\tau*\psi^\tau)_\ell\le C^2\sum_{r=1}^{\ell-1}r^{\alpha-1}(\ell-r)^{\alpha-1}$. This is a standard beta-integral convolution: splitting at $\lfloor\ell/2\rfloor$ shows both halves contribute $\mathcal O(\ell^{2\alpha-1})$, giving
\[
  (\psi^\tau*\psi^\tau)_\ell \le C_1 \ell^{2\alpha-1}.
\]
The linear and quadratic sums in \eqref{eq:doney-expanded} are then controlled by the same midpoint-split argument. For the linear term $\sum_{m=1}^{n-1}m^{-\alpha}(n-m)^{\alpha-1}$, the two halves give $n^{\alpha-1}\cdot\mathcal O(n^{1-\alpha})$ and $n^{-\alpha}\cdot\mathcal O(n^\alpha)$, both $\mathcal O(1)$. For the quadratic term $\sum_{m=1}^{n-1}m^{-\alpha}(n-m)^{2\alpha-1}$, the two halves give $\mathcal O(n^\alpha)$. Inserting into \eqref{eq:doney-expanded} yields $n\psi_n^\tau\le C_5 n^\alpha$, and hence $\psi_n^\tau\le C_5 n^{\alpha-1}$. Choosing $C\ge C_5$ closes the induction.}
Finally, since
\[
  \Delta\mathcal G_n^\tau
  =
  \frac{1-a_\tau}{\gamma a_\tau}\psi_n^\tau,
\]
\eqref{eq:G-increment-pointwise} is immediate from \eqref{eq:psi-pointwise}.
\end{proof}

\subsection{Proof of Proposition~\ref{proposition1}}\label{app:proposition1}
Fix $T>0$.

\medskip
\noindent
\textit{Step 1: uniform moment bounds for the intensities.}
We first establish the moment estimates for the intensities that will be used throughout the proof.

For $1\le s\le \lfloor \tau T\rfloor$, using Definition~\ref{def:params} and
\[
  1-\sum_{u=1}^{s-1}\varphi_u^\tau
  =(1-a_\tau)+a_\tau\sum_{u=s}^{\infty}\varphi_u,
\]
we obtain
\begin{align*}
  \hat\mu_\tau(s)
  &=\mu_\tau+\xi_0\mu_\tau\left(
      \frac{1}{1-a_\tau}\left((1-a_\tau)+a_\tau\sum_{u=s}^{\infty}\varphi_u\right)
      -a_\tau\sum_{u=1}^{s-1}\varphi_u
    \right) \\
  &\le C\mu_\tau\left(
      1+\frac{1}{1-a_\tau}\sum_{u=s}^{\infty}\varphi_u
    \right).
\end{align*}
For the kernel from Definition~\ref{def:params},
\[
  \sum_{u=s}^{\infty}\varphi_u
  =\frac{1}{\Gamma(1-\alpha)}(s-1)^{-\alpha},
  \qquad s\ge2,
\]
while $\sum_{u=1}^{\infty}\varphi_u=1$. Since
$\mu_\tau=\mu\tau^{\alpha-1}$ and $1-a_\tau=\gamma\tau^{-\alpha}$, it follows that
\begin{equation}\label{eq:muhat-uniform-bound-app-prop1}
  \sup_{1\le s\le \lfloor \tau T\rfloor}\hat\mu_\tau(s)
  \lesssim \tau^{2\alpha-1}.
\end{equation}

Next fix $n\le \lfloor \tau T\rfloor$. By \eqref{eq:lambda-decomp},
\[
  \lambda_n^{\tau,+}
  =
  \hat\mu_\tau(n)
  +
  \sum_{s=1}^{n-1}\psi_{n-s}^\tau \hat\mu_\tau(s)
  +
  \kappa_n^{\tau,+},
\]
where
\[
  \kappa_n^{\tau,+}
  :=
  \frac1{1+\beta}\sum_{s=1}^{n-1}\psi_{n-s}^{\tau}
  \left(
    M_s^{\tau,+}-M_{s-1}^{\tau,+}
    +\beta\left(M_s^{\tau,-}-M_{s-1}^{\tau,-}\right)
  \right).
\]

We first bound the deterministic convolution term $\sum_{s=1}^{n-1}\psi_{n-s}^\tau \hat\mu_\tau(s)$. From the explicit estimate obtained above,
\[
  \hat\mu_\tau(s)\lesssim \tau^{\alpha-1}+\tau^{2\alpha-1}s^{-\alpha},
\]
uniformly for $1\le s\le \lfloor \tau T\rfloor$. Writing $j=n-s$, we obtain
\begin{align}\label{for:two:terms}
  \sum_{s=1}^{n-1}\psi_{n-s}^\tau \hat\mu_\tau(s)
  \lesssim
  \tau^{\alpha-1}\sum_{j=1}^{n-1}\psi_j^\tau
  +\tau^{2\alpha-1}\sum_{j=1}^{n-1}\psi_j^\tau (n-j)^{-\alpha}.
\end{align}

For the first term on the right side of \eqref{for:two:terms},
\[
  \tau^{\alpha-1}\sum_{j=1}^{n-1}\psi_j^\tau
  \le
  \tau^{\alpha-1}\sum_{j\ge1}\psi_j^\tau
  =
  \tau^{\alpha-1}\frac{a_\tau}{1-a_\tau}
  \lesssim \tau^{2\alpha-1}.
\]

For the second term on the right side of \eqref{for:two:terms}, Lemma~\ref{lem:psi-pointwise} gives
\[
  \psi_j^\tau\le C_T j^{\alpha-1},
  \qquad 1\le j\le \lfloor \tau T\rfloor.
\]
Hence,
\[
  \sum_{j=1}^{n-1}\psi_j^\tau (n-j)^{-\alpha}
  \lesssim
  \sum_{j=1}^{n-1} j^{\alpha-1}(n-j)^{-\alpha}.
\]
We split this sum at $j=\lfloor n/2\rfloor$. For $1\le j\le \lfloor n/2\rfloor$,
we have $n-j\asymp n$, and thus
\[
  \sum_{j=1}^{\lfloor n/2\rfloor} j^{\alpha-1}(n-j)^{-\alpha}
  \lesssim
  n^{-\alpha}\sum_{j=1}^{\lfloor n/2\rfloor}j^{\alpha-1}
  \lesssim
  n^{-\alpha}n^\alpha
  \lesssim 1.
\]
For $\lfloor n/2\rfloor+1\le j\le n-1$, write $r=n-j$. Then $j\asymp n$, so that
\[
  \sum_{j=\lfloor n/2\rfloor+1}^{n-1} j^{\alpha-1}(n-j)^{-\alpha}
  \lesssim
  n^{\alpha-1}\sum_{r=1}^{\lceil n/2\rceil} r^{-\alpha}
  \lesssim
  n^{\alpha-1}n^{1-\alpha}
  \lesssim 1.
\]
Therefore,
\[
  \sum_{j=1}^{n-1}\psi_j^\tau (n-j)^{-\alpha}\lesssim 1,
\]
uniformly in $n\le \lfloor \tau T\rfloor$. Consequently,
\begin{equation}\label{eq:conv-muhat-bound-app-prop1}
  \sup_{1\le n\le \lfloor \tau T\rfloor}
  \sum_{s=1}^{n-1}\psi_{n-s}^\tau \hat\mu_\tau(s)
  \lesssim \tau^{2\alpha-1}.
\end{equation}

Taking expectations in \eqref{eq:lambda-decomp} and using that $\kappa_n^{\tau,+}$ has mean zero,
we deduce from \eqref{eq:muhat-uniform-bound-app-prop1} and
\eqref{eq:conv-muhat-bound-app-prop1} that
\begin{equation}\label{eq:Elambda-prop1}
  \sup_{1\le n\le \lfloor \tau T\rfloor}\E[\lambda_n^{\tau,+}]
  \lesssim \tau^{2\alpha-1}.
\end{equation}

We next bound the second moment. Since martingale increments at different times are orthogonal,
\begin{align*}
\E\!\left[\left(\kappa_n^{\tau,+}\right)^2\right]
  =
  \frac1{(1+\beta)^2}
  \sum_{s=1}^{n-1}(\psi_{n-s}^\tau)^2
  \E\!\left[
    \lambda_s^{\tau,+}+\beta^2\lambda_s^{\tau,-}
  \right]=
  \frac{1+\beta^2}{(1+\beta)^2}
  \sum_{s=1}^{n-1}(\psi_{n-s}^\tau)^2\E[\lambda_s^{\tau,+}],
\end{align*}
where we used $\lambda_s^{\tau,+}=\lambda_s^{\tau,-}$. Using \eqref{eq:Elambda-prop1} and
Lemma~\ref{lem:psi-pointwise},
\[
  \E\!\left[(\kappa_n^{\tau,+})^2\right]
  \lesssim
  \tau^{2\alpha-1}\sum_{j=1}^{n-1}(\psi_j^\tau)^2
  \lesssim
  \tau^{2\alpha-1}\sum_{j=1}^{\lfloor \tau T\rfloor}j^{2\alpha-2}
  \lesssim \tau^{4\alpha-2},
\]
since $2\alpha-2>-1$.

Returning to \eqref{eq:lambda-decomp}, the deterministic part
\[
  \hat\mu_\tau(n)+\sum_{s=1}^{n-1}\psi_{n-s}^\tau\hat\mu_\tau(s)
\]
is $\mathcal O(\tau^{2\alpha-1})$ uniformly in $n\le \lfloor \tau T\rfloor$, and hence
\begin{equation}\label{eq:Elambda2-prop1}
  \sup_{1\le n\le \lfloor \tau T\rfloor}\E[(\lambda_n^{\tau,+})^2]
  \lesssim \tau^{4\alpha-2}.
\end{equation}
The same bounds hold for the ``$-$'' component.

\medskip
\noindent
\textit{Step 2: $C$-tightness of $\mathbf Y^\tau$.}
We apply Lemma~\ref{xu2023} componentwise to $(Y^{\tau,+})_{\tau\ge1}$ and
$(Y^{\tau,-})_{\tau\ge1}$. Since $Y_0^{\tau,\pm}=0$, the initial-moment condition is trivial.

We check the lemma for $Y^{\tau,+}$; the proof for $Y^{\tau,-}$ is identical.

\smallskip
\noindent
\underline{Condition (i).}
Fix $\vartheta>2$. Since $Y^{\tau,+}$ is piecewise constant on the grid
$\{n/\tau:n\ge0\}$,
\[
  \sup_{k=0,\dots,\lfloor T\tau^\vartheta\rfloor-1}
  \sup_{h\in[0,\tau^{-\vartheta}]}
  \left|\Delta_hY_{k/\tau^\vartheta}^{\tau,+}\right|
  \le
  \frac{1-a_\tau}{\tau^\alpha\mu}
  \sup_{1\le s\le \lfloor \tau T\rfloor}X_s^{\tau,+}.
\]
Hence it suffices to show
\[
  \frac{1-a_\tau}{\tau^\alpha\mu}
  \sup_{1\le s\le \lfloor \tau T\rfloor}X_s^{\tau,+}\to0
  \qquad\text{in probability.}
\]
By the union bound and Markov's inequality,
\begin{align*}
  \mathbb P\!\left(
    \frac{1-a_\tau}{\tau^\alpha\mu}
    \sup_{1\le s\le \lfloor \tau T\rfloor}X_s^{\tau,+}>\varepsilon
  \right)
  &\le
  \sum_{s=1}^{\lfloor \tau T\rfloor}
  \frac{\E[(X_s^{\tau,+})^2]}
       {\left(\frac{\tau^\alpha\mu}{1-a_\tau}\varepsilon\right)^2}.
\end{align*}
Since $X_s^{\tau,+}\mid \mathcal F_{s-1}\sim \mathrm{Poisson}(\lambda_s^{\tau,+})$,
\[
  \E[(X_s^{\tau,+})^2]
  =
  \E[\lambda_s^{\tau,+}]
  +\E[(\lambda_s^{\tau,+})^2].
\]
Using \eqref{eq:Elambda-prop1}, \eqref{eq:Elambda2-prop1}, and
\[
  \frac{\tau^\alpha\mu}{1-a_\tau}\asymp \tau^{2\alpha},
\]
we obtain
\[
  \sum_{s=1}^{\lfloor \tau T\rfloor}
  \frac{\E[(X_s^{\tau,+})^2]}
       {\left(\frac{\tau^\alpha\mu}{1-a_\tau}\varepsilon\right)^2}
  \lesssim \tau^{-1}\to0.
\]
Thus condition (i) holds.

\smallskip
\noindent
\underline{Condition (ii).}
Take $p=2$ and use the pairs
$(a_1,b_1)=(0,2)$ and $(a_2,b_2)=(1,1)$.
Then
\[
  \varrho_*=\min\{2,\,1+1/\vartheta\}>1.
\]
For $t\in[0,T]$ and $h\in(0,1)$,
\begin{align*}
  \E\!\left[\left|\Delta_hY_t^{\tau,+}\right|^2\right]
  &=
  \E\!\left[
    \left(
      \frac{1-a_\tau}{\tau^\alpha\mu}
      \sum_{i=\lfloor \tau t\rfloor+1}^{\lfloor \tau(t+h)\rfloor}X_i^{\tau,+}
    \right)^2
  \right] \\
  &\le
  \left(\frac{1-a_\tau}{\tau^\alpha\mu}\right)^2
  \left(\lfloor \tau(t+h)\rfloor-\lfloor \tau t\rfloor\right)
  \sum_{i=\lfloor \tau t\rfloor+1}^{\lfloor \tau(t+h)\rfloor}
  \E\left[\left(X_i^{\tau,+}\right)^2\right].
\end{align*}
Using \eqref{eq:Elambda-prop1}--\eqref{eq:Elambda2-prop1}, we have
\[
  \sup_{1\le i\le \lfloor \tau T\rfloor}\E\left[\left(X_i^{\tau,+}\right)^2\right]
  \lesssim \tau^{4\alpha-2},
\]
and therefore
\[
  \E\!\left[\left|\Delta_hY_t^{\tau,+}\right|^2\right]
  \lesssim
  \left(\frac{1-a_\tau}{\tau^\alpha}\right)^2
  (\tau h+1)^2\tau^{4\alpha-2}
  \lesssim h^2+\frac{h}{\tau}.
\]
Hence condition (ii) of Lemma~\ref{xu2023} holds. Thus
$(Y^{\tau,+})_{\tau\ge1}$ is $C$-tight. The same proof applies to
$(Y^{\tau,-})_{\tau\ge1}$, so $\mathbf Y^\tau$ is $C$-tight.

\medskip
\noindent
\textit{Step 3: $C$-tightness of $\boldsymbol\Lambda^\tau$.}
By Lemma~\ref{lem:tightness},
\[
  \sup_{t\in[0,T]}\|\boldsymbol\Lambda_t^\tau-\mathbf Y_t^\tau\|_1\to0
  \qquad\text{in probability.}
\]
Since $\mathbf Y^\tau$ is $C$-tight, it follows that $\boldsymbol\Lambda^\tau$ is also
$C$-tight.

\medskip
\noindent
\textit{Step 4: $C$-tightness of $\mathbf Z^\tau$.}
We verify the $C$-tightness of $Z^{\tau,+}$ and $Z^{\tau,-}$ via a
martingale tightness criterion (e.g.\ \cite[Proposition~12]{rebolledo1980central}):
it suffices to show that their predictable quadratic variation processes are
$C$-tight and that their jumps vanish in probability.

By definition,
\[
  Z_t^{\tau,+}
  =
  \sqrt{\frac{\tau^\alpha\mu}{1-a_\tau}}
  \left(Y_t^{\tau,+}-\Lambda_t^{\tau,+}\right)
  =
  \sqrt{\frac{1-a_\tau}{\tau^\alpha\mu}}\,M_{\lfloor \tau t\rfloor}^{\tau,+},
\]
and similarly
\[
  Z_t^{\tau,-}
  =
  \sqrt{\frac{1-a_\tau}{\tau^\alpha\mu}}\,M_{\lfloor \tau t\rfloor}^{\tau,-}.
\]
Hence $Z^{\tau,+}$ and $Z^{\tau,-}$ are square-integrable c\`adl\`ag martingales. Moreover,
\[
  \langle Z^{\tau,+},Z^{\tau,+}\rangle_t
  =
  \frac{1-a_\tau}{\tau^\alpha\mu}
  \sum_{s=1}^{\lfloor \tau t\rfloor}\lambda_s^{\tau,+}
  =
  \Lambda_t^{\tau,+},
\]
and similarly
\[
  \langle Z^{\tau,-},Z^{\tau,-}\rangle_t
  =
  \Lambda_t^{\tau,-}.
\]
Since $(\boldsymbol\Lambda^\tau)_{\tau\ge1}$ is $C$-tight, both sequences
$(\langle Z^{\tau,+},Z^{\tau,+}\rangle)_{\tau\ge1}$ and
$(\langle Z^{\tau,-},Z^{\tau,-}\rangle)_{\tau\ge1}$ are $C$-tight.
It remains to show that
\[
  \sup_{t\in[0,T]}\left|\Delta Z_t^{\tau,+}\right|\xrightarrow{\mathbb P}0,
  \qquad
  \sup_{t\in[0,T]}\left|\Delta Z_t^{\tau,-}\right|\xrightarrow{\mathbb P}0.
\]
Since
\[
  \Delta Z_t^{\tau,+}
  =
  \sqrt{\frac{1-a_\tau}{\tau^\alpha\mu}}
  \left(M_{\lfloor \tau t\rfloor}^{\tau,+}-M_{\lfloor \tau t\rfloor-1}^{\tau,+}\right),
\]
we have
\[
  \sup_{t\in[0,T]}\left|\Delta Z_t^{\tau,+}\right|
  \le
  \sqrt{\frac{1-a_\tau}{\tau^\alpha\mu}}
  \sup_{1\le s\le \lfloor \tau T\rfloor}
  \left|X_s^{\tau,+}-\lambda_s^{\tau,+}\right|.
\]
We use the fourth-moment Markov inequality. For every $\varepsilon>0$,
\begin{align*}
  \mathbb P\!\left(
    \sup_{t\in[0,T]}\left|\Delta Z_t^{\tau,+}\right|>\varepsilon
  \right)
  &\le
  \sum_{s=1}^{\lfloor \tau T\rfloor}
  \mathbb P\!\left(
    \left|X_s^{\tau,+}-\lambda_s^{\tau,+}\right|
    >
    \varepsilon\sqrt{\frac{\tau^\alpha\mu}{1-a_\tau}}
  \right) \\
  &\le
  \sum_{s=1}^{\lfloor \tau T\rfloor}
  \frac{
    \E\!\left[\left|X_s^{\tau,+}-\lambda_s^{\tau,+}\right|^4\right]
  }{
    \varepsilon^4\left(\frac{\tau^\alpha\mu}{1-a_\tau}\right)^2
  }.
\end{align*}

Now conditionally on $\mathcal F_{s-1}$,
\[
  X_s^{\tau,+}\mid \mathcal F_{s-1}\sim \mathrm{Poisson}(\lambda_s^{\tau,+}),
\]
and the centered fourth moment of a Poisson random variable is
\[
  \E\!\left[
    \left(X_s^{\tau,+}-\lambda_s^{\tau,+}\right)^4 \mid \mathcal F_{s-1}
  \right]
  =
  \lambda_s^{\tau,+}+3(\lambda_s^{\tau,+})^2.
\]
Taking expectations and using \eqref{eq:Elambda-prop1} and \eqref{eq:Elambda2-prop1},
we obtain
\[
  \sup_{1\le s\le \lfloor \tau T\rfloor}
  \E\!\left[\left|X_s^{\tau,+}-\lambda_s^{\tau,+}\right|^4\right]
  \lesssim
  \tau^{2\alpha-1}+\tau^{4\alpha-2}
  \lesssim \tau^{4\alpha-2}.
\]
Since
\[
  \left(\frac{\tau^\alpha\mu}{1-a_\tau}\right)^2\asymp \tau^{4\alpha},
\]
it follows that
\begin{align*}
  \mathbb P\!\left(
    \sup_{t\in[0,T]}\left|\Delta Z_t^{\tau,+}\right|>\varepsilon
  \right)
  &\lesssim
  \lfloor \tau T\rfloor \cdot \tau^{4\alpha-2}\cdot \tau^{-4\alpha}
  \lesssim \tau^{-1}\to0.
\end{align*}
Thus,
\[
\sup_{t\in[0,T]}\left|\Delta Z_t^{\tau,+}\right|\to0
  \qquad\text{in probability.}
\]
The same proof applies to $Z^{\tau,-}$.

Therefore, by \cite[Proposition~12]{rebolledo1980central},
$(Z^{\tau,+})_{\tau\ge1}$ and $(Z^{\tau,-})_{\tau\ge1}$ are $C$-tight.
Hence $(\mathbf Z^\tau)_{\tau\ge1}$ is $C$-tight on $[0,T]$. Moreover,
\[
  \sup_{t\in[0,T]}\left\|\Delta \mathbf Z_t^\tau\right\|_1
  \le
  \sup_{t\in[0,T]}\left|\Delta Z_t^{\tau,+}\right|
  +
  \sup_{t\in[0,T]}\left|\Delta Z_t^{\tau,-}\right|
  \to 0
\]
in probability.

\medskip
\noindent
\textit{Step 5: identification of the quadratic variation of any limit point.}
Let $(\mathbf Y,\mathbf Z)$ be a limit point of $(\mathbf Y^\tau,\mathbf Z^\tau)$.

For the ``$+$'' component,
\[
  [Z^{\tau,+},Z^{\tau,+}]_{n/\tau}
  =
  \frac{1-a_\tau}{\tau^\alpha\mu}
  \sum_{s=1}^{n}(X_s^{\tau,+}-\lambda_s^{\tau,+})^2,
\]
so that
\[
  U_n^{\tau,+}
  :=
  [Z^{\tau,+},Z^{\tau,+}]_{n/\tau}
  -
  \langle Z^{\tau,+},Z^{\tau,+}\rangle_{n/\tau}
  =
  \frac{1-a_\tau}{\tau^\alpha\mu}
  \sum_{s=1}^{n}
  \left((X_s^{\tau,+}-\lambda_s^{\tau,+})^2-\lambda_s^{\tau,+}\right).
\]
Then $(U_n^{\tau,+})_{n\ge0}$ is a martingale, and by orthogonality of martingale
increments,
\begin{align*}
  \E\!\left[\left(U_{\lfloor t\tau\rfloor}^{\tau,+}\right)^2\right]
  &=
  \left(\frac{1-a_\tau}{\tau^\alpha\mu}\right)^2
  \sum_{s=1}^{\lfloor t\tau\rfloor}
  \E\!\left[
    \Var\!\left(
      (X_s^{\tau,+}-\lambda_s^{\tau,+})^2\mid\mathcal F_{s-1}
    \right)
  \right].
\end{align*}
Since $X_s^{\tau,+}\mid\mathcal F_{s-1}\sim\mathrm{Poisson}(\lambda_s^{\tau,+})$,
\[
  \Var\!\left((X_s^{\tau,+}-\lambda_s^{\tau,+})^2\mid\mathcal F_{s-1}\right)
  =
  2(\lambda_s^{\tau,+})^2+\lambda_s^{\tau,+}.
\]
Therefore,
\[
  \E\!\left[\left(U_{\lfloor t\tau\rfloor}^{\tau,+}\right)^2\right]
  \lesssim
  \left(\frac{1-a_\tau}{\tau^\alpha\mu}\right)^2
  \sum_{s=1}^{\lfloor t\tau\rfloor}
  \left(\E[(\lambda_s^{\tau,+})^2]+\E[\lambda_s^{\tau,+}]\right).
\]
Using \eqref{eq:Elambda-prop1} and \eqref{eq:Elambda2-prop1}, we get
\[
  \E\!\left[\left(U_{\lfloor t\tau\rfloor}^{\tau,+}\right)^2\right]
  \lesssim \tau^{-1},
\]
uniformly for $t\in[0,T]$. Hence, by Doob's inequality,
\[
  \sup_{t\in[0,T]}
  \left|
    [Z^{\tau,+},Z^{\tau,+}]_t-\Lambda_t^{\tau,+}
  \right|
  \to0
  \qquad\text{in probability.}
\]
Combining this with Lemma~\ref{lem:tightness},
\[
  \sup_{t\in[0,T]}
  \left|
    [Z^{\tau,+},Z^{\tau,+}]_t-Y_t^{\tau,+}
  \right|
  \to0
  \qquad\text{in probability.}
\]
Passing to the limit yields
\[
  [Z^+,Z^+]_t=Y_t^+.
\]
The same argument gives
\[
  [Z^-,Z^-]_t=Y_t^-.
\]

For the cross-variation,
\[
\left[Z^{\tau,+},Z^{\tau,-}\right]_{n/\tau}
  =
  \frac{1-a_\tau}{\tau^\alpha\mu}
  \sum_{s=1}^{n}
  \left(X_s^{\tau,+}-\lambda_s^{\tau,+}\right)\left(X_s^{\tau,-}-\lambda_s^{\tau,-}\right).
\]
Because $X_s^{\tau,+}$ and $X_s^{\tau,-}$ are conditionally independent given
$\mathcal F_{s-1}$, the process
\[
  V_n^\tau:=[Z^{\tau,+},Z^{\tau,-}]_{n/\tau}
\]
is a martingale. Moreover,
\begin{align*}
  \E\!\left[\left(V_{\lfloor t\tau\rfloor}^\tau\right)^2\right]
  &=
  \left(\frac{1-a_\tau}{\tau^\alpha\mu}\right)^2
  \sum_{s=1}^{\lfloor t\tau\rfloor}
  \E\!\left[
    \lambda_s^{\tau,+}\lambda_s^{\tau,-}
  \right].
\end{align*}
Since $\lambda_s^{\tau,+}=\lambda_s^{\tau,-}$ and \eqref{eq:Elambda2-prop1} holds,
\[
  \E\!\left[\left(V_{\lfloor t\tau\rfloor}^\tau\right)^2\right]
  \lesssim \tau^{-1}.
\]
Hence, by Doob's inequality,
\[
  \sup_{t\in[0,T]}|[Z^{\tau,+},Z^{\tau,-}]_t|
  \to0
  \qquad\text{in probability,}
\]
and therefore
\[
  [Z^+,Z^-]_t=0.
\]

Since $(\mathbf Z^\tau)_{\tau\ge1}$ is $C$-tight, every limit point $\mathbf Z$ is continuous.
Therefore $\mathbf Z$ is a continuous martingale with
\[
  [\mathbf Z,\mathbf Z]=\mathrm{diag}(\mathbf Y).
\]
This completes the proof.

\subsection{Proof of Lemma~\ref{lem:tightness}}\label{app:tightness}
Fix $T>0$. It suffices to prove
\[
  \sup_{t \in [0,T]} \left|\Lambda_t^{\tau,+} - Y_t^{\tau,+}\right| \to 0
  \qquad\text{in probability as }\tau\to\infty,
\]
since the proof for the ``$-$'' component is identical.

Since
\[
  Y_t^{\tau,+}-\Lambda_t^{\tau,+}
  =\frac{1-a_\tau}{\tau^\alpha\mu}\,M_{\lfloor t\tau\rfloor}^{\tau,+},
\]
Doob's $L^2$ inequality yields
\begin{align*}
  \E\left[\sup_{t\in[0,T]}|Y_t^{\tau,+}-\Lambda_t^{\tau,+}|^2\right]
  &\le \left(\frac{1-a_\tau}{\tau^\alpha\mu}\right)^2
  \E\left[\sup_{0\le n\le \lfloor\tau T\rfloor}|M_n^{\tau,+}|^2\right]\\
  &\le 4\left(\frac{1-a_\tau}{\tau^\alpha\mu}\right)^2
  \E\left[|M_{\lfloor\tau T\rfloor}^{\tau,+}|^2\right].
\end{align*}
Because $M^{\tau,+}$ is a square-integrable martingale,
\[
  \E\left[\left|M_{\lfloor\tau T\rfloor}^{\tau,+}\right|^2\right]
  =\E\left[\left\langle M^{\tau,+}\right\rangle_{\lfloor\tau T\rfloor}\right]
  =\sum_{s=1}^{\lfloor\tau T\rfloor}\E[\lambda_s^{\tau,+}].
\]
Using the uniform estimate \eqref{eq:Elambda-prop1}, we have
\[
  \sup_{1\le s\le \lfloor\tau T\rfloor}\E[\lambda_s^{\tau,+}]
  \lesssim \tau^{2\alpha-1},
\]
and therefore
\[
  \sum_{s=1}^{\lfloor\tau T\rfloor}\E[\lambda_s^{\tau,+}]
  \lesssim \tau^{2\alpha}.
\]
Hence,
\[
  \E\left[\sup_{t\in[0,T]}\left|Y_t^{\tau,+}-\Lambda_t^{\tau,+}\right|^2\right]
  \lesssim
  \left(\frac{1-a_\tau}{\tau^\alpha\mu}\right)^2\tau^{2\alpha}
  \lesssim \tau^{-2\alpha}\to0.
\]
Thus,
\[
  \sup_{t\in[0,T]}\left|Y_t^{\tau,+}-\Lambda_t^{\tau,+}\right|
  \to 0
  \qquad\text{in }L^2,
\]
and hence also in probability. This completes the proof.

\subsection{Proof of Proposition~\ref{prop2}}\label{app:prop2}

Fix $T>0$. Let
\[
\left(Y^{\tau_k,+},Y^{\tau_k,-},Z^{\tau_k,+},Z^{\tau_k,-}\right)
\]
be a subsequence converging in law on $D([0,T],\mathbb R^4)$ to a limit point
\[
(Y^+,Y^-,Z^+,Z^-).
\]
By Proposition~\ref{proposition1}, every limit point is continuous. Hence, by the
Skorokhod representation theorem, after possibly enlarging the probability space
and still denoting the subsequence by $\tau_k$, we may assume that
\begin{equation}\label{eq:prop2-skorokhod}
\sup_{t\in[0,T]}
\left(
\left|Y_t^{\tau_k,+}-Y_t^+\right|
+
\left|Y_t^{\tau_k,-}-Y_t^-\right|
+
\left|Z_t^{\tau_k,+}-Z_t^+\right|
+
\left|Z_t^{\tau_k,-}-Z_t^-\right|
\right)\to0
\qquad\text{a.s.}
\end{equation}
as $k\to\infty$.

\medskip
\noindent
\textit{Step 1: the two limit components of $\mathbf Y^\tau$ coincide.}

Recall that
\[
\Lambda_t^{\tau,+}=\Lambda_t^{\tau,-},
\qquad t\in[0,T].
\]
Hence, by Lemma~\ref{lem:tightness},
\begin{align*}
\sup_{t\in[0,T]}\left|Y_t^{\tau,+}-Y_t^{\tau,-}\right|
  &\le
  \sup_{t\in[0,T]}\left|Y_t^{\tau,+}-\Lambda_t^{\tau,+}\right|
  +
  \sup_{t\in[0,T]}\left|Y_t^{\tau,-}-\Lambda_t^{\tau,-}\right|
  \\
  &\to0
  \qquad\text{in probability.}
\end{align*}
Passing to the almost sure Skorokhod realization along the chosen subsequence,
we obtain
\[
  Y_t^+=Y_t^-
  \qquad\text{for all }t\in[0,T],\ \text{a.s.}
\]
We therefore write
\[
  Y_t:=Y_t^+=Y_t^-,
  \qquad t\in[0,T].
\]

\medskip
\noindent
\textit{Step 2: an explicit decomposition for $\Lambda^{\tau,+}$.}

We first show that for every integer $n\ge1$,
\begin{equation}\label{eq:prop2-main-identity}
  \hat\mu_\tau(n)+\sum_{s=1}^{n-1}\psi_{n-s}^\tau\hat\mu_\tau(s)
  =
  \mu_\tau+\xi_0\mu_\tau\frac1{1-a_\tau}
  +\mu_\tau(1-\xi_0)\sum_{s=1}^{n-1}\psi_{n-s}^\tau.
\end{equation}
Indeed, Definition~\ref{def:params} gives
\[
  \hat\mu_\tau(n)
  =
  \mu_\tau
  +
  \xi_0\mu_\tau
  \left(
    \frac{1}{1-a_\tau}\left(1-\sum_{r=1}^{n-1}\varphi_r^\tau\right)
    -
    \sum_{r=1}^{n-1}\varphi_r^\tau
  \right).
\]
Hence,
\begin{align*}
&\hat\mu_\tau(n)+\sum_{s=1}^{n-1}\psi_{n-s}^\tau\hat\mu_\tau(s)
  \\
  &=
  \mu_\tau
  +
  \xi_0\mu_\tau
  \left(
    \frac{1}{1-a_\tau}\left(1-\sum_{r=1}^{n-1}\varphi_r^\tau\right)
    -
    \sum_{r=1}^{n-1}\varphi_r^\tau
  \right)
  \\
  &\quad
  +
  \mu_\tau\sum_{s=1}^{n-1}\psi_{n-s}^\tau
  +
  \xi_0\mu_\tau
  \sum_{s=1}^{n-1}\psi_{n-s}^\tau
  \left(
    \frac{1}{1-a_\tau}\left(1-\sum_{r=1}^{s-1}\varphi_r^\tau\right)
    -
    \sum_{r=1}^{s-1}\varphi_r^\tau
  \right).
\end{align*}
Since
\[
  1-\sum_{r=1}^{s-1}\varphi_r^\tau
  =1-\sum_{r\ge1}\varphi_r^\tau+\sum_{r=s}^{\infty}\varphi_r^\tau
  =(1-a_\tau)+\sum_{r=s}^{\infty}\varphi_r^\tau,
\]
we obtain
\[
  \frac{1}{1-a_\tau}\left(1-\sum_{r=1}^{s-1}\varphi_r^\tau\right)
  -\sum_{r=1}^{s-1}\varphi_r^\tau
  =
  1+\left(1+\frac{1}{1-a_\tau}\right)\sum_{r=s}^{\infty}\varphi_r^\tau
  -\sum_{r\ge1}\varphi_r^\tau.
\]
Since $\sum_{r\ge1}\varphi_r^\tau=a_\tau$, this becomes
\[
  \frac{1}{1-a_\tau}\left(1-\sum_{r=1}^{s-1}\varphi_r^\tau\right)
  -\sum_{r=1}^{s-1}\varphi_r^\tau
  =
  (1-a_\tau)+\frac{1}{1-a_\tau}\sum_{r=s}^{\infty}\varphi_r^\tau+\sum_{r=s}^{\infty}\varphi_r^\tau.
\]
To compute the convolution term explicitly, we rewrite $\hat\mu_\tau(s)$ in terms of the tail
$\sum_{r=s}^\infty \varphi_r^\tau$ and then use the renewal identity
\[
  \psi^\tau=\varphi^\tau+\psi^\tau*\varphi^\tau,
\]
which implies, for every $n\ge1$,
\[
  \sum_{s=1}^{n-1}\psi_{n-s}^\tau\varphi_s^\tau
  =
  \psi_n^\tau-\varphi_n^\tau.
\]
Using this identity inside the convolution term above and rearranging yields
\[
  \sum_{s=1}^{n-1}\psi_{n-s}^\tau\hat\mu_\tau(s)
  =
  \xi_0\mu_\tau\left(\frac1{1-a_\tau}+1\right)\sum_{s=1}^{n-1}\varphi_{n-s}^\tau
  +
  \mu_\tau(1-\xi_0)\sum_{s=1}^{n-1}\psi_{n-s}^\tau.
\]
Substituting this back into the expression for
$\hat\mu_\tau(n)+\sum_{s=1}^{n-1}\psi_{n-s}^\tau\hat\mu_\tau(s)$ gives
\eqref{eq:prop2-main-identity}.

Now recall from \eqref{eq:lambda-decomp} that
\[
  \lambda_n^{\tau,+}
  =
  \hat\mu_\tau(n)
  +
  \sum_{s=1}^{n-1}\psi_{n-s}^\tau\hat\mu_\tau(s)
  +
  \frac1{1+\beta}\sum_{s=1}^{n-1}\psi_{n-s}^\tau
  \left(
    \Delta M_s^{\tau,+}
    +
    \beta \Delta M_s^{\tau,-}
  \right),
\]
where
\[
  \Delta M_s^{\tau,\pm}:=M_s^{\tau,\pm}-M_{s-1}^{\tau,\pm}.
\]
Using \eqref{eq:prop2-main-identity}, we obtain
\begin{align}
  \lambda_n^{\tau,+}
  &=
  \mu_\tau
  +
  \mu_\tau\sum_{s=1}^{n-1}\psi_{n-s}^\tau
  +
  \xi_0\mu_\tau
  \left(
    \frac1{1-a_\tau}
    -
    \sum_{s=1}^{n-1}\psi_{n-s}^\tau
  \right)
  \nonumber\\
  &\quad
  +
  \frac1{1+\beta}\sum_{s=1}^{n-1}\psi_{n-s}^\tau
  \left(
    \Delta M_s^{\tau,+}
    +
    \beta \Delta M_s^{\tau,-}
  \right).
  \label{eq:lambda-explicit-prop2}
\end{align}

We now sum \eqref{eq:lambda-explicit-prop2} over $n=1,\dots,m$.
For the deterministic term, Tonelli's theorem gives
\[
  \sum_{n=1}^{m}\sum_{s=1}^{n-1}\psi_{n-s}^\tau
  =
  \sum_{s=1}^{m-1}\sum_{j=1}^{m-s}\psi_j^\tau
  =
  \sum_{j=1}^{m-1}(m-j)\psi_j^\tau
  =
  \sum_{s=1}^{m-1}s\psi_{m-s}^\tau.
\]
For the martingale term, define
\[
  H_s^\tau:=M_s^{\tau,+}+\beta M_s^{\tau,-},
  \qquad
  \Delta H_s^\tau:=H_s^\tau-H_{s-1}^\tau.
\]
Then
\begin{align*}
  \sum_{n=1}^{m}\sum_{s=1}^{n-1}\psi_{n-s}^\tau \Delta H_s^\tau
  &=
  \sum_{s=1}^{m-1}\left(\sum_{j=1}^{m-s}\psi_j^\tau\right)\Delta H_s^\tau.
\end{align*}
Set
\[
  A_s^\tau:=\sum_{j=1}^{m-s}\psi_j^\tau,
  \qquad 1\le s\le m-1.
\]
By discrete summation by parts,
\[
  \sum_{s=1}^{m-1}A_s^\tau \Delta H_s^\tau
  =
  A_{m-1}^\tau H_{m-1}^\tau
  +
  \sum_{s=1}^{m-2}(A_s^\tau-A_{s+1}^\tau)H_s^\tau.
\]
Since
\[
  A_s^\tau-A_{s+1}^\tau=\psi_{m-s}^\tau,
  \qquad
  A_{m-1}^\tau=\psi_1^\tau,
\]
this yields
\[
  \sum_{n=1}^{m}\sum_{s=1}^{n-1}\psi_{n-s}^\tau \Delta H_s^\tau
  =
  \sum_{s=1}^{m-1}\psi_{m-s}^\tau H_s^\tau.
\]
Consequently,
\begin{align}
  \sum_{n=1}^{m}\lambda_n^{\tau,+}
  &=
  \mu_\tau m
  +
  \mu_\tau\sum_{s=1}^{m-1}s\psi_{m-s}^\tau
  +
  \xi_0\mu_\tau
  \left(
    \frac{m}{1-a_\tau}
    -
    \sum_{s=1}^{m-1}s\psi_{m-s}^\tau
  \right)
  \nonumber\\
  &\quad
  +
  \frac1{1+\beta}
  \sum_{s=1}^{m-1}\psi_{m-s}^\tau
  \left(
    M_s^{\tau,+}+\beta M_s^{\tau,-}
  \right).
  \label{eq:sum-lambda-prop2}
\end{align}

Taking $m=\lfloor \tau t\rfloor$ and multiplying by
$\frac{1-a_\tau}{\tau^\alpha\mu}$, we obtain
\begin{equation}\label{eq:Lambda-decomp-prop2}
  \Lambda_t^{\tau,+}=\Lambda_t^{\tau,-}=T_1^\tau(t)+T_2^\tau(t)+T_3^\tau(t),
\end{equation}
where
\begin{align*}
  T_1^\tau(t)
  &:=
  (1-a_\tau)\frac{\lfloor \tau t\rfloor}{\tau},
  \\
  T_2^\tau(t)
  &:=
  \frac{1-a_\tau}{\tau}
  \sum_{s=1}^{\lfloor \tau t\rfloor-1}s\psi_{\lfloor \tau t\rfloor-s}^\tau
  +
  \xi_0
  \left(
    \frac{\lfloor \tau t\rfloor}{\tau}
    -
    \frac{1-a_\tau}{\tau}
    \sum_{s=1}^{\lfloor \tau t\rfloor-1}s\psi_{\lfloor \tau t\rfloor-s}^\tau
  \right),
  \\
  T_3^\tau(t)
  &:=
  \frac{1-a_\tau}{\tau^\alpha\mu(1+\beta)}
  \sum_{s=1}^{\lfloor \tau t\rfloor-1}
  \psi_{\lfloor \tau t\rfloor-s}^\tau
  \left(
    M_s^{\tau,+}+\beta M_s^{\tau,-}
  \right).
\end{align*}
Using
\[
  Z_{s/\tau}^{\tau,\pm}
  =
  \sqrt{\frac{1-a_\tau}{\tau^\alpha\mu}}\,M_s^{\tau,\pm},
\]
we can rewrite $T_3^\tau$ as
\begin{equation}\label{eq:T3-rewrite-prop2}
  T_3^\tau(t)
  =
  \frac{\tau(1-a_\tau)}{\sqrt{\gamma\mu(1+\beta)^2}}
  \sum_{s=1}^{\lfloor \tau t\rfloor-1}
  \frac1{\tau}\psi_{\lfloor \tau t\rfloor-s}^\tau
  \left(
    Z_{s/\tau}^{\tau,+}+\beta Z_{s/\tau}^{\tau,-}
  \right).
\end{equation}

\medskip
\noindent
\textit{Step 3: passage to the limit in the decomposition.}

Recall the rescaled renewal density
\[
  \mathsf S^\tau(u):=
  \frac{1-a_\tau}{a_\tau}\tau\psi_{\lfloor \tau u\rfloor}^\tau,
  \qquad u\ge0,
\]
and Lemma~\ref{lem:kernel_convergence}, which states that
\[
  \mathsf S^\tau(u)\,du \Rightarrow f_{\alpha,\gamma}(u)\,du
\]
weakly on $[0,\infty)$, where
$f_{\alpha,\gamma}(u):=\gamma u^{\alpha-1}E_{\alpha,\alpha}(-\gamma u^\alpha)$
and 
$F_{\alpha,\gamma}(t):=\int_0^t f_{\alpha,\gamma}(u)\,du$.

Fix $t\in[0,T]$. We identify the limits of the three terms in
\eqref{eq:Lambda-decomp-prop2}.

First,
\[
  T_1^\tau(t)=(1-a_\tau)\frac{\lfloor \tau t\rfloor}{\tau}\to0,
\]
since $1-a_\tau=\gamma\tau^{-\alpha}\to0$.

Next, writing $j=\lfloor \tau t\rfloor-s$, we have
\begin{align*}
  \frac{1-a_\tau}{\tau}
  \sum_{s=1}^{\lfloor \tau t\rfloor-1}s\psi_{\lfloor \tau t\rfloor-s}^\tau
  =
  \frac{1-a_\tau}{\tau}
  \sum_{j=1}^{\lfloor \tau t\rfloor-1}
  (\lfloor \tau t\rfloor-j)\psi_j^\tau
 =
  \frac{a_\tau}{\tau}
  \sum_{j=1}^{\lfloor \tau t\rfloor-1}
  \mathsf S^\tau(j/\tau)
  \left(\frac{\lfloor \tau t\rfloor}{\tau}-\frac{j}{\tau}\right).
\end{align*}
Since the function
$u\mapsto (t-u)\mathbf 1_{[0,t]}(u)$
is bounded and continuous on $[0,\infty)$, the weak convergence of
$\mathsf S^\tau(u)\,du$ implies
\[
  \frac{1-a_\tau}{\tau}
  \sum_{s=1}^{\lfloor \tau t\rfloor-1}s\psi_{\lfloor \tau t\rfloor-s}^\tau
  \to
  \int_0^t (t-u)f_{\alpha,\gamma}(u)\,du
  =
  \int_0^t F_{\alpha,\gamma}(t-s)\,ds.
\]
Also,
$\frac{\lfloor \tau t\rfloor}{\tau}\to t$.
Hence,
\begin{equation}\label{eq:T2-limit-prop2}
  T_2^\tau(t)
  \to
  \int_0^t F_{\alpha,\gamma}(t-s)\,ds
  +
  \xi_0\left(
    t-\int_0^t F_{\alpha,\gamma}(t-s)\,ds
  \right).
\end{equation}

We now treat $T_3^\tau(t)$. Using \eqref{eq:T3-rewrite-prop2}, write
\begin{align*}
  T_3^\tau(t)
  &=
  \frac{a_\tau}{\sqrt{\gamma\mu(1+\beta)^2}}
  \sum_{s=1}^{\lfloor \tau t\rfloor-1}
  \frac1{\tau}
  \mathsf S^\tau\!\left(\frac{\lfloor \tau t\rfloor-s}{\tau}\right)
  \left(
    Z_{s/\tau}^{\tau,+}+\beta Z_{s/\tau}^{\tau,-}
  \right).
\end{align*}
Set
\[
  G^\tau(s):=Z_s^{\tau,+}+\beta Z_s^{\tau,-},
  \qquad
  G(s):=Z_s^++\beta Z_s^-.
\]
Then
\begin{align*}
  &T_3^\tau(t)
  -
  \frac1{\sqrt{\gamma\mu(1+\beta)^2}}
  \int_0^t f_{\alpha,\gamma}(t-s)G(s)\,ds
  \\
  &=
  \frac{a_\tau}{\sqrt{\gamma\mu(1+\beta)^2}}
  \sum_{s=1}^{\lfloor \tau t\rfloor-1}
  \frac1{\tau}
  \mathsf S^\tau\!\left(\frac{\lfloor \tau t\rfloor-s}{\tau}\right)
  \left(G^\tau(s/\tau)-G(s/\tau)\right)
  \\
  &\quad
  +
  \frac{1}{\sqrt{\gamma\mu(1+\beta)^2}}
  \left[
    a_\tau
    \sum_{s=1}^{\lfloor \tau t\rfloor-1}
    \frac1{\tau}
    \mathsf S^\tau\!\left(\frac{\lfloor \tau t\rfloor-s}{\tau}\right)
    G(s/\tau)
    -
    \int_0^t f_{\alpha,\gamma}(t-s)G(s)\,ds
  \right]
  \\
  &=:I_1^\tau(t)+I_2^\tau(t).
\end{align*}
For the first term, since $a_\tau\le1$ and
\[
  \frac1{\tau}\sum_{s=1}^{\lfloor \tau t\rfloor-1}
  \mathsf S^\tau\!\left(\frac{\lfloor \tau t\rfloor-s}{\tau}\right)
  \le
  \int_0^\infty \mathsf S^\tau(u)\,du
  =1,
\]
we get
\[
  |I_1^\tau(t)|
  \le
  \frac{1}{\sqrt{\gamma\mu(1+\beta)^2}}
  \sup_{0\le r\le T}|G^\tau(r)-G(r)|
  \to0
\]
almost surely by \eqref{eq:prop2-skorokhod}.

For the second term, the function
$u\mapsto G(t-u)\mathbf 1_{[0,t]}(u)$
is bounded and continuous on $[0,\infty)$ because $G$ is continuous. Hence the weak
convergence of $\mathsf S^\tau(u)\,du$ implies that
$I_2^\tau(t)\to0$.
Therefore,
\begin{equation}\label{eq:T3-limit-prop2}
  T_3^\tau(t)
  \to
  \frac1{\sqrt{\gamma\mu(1+\beta)^2}}
  \int_0^t
  f_{\alpha,\gamma}(t-s)\left(Z_s^++\beta Z_s^-\right)\,ds.
\end{equation}

Combining \eqref{eq:Lambda-decomp-prop2}, \eqref{eq:T2-limit-prop2},
\eqref{eq:T3-limit-prop2}, and Lemma~\ref{lem:tightness}, which yields
\[
  \sup_{t\in[0,T]}|\Lambda_t^{\tau,+}-Y_t^{\tau,+}|\to0
  \qquad\text{in probability},
\]
we conclude that for every fixed $t\in[0,T]$,
\begin{equation}\label{eq:Y-limit-eqn}
\begin{aligned}
  Y_t
  &=
  \int_0^t F_{\alpha,\gamma}(t-s)\,ds
  +
  \xi_0\left(
    t-\int_0^t F_{\alpha,\gamma}(t-s)\,ds
  \right)
  \\
  &\qquad
  +
  \frac1{\sqrt{\gamma\mu(1+\beta)^2}}
  \int_0^t
  f_{\alpha,\gamma}(t-s)\left(Z_s^++\beta Z_s^-\right)\,ds.
\end{aligned}
\end{equation}

\medskip
\noindent
\textit{Step 4: $Y$ is absolutely continuous and its density.}

Define
\[
  c_{\beta,\gamma,\mu}:=\frac1{\sqrt{\gamma\mu(1+\beta)^2}},
  \qquad
  H_t:=Z_t^++\beta Z_t^-.
\]
Then \eqref{eq:Y-limit-eqn} becomes
\[
  Y_t
  =
  \int_0^t F_{\alpha,\gamma}(t-s)\,ds
  +
  \xi_0\left(
    t-\int_0^t F_{\alpha,\gamma}(t-s)\,ds
  \right)
  +
  c_{\beta,\gamma,\mu}\int_0^t f_{\alpha,\gamma}(t-s)H_s\,ds.
\]
Since $H$ is a continuous semimartingale and $F_{\alpha,\gamma}(0)=0$,
integration by parts yields
\[
  \int_0^t f_{\alpha,\gamma}(t-s)H_s\,ds
  =
  \int_0^t F_{\alpha,\gamma}(t-s)\,dH_s.
\]
Now $f_{\alpha,\gamma}\in L^2([0,T])$ because $\alpha>1/2$, and by
Proposition~\ref{proposition1},
\[
  [H]_t=[Z^++\beta Z^-]_t=(1+\beta^2)Y_t,
\]
which is finite on $[0,T]$. Hence,
\[
  \int_0^T\int_0^t f_{\alpha,\gamma}(t-s)^2\,d[H]_s\,dt<\infty
  \qquad\text{a.s.},
\]
so that stochastic Fubini's theorem applies. Therefore,
\begin{align*}
  \int_0^t F_{\alpha,\gamma}(t-s)\,dH_s
  =
  \int_0^t\int_s^t f_{\alpha,\gamma}(r-s)\,dr\,dH_s
 =
  \int_0^t\int_0^r f_{\alpha,\gamma}(r-s)\,dH_s\,dr.
\end{align*}
Substituting this into the previous display, we obtain
\[
  Y_t=\int_0^t v_s\,ds,
\]
where
\begin{equation}\label{eq:v-density-pre}
  v_t
  =
  \xi_0\left(1-F_{\alpha,\gamma}(t)\right)
  +
  F_{\alpha,\gamma}(t)
  +
  c_{\beta,\gamma,\mu}
  \int_0^t
  f_{\alpha,\gamma}(t-s)\,dH_s.
\end{equation}

\medskip
\noindent
\textit{Step 5: Brownian representation of $Z^+$ and $Z^-$.}

By Proposition~\ref{proposition1}, every limit point satisfies
\[
  [Z^+,Z^+]_t=Y_t,\qquad [Z^-,Z^-]_t=Y_t,\qquad [Z^+,Z^-]_t=0.
\]
Since $Y_t=\int_0^t v_s\,ds$ is absolutely continuous, the quadratic variations
of $Z^+$ and $Z^-$ are absolutely continuous. Hence, by the representation theorem
for continuous local martingales with absolutely continuous quadratic variation,
there exist standard Brownian motions $B^1$ and $B^2$ such that
\[
  Z_t^+=\int_0^t\sqrt{v_s}\,dB_s^1,
  \qquad
  Z_t^-=\int_0^t\sqrt{v_s}\,dB_s^2.
\]
Moreover, since $[Z^+,Z^-]\equiv0$, the two-dimensional Brownian motion
$(B^1,B^2)$ has covariance matrix equal to the identity, and therefore
$B^1$ and $B^2$ are independent.

Thus
\[
  dH_s
  =
  d(Z_s^++\beta Z_s^-)
  =
  \sqrt{v_s}\,\left(dB_s^1+\beta\,dB_s^2\right).
\]
Define
\[
  W_t^V:=\frac{B_t^1+\beta B_t^2}{\sqrt{1+\beta^2}}.
\]
Then $W^V$ is a standard Brownian motion, and
\[
  dH_s=\sqrt{1+\beta^2}\,\sqrt{v_s}\,dW_s^V.
\]
Substituting this into \eqref{eq:v-density-pre}, we obtain
\[
  v_t
  =
  \xi_0\left(1-F_{\alpha,\gamma}(t)\right)
  +
  F_{\alpha,\gamma}(t)
  +
  \sqrt{\frac{1+\beta^2}{\gamma\mu(1+\beta)^2}}
  \int_0^t
  f_{\alpha,\gamma}(t-s)\sqrt{v_s}\,dW_s^V.
\]

Therefore
\[
  Y_t=\int_0^t v_s\,ds,
  \qquad
  Z_t^+=\int_0^t\sqrt{v_s}\,dB_s^1,
  \qquad
  Z_t^-=\int_0^t\sqrt{v_s}\,dB_s^2,
\]
with
\[
  v_t
  =
  \xi_0\left(1-F_{\alpha,\gamma}(t)\right)
  +
  F_{\alpha,\gamma}(t)
  +
  \sqrt{\frac{1+\beta^2}{\gamma\mu(1+\beta)^2}}
  \int_0^t
  f_{\alpha,\gamma}(t-s)\sqrt{v_s}\,dW_s^V.
\]
This proves the claimed representation. The H\"older regularity of $v$
follows from Proposition~6.2 of \cite{el2019characteristic}. This completes
the proof of Proposition~\ref{prop2}.

\subsection{Proof of Theorem~\ref{theorem1}}\label{app:inar-moment}

From Proposition~\ref{proposition1}, we have obtained $C$-tightness of $(\boldsymbol\Lambda^{\tau},\mathbf Y^{\tau},\mathbf Z^{\tau})_{\tau\ge1}$, so every subsequence admits a further subsequence converging in law to some limit point $(\boldsymbol\Lambda,\mathbf Y,\mathbf Z)$. The same proposition also identifies the limiting quadratic covariation:
\[
  [\mathbf Z,\mathbf Z]=\mathrm{diag}(\mathbf Y),
  \qquad
  [Z^+,Z^-]_t=0.
\]

Proposition~\ref{prop2} then shows that every such limit point admits a Brownian representation with independent components and that its rate process $v$, defined by $Y_t=\int_0^t v_s\,ds$, satisfies the stochastic Volterra equation \eqref{finalequation1}. Rewriting that equation in the rough Heston form \eqref{eq:vt} and invoking the well-posedness and uniqueness results for the rough Heston variance equation from \cite{el2019characteristic} identifies the limit uniquely.

Hence all subsequential limit points coincide, and the whole sequence converges in law to the unique limit described in Theorem~\ref{theorem1}. This completes the proof of Theorem~\ref{theorem1}.


\subsection{Proof of Theorem~\ref{maintheorem}}\label{sec:proofoftheorem2}
The proof follows the logic of Corollary 2.1 of \cite{el2019characteristic}, leveraging the convergence result established in our Theorem \ref{theorem1} and applying the continuous mapping theorem. We outline the key steps.

First, recall the definition of the properly rescaled microscopic price process $P_t^\tau$:
\begin{align*}
    {P_t^{\tau}}
    = {\sqrt{\frac{\theta}{2}}\sqrt{\frac{1-a_{\tau}}{\tau^\alpha\mu}}\left( N_{\lfloor t\tau\rfloor}^{\tau,+}-N_{\lfloor t\tau\rfloor}^{\tau,-} \right)-\frac\theta2\frac{1-a_{\tau}}{\tau^\alpha\mu}N_{\lfloor t\tau\rfloor}^{\tau,+}} = {\sqrt{\frac{\theta}{2}}\left(Z_t^{\tau,+} - Z_t^{\tau,-}\right) - \frac{\theta}{2} Y_t^{\tau,+}.}
\end{align*}
The mapping from the process triplet $(Z_t^{\tau,+}, Z_t^{\tau,-}, Y_t^{\tau,+})$ to $P_t^\tau$ is a continuous linear combination. Theorem \ref{theorem1} establishes the joint convergence in law of these processes to their limits $(Z_t^+, Z_t^-, Y_t^+)$ in the Skorokhod topology. By the continuous mapping theorem, $P_t^\tau$ converges in law to a process $P_t$ given by the same linear combination of the limits:
\begin{align*}
 P_t = {\sqrt{\frac{\theta}{2}}(Z_t^+ - Z_t^-) - \frac{\theta}{2} Y_t^+} = \sqrt{\frac{\theta}{2}} \left( \int_0^t \sqrt{v_s} dB_s^1 - \int_0^t \sqrt{v_s} dB_s^2 \right) - \frac{\theta}{2} \int_0^t v_s ds,
\end{align*}
where we have substituted the integral representations of the limit processes from Theorem \ref{theorem1}.

Now define the variance process $V_t:=\theta v_t$ and the Brownian motion
\[
  W_t^X:=\frac{1}{\sqrt{2}}(B_t^1-B_t^2).
\]
Then
\[
  \sqrt{\frac{\theta}{2}}\left(dB_t^1-dB_t^2\right)=\sqrt{\theta}\,dW_t^X,
\]
so that
\[
  \sqrt{\frac{\theta}{2}}\int_0^t \sqrt{v_s}\,(dB_s^1-dB_s^2)
  =\sqrt{\theta}\int_0^t \sqrt{v_s}\,dW_s^X
  =\int_0^t \sqrt{\theta v_s}\,dW_s^X
  =\int_0^t \sqrt{V_s}\,dW_s^X.
\]
Hence,
\[
  P_t = \int_0^t \sqrt{V_s} dW_s^X - \frac{1}{2} \int_0^t V_s ds.
\]
This is precisely the log-price process in the rough Heston model under the risk-neutral measure.

To obtain the SDE for the variance process $V_t$, we simply multiply the SDE for $v_t$ from Theorem \ref{theorem1} by the constant $\theta$. This yields equation \eqref{eq:V}.

Finally, we compute the correlation between the Brownian motions $W^X$ and $W^V$. Using the properties of quadratic covariation and the definitions of $W_t^X$ and $W_t^V = (B_t^1 + \beta B_t^2) / \sqrt{1+\beta^2}$, we have:
\begin{align*}
  &d\left\langle W^X, W^V \right\rangle_t 
  \\
  &= d\left\langle \frac{1}{\sqrt{2}}\left(B^1 - B^2\right), \frac{1}{\sqrt{1+\beta^2}}\left(B^1 + \beta B^2\right) \right\rangle_t \\
  &= \frac{1}{\sqrt{2(1+\beta^2)}} \left( d\left\langle B^1, B^1 \right\rangle_t + \beta d\left\langle B^1, B^2 \right\rangle_t - d\left\langle B^2, B^1 \right\rangle_t - \beta d\left\langle B^2, B^2 \right\rangle_t \right).
\end{align*}
Since $B^1$ and $B^2$ are independent standard Brownian motions, we have $d\langle B^1, B^1 \rangle_t = d\langle B^2, B^2 \rangle_t = dt$ and $d\langle B^1, B^2 \rangle_t = 0$. This simplifies to:
\[
    d\left\langle W^X,W^V \right\rangle_t=\frac{1-\beta}{\sqrt{2(1+\beta^2)}}dt.
\]
This completes the proof.


\section{Technical Results for the Weak-Error Analysis}
\label{supp:weak-error-proofs}

For $\alpha\in(0,1)$ and $f\in C^\alpha([0,T])$, we define
\begin{equation}\label{eq:holder-seminorm}
  [f]_{C^\alpha([0,T])}
  :=
  \sup_{\substack{x,y\in[0,T]\\x\neq y}}
  \frac{|f(x)-f(y)|}{|x-y|^\alpha},
\end{equation}
and
\begin{equation}\label{eq:holder-norm}
  \|f\|_{C^\alpha([0,T])}
  :=
  \|f\|_\infty+[f]_{C^\alpha([0,T])}.
\end{equation}

\subsection{Proof of Proposition~\ref{prop:baseline-riemann}}\label{app:baseline-riemann}

\begin{proof}
We use the explicit form of $\hat\mu_\tau(m)$ from Definition~\ref{def:params}. Write
\[
  \overline\varphi(n):=\sum_{k=n+1}^{\infty}\varphi_k,\qquad n\ge0.
\]
For the kernel chosen in Definition~\ref{def:params}, a direct telescoping computation gives
\begin{equation}\label{eq:tail-phi-explicit-final}
  \overline\varphi(0)=1,
  \qquad
  \overline\varphi(n)=\frac{1}{\Gamma(1-\alpha)}n^{-\alpha},
  \qquad n\ge1.
\end{equation}
Moreover,
\[
  1-\sum_{s=1}^{m-1}\varphi_s^\tau
  =1-a_\tau\sum_{s=1}^{m-1}\varphi_s
  =\varepsilon_\tau+a_\tau\overline\varphi(m-1).
\]
Substituting this into the definition of $\hat\mu_\tau(m)$ yields
\begin{align}
  \hat\mu_\tau(m)
  &=\mu_\tau+\xi_0\mu_\tau\left(
      \frac{\varepsilon_\tau+a_\tau\overline\varphi(m-1)}{\varepsilon_\tau}
      -a_\tau\sum_{s=1}^{m-1}\varphi_s
    \right)
  \nonumber\\
  &=\mu_\tau
    +\xi_0\mu_\tau\varepsilon_\tau
    +\xi_0\mu_\tau a_\tau\left(\varepsilon_\tau^{-1}+1\right)\overline\varphi(m-1).
  \label{eq:muhat-decomp-final}
\end{align}
Multiplying by $\varepsilon_\tau/\mu$ and using $\mu_\tau=\mu\tau^{\alpha-1}$ and $\varepsilon_\tau=\gamma\tau^{-\alpha}$, we obtain
\begin{equation}\label{eq:muhat-scaled-decomp-final}
  \frac{\varepsilon_\tau}{\mu}\hat\mu_\tau(m)
  = \frac{\gamma}{\tau}
    + \frac{\xi_0\gamma^2}{\tau^{\alpha+1}}
    + \xi_0 a_\tau\tau^{\alpha-1}\overline\varphi(m-1)
    + \frac{\xi_0\gamma a_\tau}{\tau}\overline\varphi(m-1).
\end{equation}
Hence,
\begin{align}
  &\frac{\varepsilon_\tau}{\mu}
    \sum_{m=1}^{N_\tau}\hat\mu_\tau(m)
    f\!\left(T-\frac{m}{\tau}\right)
  \nonumber\\
  &=
    \frac{\gamma}{\tau}
    \sum_{m=1}^{N_\tau}f\!\left(T-\frac{m}{\tau}\right)
    +
    \xi_0 a_\tau\tau^{\alpha-1}
    \sum_{m=1}^{N_\tau}\overline\varphi(m-1)
    f\!\left(T-\frac{m}{\tau}\right)
    +R_\tau(f),
  \label{eq:baseline-split-final}
\end{align}
where, by \eqref{eq:tail-phi-explicit-final},
\begin{align}
  |R_\tau(f)|
  &\le C_T\|f\|_{\infty}
  \left(
    \tau^{-\alpha}
    +
    \frac1\tau\sum_{m=2}^{N_\tau}(m-1)^{-\alpha}
  \right)
  \nonumber\\
  &\le C_T\|f\|_{\infty}
  \left(
    \tau^{-\alpha}
    +\tau^{-1}N_\tau^{1-\alpha}
  \right)
  \le C_T\|f\|_{\infty}\tau^{-\alpha}.
  \label{eq:Rtau-bound-final}
\end{align}

We now treat the two main terms in \eqref{eq:baseline-split-final}.

\medskip
\noindent
\emph{Step 1: regular Riemann sum.}
Set
\[
x_m:=T-\frac{m}{\tau},\qquad m=1,\dots,N_\tau.
\]
Then
\[
\frac{1}{\tau}\sum_{m=1}^{N_\tau}f\!\left(T-\frac{m}{\tau}\right)
=
\sum_{m=1}^{N_\tau}\int_{x_m}^{x_m+1/\tau} f(x_m)\,ds.
\]
Therefore,
\begin{align*}
&\left|
\frac{1}{\tau}\sum_{m=1}^{N_\tau}f\!\left(T-\frac{m}{\tau}\right)
-\int_0^T f(s)\,ds
\right|
\\
&\le
\sum_{m=1}^{N_\tau}
\int_{x_m}^{x_m+1/\tau}|f(x_m)-f(s)|\,ds
+
\left|\int_{N_\tau/\tau}^{T}f(s)\,ds\right|.
\end{align*}
Since $f\in C^\alpha([0,T])$, by \eqref{eq:holder-seminorm}, for
$s\in[x_m,x_m+1/\tau]$,
\[
  |f(x_m)-f(s)|
  \le [f]_{C^\alpha([0,T])}|s-x_m|^\alpha
  \le [f]_{C^\alpha([0,T])}\tau^{-\alpha},
\]
and hence,
\[
\sum_{m=1}^{N_\tau}
\int_{x_m}^{x_m+1/\tau}|f(x_m)-f(s)|\,ds
\le
N_\tau\cdot \tau^{-1-\alpha}[f]_{C^\alpha([0,T])}
\le C_T\|f\|_{C^\alpha([0,T])}\tau^{-\alpha}.
\]
Also, since $0\le T-N_\tau/\tau\le 1/\tau$,
\[
\left|\int_{N_\tau/\tau}^{T}f(s)\,ds\right|
\le
\frac{1}{\tau}\|f\|_\infty
\le
C_T\|f\|_{C^\alpha([0,T])}\tau^{-1}
\le
C_T\|f\|_{C^\alpha([0,T])}\tau^{-\alpha},
\]
because $\alpha\in(0,1)$. Thus
\begin{equation}\label{eq:regular-riemann-final}
  \left|
    \frac{\gamma}{\tau}\sum_{m=1}^{N_\tau}f\!\left(T-\frac{m}{\tau}\right)
    -\gamma\int_0^T f(s)\,ds
  \right|
  \le C_T\|f\|_{C^{\alpha}([0,T])}\tau^{-\alpha}.
\end{equation}

\medskip
\noindent
\emph{Step 2: weakly singular Riemann sum.}
Set
\[
  g(u):=f(T-u),\qquad u\in[0,T].
\]
Then $g\in C^{\alpha}([0,T])$ and
\[
  \|g\|_{C^{\alpha}([0,T])}\le C\|f\|_{C^{\alpha}([0,T])}.
\]
By \eqref{eq:tail-phi-explicit-final}, the term $m=1$ must be treated separately, since
\[
  \overline\varphi(0)=1,
  \qquad
  \overline\varphi(j)=\frac{1}{\Gamma(1-\alpha)}j^{-\alpha},
  \qquad j\ge1.
\]
Hence,
\begin{equation}\label{eq:singular-sum-decomp-final}
\begin{aligned}
  \tau^{\alpha-1}\sum_{m=1}^{N_\tau}\overline\varphi(m-1)
  f\!\left(T-\frac{m}{\tau}\right)
  =
  \tau^{\alpha-1}g\!\left(\frac1\tau\right)
  +
  \frac{1}{\Gamma(1-\alpha)}\frac1\tau
  \sum_{j=1}^{N_\tau-1}\left(\frac{j}{\tau}\right)^{-\alpha}
  g\!\left(\frac{j+1}{\tau}\right).
\end{aligned}
\end{equation}
We compare this with
\[
  I^{1-\alpha}f(T)
  =
  \frac{1}{\Gamma(1-\alpha)}\int_0^T u^{-\alpha}g(u)\,du.
\]

We split the error into the first cell and the remaining cells.

\medskip
\noindent
\emph{First cell.}
Write
\begin{align*}
  \Delta_{0,\tau}
  &:=
  \tau^{\alpha-1}g\!\left(\frac1\tau\right)
  -
  \frac{1}{\Gamma(1-\alpha)}
  \int_0^{1/\tau}u^{-\alpha}g(u)\,du.
\end{align*}
Using
\[
g\!\left(\frac1\tau\right)=g(0)+\mathcal O\!\left(\tau^{-\alpha}\|g\|_{C^\alpha([0,T])}\right),
\]
we obtain
\begin{align*}
  \Delta_{0,\tau}
  &=
  g(0)\left[
    \tau^{\alpha-1}
    -
    \frac{1}{\Gamma(1-\alpha)}
    \int_0^{1/\tau}u^{-\alpha}\,du
  \right]
  +\mathcal O\left(\tau^{-1}\|g\|_{C^\alpha([0,T])}\right).
\end{align*}
Since
\[
\int_0^{1/\tau}u^{-\alpha}\,du
  =
  \frac{1}{1-\alpha}\tau^{-(1-\alpha)}
\]
and
$\Gamma(2-\alpha)=(1-\alpha)\Gamma(1-\alpha)$,
we can rewrite the bracket as
\[
\tau^{-(1-\alpha)}
  \left(
    1-\frac{1}{\Gamma(2-\alpha)}
  \right).
\]
Therefore,
\begin{equation}\label{eq:first-cell-mainterm}
  \Delta_{0,\tau}
  =
  \left(
    1-\frac{1}{\Gamma(2-\alpha)}
  \right)\tau^{-(1-\alpha)}g(0)
  +
  \mathcal O\left(\tau^{-1}\|g\|_{C^\alpha([0,T])}\right).
\end{equation}
Since $\alpha\in(1/2,1)$, we have $\tau^{-1}\le\tau^{-\alpha}$ for large $\tau$. Moreover,
$1-\frac{1}{\Gamma(2-\alpha)}
  \to0$
as $\alpha\uparrow1^-$.

\medskip
\noindent
\emph{Remaining cells.}
For $1\le j\le N_\tau-1$, set
\[
  E_j:=
  \left|
    \frac1\tau\left(\frac{j}{\tau}\right)^{-\alpha}g\!\left(\frac{j+1}{\tau}\right)
    -
    \int_{j/\tau}^{(j+1)/\tau}u^{-\alpha}g(u)\,du
  \right|.
\]
Then
\begin{align*}
  E_j
  &\le
  \left|
    \frac1\tau\left(\frac{j}{\tau}\right)^{-\alpha}g\!\left(\frac{j+1}{\tau}\right)
    -
    g\!\left(\frac{j+1}{\tau}\right)
    \int_{j/\tau}^{(j+1)/\tau}u^{-\alpha}\,du
  \right|
  \\
  &\quad+
  \int_{j/\tau}^{(j+1)/\tau}
  u^{-\alpha}
  \left|g\!\left(\frac{j+1}{\tau}\right)-g(u)\right|\,du.
\end{align*}
Using the mean-value theorem for $u^{-\alpha}$ and the $\alpha$-H\"older continuity of $g$,
\[
  E_j
  \le
  C_T\|g\|_{\infty}\tau^{\alpha-2}j^{-\alpha-1}
  +
  C_T\|g\|_{C^\alpha([0,T])}\tau^{-1}j^{-\alpha}.
\]
Summing over $1\le j\le N_\tau-1$, we obtain
\begin{align}
  \sum_{j=1}^{N_\tau-1}E_j
  &\le
  C_T\|f\|_{C^\alpha([0,T])}
  \left(
    \tau^{\alpha-2}\sum_{j=1}^{N_\tau-1}j^{-\alpha-1}
    +
    \tau^{-1}\sum_{j=1}^{N_\tau-1}j^{-\alpha}
  \right)
  \nonumber\\
  &\le
  C_T\|f\|_{C^\alpha([0,T])}
  \left(
    \tau^{\alpha-2}
    +\tau^{-1}N_\tau^{1-\alpha}
  \right)
  \le
  C_T\|f\|_{C^\alpha([0,T])}\tau^{-\alpha}.
  \label{eq:Ej-sum-bound}
\end{align}
The terminal interval $[N_\tau/\tau,T]$ has length at most $\tau^{-1}$, so
\[
  \int_{N_\tau/\tau}^T u^{-\alpha}|g(u)|\,du
  \le C_T\|f\|_\infty \tau^{-1}
  \le C_T\|f\|_{C^\alpha([0,T])}\tau^{-\alpha}.
\]

Combining \eqref{eq:first-cell-mainterm}, \eqref{eq:Ej-sum-bound}, and the tail estimate gives
\begin{equation}\label{eq:singular-riemann-refined}
\begin{aligned}
  &\left|
    \tau^{\alpha-1}\sum_{m=1}^{N_\tau}\overline\varphi(m-1)
    f\!\left(T-\frac{m}{\tau}\right)
    -
    I^{1-\alpha}f(T)
  \right|
  \\
  &\le
  C_T\|f\|_{C^{\alpha}([0,T])}\tau^{-\alpha}
  +
  C_T\left|1-\frac{1}{\Gamma(2-\alpha)}\right|
  \|f\|_{C^{\alpha}([0,T])}\tau^{-(1-\alpha)}.
\end{aligned}
\end{equation}

It remains to reinsert the prefactor $a_\tau$. First,
\begin{align*}
\tau^{\alpha-1}\sum_{m=1}^{N_\tau}\overline\varphi(m-1)
&=
\tau^{\alpha-1}\left(
1+\frac{1}{\Gamma(1-\alpha)}\sum_{j=1}^{N_\tau-1}j^{-\alpha}
\right)
\\
&\le
C_T\left(\tau^{\alpha-1}+\tau^{\alpha-1}N_\tau^{1-\alpha}\right)
\le C_T.
\end{align*}
Therefore,
\begin{align*}
  &\left|
    \xi_0 a_\tau\tau^{\alpha-1}
    \sum_{m=1}^{N_\tau}\overline\varphi(m-1)
    f\!\left(T-\frac{m}{\tau}\right)
    -
    \xi_0\tau^{\alpha-1}
    \sum_{m=1}^{N_\tau}\overline\varphi(m-1)
    f\!\left(T-\frac{m}{\tau}\right)
  \right|
  \\
  &\le
  |\xi_0|\,|a_\tau-1|\,\tau^{\alpha-1}
  \sum_{m=1}^{N_\tau}\overline\varphi(m-1)
  \left|f\!\left(T-\frac{m}{\tau}\right)\right|
\le  C_T\|f\|_{\infty}\tau^{-\alpha}.
\end{align*}
Together with \eqref{eq:singular-riemann-refined}, this yields
\begin{equation}\label{eq:singular-riemann-final}
\begin{aligned}
  &\left|
    \xi_0 a_\tau\tau^{\alpha-1}
    \sum_{m=1}^{N_\tau}\overline\varphi(m-1)
    f\!\left(T-\frac{m}{\tau}\right)
    -
    \xi_0 I^{1-\alpha}f(T)
  \right|
  \\
  &\le
  C_T\|f\|_{C^{\alpha}([0,T])}\tau^{-\alpha}
  +
  C_T\left|1-\frac{1}{\Gamma(2-\alpha)}\right|
  \|f\|_{C^{\alpha}([0,T])}\tau^{-(1-\alpha)}.
\end{aligned}
\end{equation}

Finally, combining \eqref{eq:baseline-split-final}, \eqref{eq:Rtau-bound-final},
\eqref{eq:regular-riemann-final}, and \eqref{eq:singular-riemann-final}, we obtain
\begin{align*}
  &\left|
    \frac{\varepsilon_\tau}{\mu}
    \sum_{m=1}^{N_\tau}\hat\mu_\tau(m)
    f\!\left(T-\frac{m}{\tau}\right)
    -
    \left[
      \gamma\int_0^T f(s)\,ds
      +
      \xi_0 I^{1-\alpha}f(T)
    \right]
  \right|\\
  \le&
  C_T\|f\|_{C^{\alpha}([0,T])}\tau^{-\alpha}
  +
  C_{\mathrm{base}}(\alpha)\|f\|_{C^{\alpha}([0,T])}\tau^{-(1-\alpha)},
\end{align*}
with
\[
  C_{\mathrm{base}}(\alpha)
  :=
  C_T\left|1-\frac{1}{\Gamma(2-\alpha)}\right|.
\]
In particular,
$C_{\mathrm{base}}(\alpha)\to0$
as $\alpha\uparrow1^-$.
This proves \eqref{eq:baseline-riemann}.
\end{proof}
\subsection{Proof of Lemma~\ref{lem:phi-expansion}}\label{app:phi-expansion}

\begin{proof}
Write
\[
  a_n:=\frac{1}{\Gamma(1-\alpha)}\frac{1}{n^\alpha},
  \qquad n\ge1.
\]
From Definition~\ref{def:params}, for $n\ge2$,
$\varphi_n=a_{n-1}-a_n$
and $\varphi_1=1-a_1$. Hence,
\begin{align*}
\sum_{n\ge2}\varphi_n e^{-sn}
&=
\sum_{n\ge2}(a_{n-1}-a_n)e^{-sn}\\
&=
e^{-s}\sum_{m\ge1}a_m e^{-sm}-\sum_{n\ge2}a_n e^{-sn}
=
-\left(1-e^{-s}\right)\sum_{m\ge1}a_m e^{-sm}+a_1 e^{-s}.
\end{align*}
Adding $\varphi_1 e^{-s}=(1-a_1)e^{-s}$ gives the exact identity
\[
\hat\varphi(s)
=
e^{-s}-\left(1-e^{-s}\right)A(s),
\qquad
A(s):=\sum_{m\ge1}a_m e^{-sm}
=
\frac{1}{\Gamma(1-\alpha)}\mathrm{Li}_\alpha(e^{-s}),
\]
where $\mathrm{Li}_\alpha$ is the polylogarithm. For $0<\alpha<1$, the classical expansion
\cite{DLMF} yields
\[
\mathrm{Li}_\alpha(e^{-s})
=
\Gamma(1-\alpha)s^{\alpha-1}+\zeta(\alpha)+\mathcal O(s),
\qquad s\downarrow0.
\]
Therefore
\[
A(s)
=
s^{\alpha-1}+\frac{\zeta(\alpha)}{\Gamma(1-\alpha)}+\mathcal O(s),
\]
and, since
\[
1-e^{-s}=s-\frac{s^2}{2}+\mathcal O(s^3),
\qquad
e^{-s}=1-s+\frac{s^2}{2}+\mathcal O(s^3),
\]
we obtain
\begin{align*}
  1-\hat\varphi(s)
  =
  1-e^{-s}+\left(1-e^{-s}\right)A(s)
  =
  s^\alpha
  +\left(1+\frac{\zeta(\alpha)}{\Gamma(1-\alpha)}\right)s
  +\mathcal O(s^{1+\alpha})+\mathcal O(s^2).
\end{align*}
Because $1+\alpha<2$ for $\alpha\in(0,1)$, the $\mathcal O(s^2)$ term is absorbed into $\mathcal O(s^{1+\alpha})$. This proves \eqref{eq:phi-expansion-main}, with $c_1(\alpha)$ given by \eqref{eq:c1-alpha-main}.

Finally, as $\alpha\uparrow1^-$, we have
$\zeta(\alpha)\sim -\frac{1}{1-\alpha}$,
$\Gamma(1-\alpha)\sim \frac{1}{1-\alpha}$,
and therefore
$\frac{\zeta(\alpha)}{\Gamma(1-\alpha)}\to-1$.
This proves the limit statement in Lemma~\ref{lem:phi-expansion}.
\end{proof}


\subsection{Proof of Proposition~\ref{prop:exact-transform-weak}}\label{app:weak-euro-exact}
We use the branching--immigration representation of the Poisson INAR($\infty$) process.

Fix $z\in\mathcal D_\tau(T)$. Consider first a single ``$+$''-immigrant born at the current time. Its direct contribution to $zP_T^\tau$ is $\Theta_+^\tau(z)$, hence the factor $e^{\Theta_+^\tau(z)}$.

Now fix a lag $k\in\{1,\dots,n\}$. Because $q_k^{\tau,+}$ is the lag-$k$ coefficient associated with a ``$+$''-parent, the numbers of ``$+$''-children and ``$-$''-children produced at lag $k$ are independent Poisson random variables, both with mean $q_k^{\tau,+}$. Each such child starts an independent subcluster with remaining horizon $n-k$. Therefore, by the Poisson generating-function identity
$\E\left[s^{\mathrm{Poisson}(\lambda)}\right]=e^{\lambda(s-1)}$,
the lag-$k$ continuation factor equals
\[
  \exp\!\left(q_k^{\tau,+}(\mathcal M_{n-k}^{\tau,+}(z)-1)\right)\,
  \exp\!\left(q_k^{\tau,+}(\mathcal M_{n-k}^{\tau,-}(z)-1)\right)
  =
  \exp\!\left(q_k^{\tau,+}G_{n-k}^\tau(z)\right).
\]
Multiplying these independent continuation factors over $k=1,\dots,n$, and including the ancestor contribution $e^{\Theta_+^\tau(z)}$, we obtain
\[
  \mathcal M_n^{\tau,+}(z)
  =
  e^{\Theta_+^\tau(z)}
  \prod_{k=1}^n
  \exp\!\left(q_k^{\tau,+}G_{n-k}^\tau(z)\right)
  =
  \exp\!\left(
    \Theta_+^\tau(z)
    +\sum_{k=1}^n q_k^{\tau,+}G_{n-k}^\tau(z)
  \right),
\]
which is exactly \eqref{eq:Mplus-exact}.

The proof of \eqref{eq:Mminus-exact} is identical. For a single ``$-$''-immigrant, the lag-$k$ numbers of ``$+$''- and ``$-$''-children are independent Poisson random variables with common mean $q_k^{\tau,-}$, so the lag-$k$ continuation factor is
\[
  \exp\!\left(q_k^{\tau,-}(\mathcal M_{n-k}^{\tau,+}(z)-1)\right)\,
  \exp\!\left(q_k^{\tau,-}(\mathcal M_{n-k}^{\tau,-}(z)-1)\right)
  =
  \exp\!\left(q_k^{\tau,-}G_{n-k}^\tau(z)\right),
\]
which gives \eqref{eq:Mminus-exact}.

Now define $G_n^\tau(z):=\mathcal M_n^{\tau,+}(z)+\mathcal M_n^{\tau,-}(z)-2$. Summing \eqref{eq:Mplus-exact} and \eqref{eq:Mminus-exact} and subtracting $2$ yields the closed recursion \eqref{eq:G-rec-exact}.

Finally, at each time $m\in\{1,\dots,N_\tau\}$, the numbers of ``$+$''- and ``$-$''-immigrants are independent Poisson random variables with common mean $\hat\mu_\tau(m)$, and clusters generated by distinct immigrants are independent. Hence, again by the Poisson generating-function identity,
\[
  \Phi^\tau(z,T)
  =
  \prod_{m=1}^{N_\tau}
  \exp\!\left(
    \hat\mu_\tau(m)\left(\mathcal M_{N_\tau-m}^{\tau,+}(z)-1\right)
  \right)
  \exp\!\left(
    \hat\mu_\tau(m)\left(\mathcal M_{N_\tau-m}^{\tau,-}(z)-1\right)
  \right).
\]
Combining the two exponential factors and using the definition of $G_n^\tau(z)$, we obtain
\[
  \Phi^\tau(z,T)
  =
  \exp\!\left(
    \sum_{m=1}^{N_\tau}\hat\mu_\tau(m)\,
    \left(\mathcal M_{N_\tau-m}^{\tau,+}(z)+\mathcal M_{N_\tau-m}^{\tau,-}(z)-2\right)
  \right),
\]
which is exactly \eqref{eq:Phi-exact}. This completes the proof.


\subsection{Uniform strip smallness}\label{app:weak-euro-second-order}

\begin{proposition}[Uniform strip smallness]
\label{prop:smallness-new33}
For every fixed $\eta\in\mathbb R$ and $R>0$, there exist $\tau_0\ge1$ and a
constant $C_{\eta,R,T}>0$ such that for all $\tau\ge\tau_0$,
with $\varepsilon_\tau:=1-a_\tau=\gamma\tau^{-\alpha}$,
\begin{equation}\label{eq:G-small-new33}
  \sup_{0\le n\le N_\tau}\sup_{z\in\Gamma_{\eta,R}}
  |G_n^\tau(z)|
  \le C_{\eta,R,T}\,\varepsilon_\tau,
\end{equation}
and
\begin{equation}\label{eq:AB-small-new33}
  \sup_{0\le n\le N_\tau}\sup_{z\in\Gamma_{\eta,R}}
  \left(|A_n^\tau(z)|+|B_n^\tau(z)|\right)
  \le C_{\eta,R,T}\,\varepsilon_\tau.
\end{equation}
In particular, after possibly enlarging the constant $C_{\eta,R,T}$,
\begin{equation}\label{eq:SD-small-new33}
  \sup_{0\le n\le N_\tau}\sup_{z\in\Gamma_{\eta,R}}
  \left(|S_n^\tau(z)|+|D_n^\tau(z)|\right)
  \le C_{\eta,R,T}\,\varepsilon_\tau.
\end{equation}
\end{proposition}

\begin{proof}
We proceed by induction on $n$.

\medskip
\noindent
\emph{Step 1: the case $n=0$.}
By definition,
\[
  A_0^\tau(z)=\Theta_+^\tau(z),\qquad B_0^\tau(z)=\Theta_-^\tau(z),
\]
hence
\[
  |A_0^\tau(z)|+|B_0^\tau(z)|
  \le |z|\,(2c_\tau+d_\tau)
  \le C\varepsilon_\tau
\]
uniformly on $\Gamma_{\eta,R}$, because $c_\tau\asymp \varepsilon_\tau$ and
$d_\tau\asymp \varepsilon_\tau^2$.

Since $A_0^\tau(z),B_0^\tau(z)\to0$ uniformly on $\Gamma_{\eta,R}$, the Taylor expansion gives
\[
  e^{A_0^\tau(z)}+e^{B_0^\tau(z)}-2
  =A_0^\tau(z)+B_0^\tau(z)
   +\frac12\left((A_0^\tau(z))^2+(B_0^\tau(z))^2\right)
   +\mathcal O(\varepsilon_\tau^3),
\]
uniformly on $\Gamma_{\eta,R}$. Therefore,
\[
  \sup_{z\in\Gamma_{\eta,R}}|G_0^\tau(z)|\le C\varepsilon_\tau.
\]
This proves \eqref{eq:G-small-new33} and \eqref{eq:AB-small-new33} at $n=0$.

\medskip
\noindent
\emph{Step 2: an analytic estimate for the nonlinear map.}
Define
\[
  F(a,b):=e^a+e^b-2.
\]
Since $F$ is analytic near $(0,0)$ and
\[
  F(a,b)=a+b+\frac12(a^2+b^2)+O(|a|^3+|b|^3),
\]
there exist constants $\delta>0$ and $C_\delta>0$ such that whenever
$|a|+|b|\le \delta$,
\begin{equation}\label{eq:F-est-new33}
  |F(a,b)|\le |a+b|+C_\delta\left(|a|^2+|b|^2\right).
\end{equation}

\medskip
\noindent
\emph{Step 3: induction step.}
Assume that for some $n\ge1$,
\begin{equation}\label{eq:ind-hyp-G-new33}
  \sup_{0\le j\le n-1}\sup_{z\in\Gamma_{\eta,R}}|G_j^\tau(z)|
  \le M\varepsilon_\tau
\end{equation}
for some constant $M>0$ to be fixed later.

Then from \eqref{eq:AB-def-new33},
\begin{align*}
  |A_n^\tau(z)|
  \le |\Theta_+^\tau(z)|+\sum_{k=1}^n q_k^{\tau,+}|G_{n-k}^\tau(z)|
  \le M_{\eta,R}(c_\tau+d_\tau)+\frac{a_\tau}{1+\beta}M\varepsilon_\tau
   \le C\varepsilon_\tau,
\end{align*}
and similarly
\begin{align*}
  |B_n^\tau(z)|
  \le |\Theta_-^\tau(z)|+\sum_{k=1}^n q_k^{\tau,-}|G_{n-k}^\tau(z)|
 \le M_{\eta,R}c_\tau+\frac{\beta a_\tau}{1+\beta}M\varepsilon_\tau
   \le C\varepsilon_\tau.
\end{align*}
Hence
\[
  \sup_{z\in\Gamma_{\eta,R}}\left(|A_n^\tau(z)|+|B_n^\tau(z)|\right)\le C\varepsilon_\tau.
\]
After enlarging $\tau_0$ if necessary, we may suppose that
\[
  |A_n^\tau(z)|+|B_n^\tau(z)|\le \delta
\]
for all $\tau\ge\tau_0$, all $0\le n\le N_\tau$, and all $z\in\Gamma_{\eta,R}$, so that
\eqref{eq:F-est-new33} applies.

Now use \eqref{eq:F-est-new33} and \eqref{eq:G-from-AB-new33}:
\[
  |G_n^\tau(z)|
  \le |A_n^\tau(z)+B_n^\tau(z)|
      +C\left(|A_n^\tau(z)|^2+|B_n^\tau(z)|^2\right).
\]
Since
\[
  A_n^\tau(z)+B_n^\tau(z)=S_n^\tau(z)
  =-zd_\tau+\sum_{k=1}^n\varphi_k^\tau\,G_{n-k}^\tau(z),
\]
we obtain
\begin{align*}
  |A_n^\tau(z)+B_n^\tau(z)|
  \le M_{\eta,R}d_\tau
      +\sum_{k=1}^n\varphi_k^\tau\,|G_{n-k}^\tau(z)|
      \le C\varepsilon_\tau^2+a_\tau M\varepsilon_\tau.
\end{align*}
Moreover, because $A_n^\tau(z),B_n^\tau(z)=\mathcal O(\varepsilon_\tau)$ uniformly,
\[
  |A_n^\tau(z)|^2+|B_n^\tau(z)|^2\le C\varepsilon_\tau^2.
\]
Hence,
\[
  |G_n^\tau(z)|\le a_\tau M\varepsilon_\tau + C\varepsilon_\tau^2
  = \left((1-\varepsilon_\tau)M + C\varepsilon_\tau\right)\varepsilon_\tau.
\]
Choose $M\ge C$. Then
$(1-\varepsilon_\tau)M + C\varepsilon_\tau \le M$,
so that
\[
  |G_n^\tau(z)|\le M\varepsilon_\tau.
\]
This closes the induction and proves \eqref{eq:G-small-new33}.

Finally, \eqref{eq:AB-small-new33} follows from the bounds on $A_n^\tau,B_n^\tau$ obtained
above. The bound \eqref{eq:SD-small-new33} then follows, after possibly enlarging the
constant, from
\[
  S_n^\tau=A_n^\tau+B_n^\tau,\qquad D_n^\tau=A_n^\tau-B_n^\tau.
\]
\end{proof}


\subsection{Proof of Proposition~\ref{prop:reduced-recursion-new33}}\label{app:proof-reduced-recursion-new33}
Set
\[
  U_n^\tau(z):=\sum_{k=1}^n \varphi_k^\tau\,G_{n-k}^\tau(z).
\]
Then
\[
  A_n^\tau(z)=z(c_\tau-d_\tau)+\frac{1}{1+\beta}U_n^\tau(z),
\qquad
  B_n^\tau(z)=-zc_\tau+\frac{\beta}{1+\beta}U_n^\tau(z).
\]

By Proposition~\ref{prop:smallness-new33}, the quantities $A_n^\tau(z)$ and $B_n^\tau(z)$
are uniformly $\mathcal O(\varepsilon_\tau)$ on $\{0,\dots,N_\tau\}\times\Gamma_{\eta,R}$.
Hence, for $\tau$ large enough, they belong to a fixed complex neighborhood of the origin on
which the third-order Taylor formula for the exponential map is uniform:
\[
  e^x = 1+x+\frac{x^2}{2}+\rho(x),
  \qquad |\rho(x)|\le C|x|^3.
\]
Applying this with $x=A_n^\tau(z)$ and $x=B_n^\tau(z)$, and subtracting $2$, yields
\[
  G_n^\tau(z)
  =A_n^\tau(z)+B_n^\tau(z)
   +\frac12\left((A_n^\tau(z))^2+(B_n^\tau(z))^2\right)
   +r_n^\tau(z),
\]
where
\[
  |r_n^\tau(z)|\le C\left(|A_n^\tau(z)|^3+|B_n^\tau(z)|^3\right)\le C\varepsilon_\tau^3
\]
uniformly for $0\le n\le N_\tau$ and $z\in\Gamma_{\eta,R}$.

The linear term is
\[
  A_n^\tau(z)+B_n^\tau(z)=U_n^\tau(z)-zd_\tau.
\]
For the quadratic term, a direct computation gives
\begin{align*}
  \frac12\left((A_n^\tau(z))^2+(B_n^\tau(z))^2\right)
  &=
    \frac12\left((z(c_\tau-d_\tau))^2+z^2c_\tau^2\right) \\
  &\quad
    + \frac{z}{1+\beta}\left((c_\tau-d_\tau)-\beta c_\tau\right)U_n^\tau(z)
    + \frac{1+\beta^2}{2(1+\beta)^2}\left(U_n^\tau(z)\right)^2.
\end{align*}
Since $d_\tau=\mathcal O(\varepsilon_\tau^2)$ and $c_\tau=\mathcal O(\varepsilon_\tau)$, we have
\[
  \frac12\left((z(c_\tau-d_\tau))^2+z^2c_\tau^2\right)
  = c_\tau^2 z^2 + \mathcal O(\varepsilon_\tau^3),
\]
uniformly on $\Gamma_{\eta,R}$, and
\[
  \frac{z}{1+\beta}\left((c_\tau-d_\tau)-\beta c_\tau\right)
  = \frac{1-\beta}{1+\beta}\,c_\tau z + \mathcal O(\varepsilon_\tau^2)
\]
uniformly on $\Gamma_{\eta,R}$.
Moreover, by Proposition~\ref{prop:smallness-new33},
$U_n^\tau(z)=\mathcal O(\varepsilon_\tau)$
uniformly on $\{0,\dots,N_\tau\}\times\Gamma_{\eta,R}$, and thus the $\mathcal O(\varepsilon_\tau^2)$
coefficient error in the cross term contributes only $\mathcal O(\varepsilon_\tau^3)$.
Collecting the terms proves \eqref{eq:G-reduced-new33}--\eqref{eq:R-reduced-new33}.

\subsection{Equivalent resolvent reformulation}\label{app:proof-barH}

The three lemmas in this subsection are algebraic rewrites. Lemma~\ref{lem:barH}
replaces the averaged state variable by the current state up to a lower-order
error. Lemma~\ref{lem:renewal-inversion} rewrites the discrete recursion in
resolvent form, and Lemma~\ref{lem:continuous-resolvent} gives the analogous
continuous resolvent form. Together they reduce the comparison to matching the
two resolvent kernels.

\begin{lemma}
\label{lem:barH}
With the convention that the empty sum is zero when $n=0$, uniformly for
$0\le n\le N_\tau$ and $z\in\Gamma_{\eta,R}$,
\begin{equation}\label{eq:H-barH-close}
  \left|
    H_n^\tau(z)-\sum_{k=1}^n\varphi_k H_{n-k}^\tau(z)
  \right|
  \le C_{\eta,R,T}\tau^{-\alpha}.
\end{equation}
Consequently,
\begin{equation}\label{eq:Htau-rec-local}
  H_n^\tau(z)
  =
  \sum_{k=1}^n\varphi_k^\tau H_{n-k}^\tau(z)
  +\tau^{-\alpha}F_z(H_n^\tau(z))
  +\widehat E_n^\tau(z),
\end{equation}
where
\begin{equation}\label{eq:Ehat-bound}
  \sup_{0\le n\le N_\tau}\sup_{z\in\Gamma_{\eta,R}}
  |\widehat E_n^\tau(z)|
  \le C_{\eta,R,T}\tau^{-2\alpha}.
\end{equation}
\end{lemma}

\begin{proof}

We first treat the case $n=0$. Since
\[
  H_0^\tau(z)=\frac{\mu}{\theta\varepsilon_\tau}G_0^\tau(z),
  \qquad
  G_0^\tau(z)=e^{\Theta_+^\tau(z)}+e^{\Theta_-^\tau(z)}-2,
\]
with $\Theta_+^\tau(z)=z(c_\tau-d_\tau)$ and $\Theta_-^\tau(z)=-zc_\tau$,
and since $c_\tau=\mathcal O(\varepsilon_\tau)$, $d_\tau=\mathcal O(\varepsilon_\tau^2)$,
the linear terms cancel and therefore
\[
  \sup_{z\in\Gamma_{\eta,R}}|G_0^\tau(z)|\le C_{\eta,R,T}\varepsilon_\tau^2.
\]
Hence,
\[
\sup_{z\in\Gamma_{\eta,R}}|H_0^\tau(z)|
  \le C_{\eta,R,T}\varepsilon_\tau
  \le C_{\eta,R,T}\tau^{-\alpha}.
\]
Thus \eqref{eq:H-barH-close} holds for $n=0$.

Now let $1\le n\le N_\tau$. By Proposition~\ref{prop:smallness-new33} and the rescaling
\eqref{eq:H-rescaled-def},
\[
  \sup_{0\le n\le N_\tau}\sup_{z\in\Gamma_{\eta,R}}
  |H_n^\tau(z)|\le C_{\eta,R,T}.
\]
Hence,
\[
  \sup_{0\le n\le N_\tau}\sup_{z\in\Gamma_{\eta,R}}
  \left|\sum_{k=1}^n\varphi_k H_{n-k}^\tau(z)\right|
  \le C_{\eta,R,T},
\]
since $\sum_{k\ge1}\varphi_k=1$.

Subtracting $\sum_{k=1}^n\varphi_kH_{n-k}^\tau(z)$ from both sides of
\eqref{eq:Htau-rec-main-again} gives
\begin{align*}
  &H_n^\tau(z)-\sum_{k=1}^n\varphi_k H_{n-k}^\tau(z)\\
  &=
  -(1-a_\tau)\sum_{k=1}^n\varphi_k H_{n-k}^\tau(z)
  +\tau^{-\alpha}
   F_z\!\left(\sum_{k=1}^n\varphi_k H_{n-k}^\tau(z)\right)
  +E_n^\tau(z).
\end{align*}
Using $1-a_\tau=\gamma\tau^{-\alpha}$, the boundedness of the convolution term,
the boundedness of $F_z$ on the relevant range, and \eqref{eq:En-bound-again},
we obtain \eqref{eq:H-barH-close}.

Since $F_z$ is quadratic in $x$ and the relevant arguments remain uniformly
bounded on $\Gamma_{\eta,R}$, $F_z$ is uniformly Lipschitz on the relevant
range. Hence, by \eqref{eq:H-barH-close},
\[
  \left|
    F_z\!\left(\sum_{k=1}^n\varphi_k H_{n-k}^\tau(z)\right)-F_z(H_n^\tau(z))
  \right|
  \le C_{\eta,R,T}
  \left|
    \sum_{k=1}^n\varphi_k H_{n-k}^\tau(z)-H_n^\tau(z)
  \right|
  \le C_{\eta,R,T}\tau^{-\alpha}.
\]
Multiplying by $\tau^{-\alpha}$ and using
\[
  a_\tau\sum_{k=1}^n\varphi_k H_{n-k}^\tau(z)
  =\sum_{k=1}^n\varphi_k^\tau H_{n-k}^\tau(z),
\]
we obtain \eqref{eq:Htau-rec-local} with
\[
  \widehat E_n^\tau(z)
  :=
  E_n^\tau(z)
  +\tau^{-\alpha}
  \left[
    F_z\!\left(\sum_{k=1}^n\varphi_k H_{n-k}^\tau(z)\right)-F_z(H_n^\tau(z))
  \right].
\]
The bound \eqref{eq:Ehat-bound} follows immediately.
\end{proof}



\begin{lemma}[Renewal inversion]
\label{lem:renewal-inversion}
Let
\[
  \psi^\tau:=\sum_{m\ge1}(\varphi^\tau)^{*m}
  \qquad\text{with}\qquad
  \varphi^\tau=a_\tau\varphi
\]
be the renewal kernel associated with $\varphi^\tau$, and define
\begin{equation}\label{eq:zeta-discrete-def}
  \mathsf S_k^\tau
  :=
  \frac{\tau(1-a_\tau)}{a_\tau}\psi_k^\tau,
  \qquad k\ge1.
\end{equation}
Then \eqref{eq:Htau-rec-local} is equivalent to
\begin{equation}\label{eq:Htau-resolvent-discrete}
\begin{aligned}
  H_n^\tau(z)
  &=
  \tau^{-\alpha}F_z(H_n^\tau(z))
  +\widehat E_n^\tau(z)
\\
  &\quad
  +
  \frac{a_\tau}{1-a_\tau}\tau^{-\alpha-1}
  \sum_{j=1}^{n-1}\mathsf S_{n-j}^\tau F_z(H_j^\tau(z))
  +
  \sum_{j=1}^{n-1}\psi_{n-j}^\tau\widehat E_j^\tau(z),
\end{aligned}
\end{equation}
for every $1\le n\le N_\tau$.
\end{lemma}

\begin{proof}
Equation \eqref{eq:Htau-rec-local} is a discrete renewal equation of the form
\[
  H^\tau = U^\tau + \varphi^\tau * H^\tau,
\]
where
\[
  U_n^\tau(z):=\tau^{-\alpha}F_z(H_n^\tau(z))+\widehat E_n^\tau(z).
\]
Since $\|\varphi^\tau\|_1=a_\tau<1$, the associated renewal kernel
$\psi^\tau:=\sum_{m\ge1}(\varphi^\tau)^{*m}$
is well defined in $\ell^1$. Iterating the renewal equation and summing the
Neumann series yields
\[
  H^\tau = U^\tau + \psi^\tau * U^\tau,
\]
that is,
\[
  H_n^\tau(z)
  =
  U_n^\tau(z)
  +\sum_{j=1}^{n-1}\psi_{n-j}^\tau U_j^\tau(z),
  \qquad n\ge1.
\]
Substituting the definition of $U_j^\tau(z)$ yields
\[
  H_n^\tau(z)
  =
  \tau^{-\alpha}F_z(H_n^\tau(z))
  +\widehat E_n^\tau(z)
  +\sum_{j=1}^{n-1}\psi_{n-j}^\tau
   \left(
     \tau^{-\alpha}F_z(H_j^\tau(z))+\widehat E_j^\tau(z)
   \right).
\]
Using \eqref{eq:zeta-discrete-def},
\[
  \psi_{n-j}^\tau
  =
  \frac{a_\tau}{1-a_\tau}\frac1\tau \mathsf{S}_{n-j}^\tau,
\]
we obtain \eqref{eq:Htau-resolvent-discrete}.
\end{proof}



\begin{lemma}[Continuous resolvent form]
\label{lem:continuous-resolvent}
For each $z$ in the admissible strip under consideration, the solution of the
continuous fractional Riccati equation admits the representation
\begin{equation}\label{eq:continuous-resolvent}
  h(t,z)
  =
  \frac1\gamma\int_0^t f_{\alpha,\gamma}(t-s)F_z(h(s,z))\,ds,
  \qquad 0\le t\le T.
\end{equation}
\end{lemma}

\begin{proof}
Starting from \eqref{eq:riccati-volterra-weak}, we rewrite it as
\[
  h(t,z)
  =
  \frac1{\Gamma(\alpha)}\int_0^t (t-s)^{\alpha-1}
  \left(-\gamma h(s,z)+F_z(h(s,z))\right)\,ds.
\]
Taking Laplace transforms gives
\[
  \widehat h(\lambda,z)
  =
  \lambda^{-\alpha}
  \left(
    -\gamma \widehat h(\lambda,z)
    +\mathcal L_t[F_z(h(\cdot,z))](\lambda)
  \right),
\]
and hence,
\[
  \widehat h(\lambda,z)
  =
  \frac{1}{\lambda^\alpha+\gamma}
  \mathcal L_t[F_z(h(\cdot,z))](\lambda).
\]
Since
\[
  \mathcal L\!\left\{\frac1\gamma f_{\alpha,\gamma}\right\}(\lambda)
  =
  \frac{1}{\lambda^\alpha+\gamma},
\]
the claim follows by Laplace inversion.
\end{proof}


\subsection{Lemma~\ref{lem:cum-resolvent-renewal}}\label{app:proof-cum-resolvent-renewal}

\begin{lemma}[Discrete cumulative renewal equation]
\label{lem:cum-resolvent-renewal}
For all $n\ge1$,
\begin{equation}\label{eq:cum-resolvent-renewal}
  \mathcal G_n^\tau
  =
  a_\tau\sum_{k=1}^n\varphi_k\,\mathcal G_{n-k}^\tau
  +\tau^{-\alpha}\sum_{k=1}^n\varphi_k.
\end{equation}
Moreover,
\begin{equation}\label{eq:cum-resolvent-uniform}
  \sup_{n\ge0}\mathcal G_n^\tau\le\frac1\gamma.
\end{equation}
\end{lemma}

\begin{proof}
Let
\[
  \Psi_\tau(q):=\sum_{n\ge1}\psi_n^\tau q^n
  =\frac{a_\tau\Phi(q)}{1-a_\tau\Phi(q)},
  \qquad
  \Phi(q):=\sum_{m\ge1}\varphi_m q^m.
\]
Then
\[
  \sum_{n\ge1}\left(\sum_{k=1}^n\psi_k^\tau\right)q^n
  =\frac{\Psi_\tau(q)}{1-q}
  =\frac{a_\tau\Phi(q)}{(1-q)(1-a_\tau\Phi(q))}.
\]
Multiplying by $(1-a_\tau)/(\gamma a_\tau)$, we obtain
\[
  \sum_{n\ge1}\mathcal G_n^\tau q^n
  =
  \frac{1-a_\tau}{\gamma}\,
  \frac{\Phi(q)}{(1-q)(1-a_\tau\Phi(q))}.
\]
Hence,
\[
  (1-a_\tau\Phi(q))\sum_{n\ge1}\mathcal G_n^\tau q^n
  =
  \frac{1-a_\tau}{\gamma}\frac{\Phi(q)}{1-q}.
\]
Comparing coefficients of $q^n$ gives
\[
  \mathcal G_n^\tau
  =
  a_\tau\sum_{k=1}^n\varphi_k\,\mathcal G_{n-k}^\tau
  +\frac{1-a_\tau}{\gamma}\sum_{k=1}^n\varphi_k.
\]
Since $(1-a_\tau)/\gamma=\tau^{-\alpha}$, this proves
\eqref{eq:cum-resolvent-renewal}.

Because $\varphi_k\ge0$, $a_\tau\in(0,1)$, and
$0\le \sum_{k=1}^n\varphi_k\le 1$,
we obtain
\[
  \mathcal G_n^\tau
  \le a_\tau \max_{0\le j\le n-1}\mathcal G_j^\tau+\tau^{-\alpha}.
\]
Setting $M_n^\tau:=\max_{0\le j\le n}\mathcal G_j^\tau$, we get
\[
  M_n^\tau\le a_\tau M_{n-1}^\tau+\tau^{-\alpha}.
\]
Since $M_0^\tau=0$, iteration yields
\[
  M_n^\tau
  \le \tau^{-\alpha}\sum_{r=0}^{n-1}a_\tau^r
  \le \frac{\tau^{-\alpha}}{1-a_\tau}
  =\frac1\gamma.
\]
This proves \eqref{eq:cum-resolvent-uniform}.
\end{proof}


\subsection{Lemma~\ref{lem:cum-resolvent-laplace}}\label{app:proof-cum-resolvent-laplace}

\begin{lemma}[Laplace representation]
\label{lem:cum-resolvent-laplace}
Define the piecewise-constant interpolation
\[
  \mathcal G^\tau(t):=\mathcal G_n^\tau,
  \qquad t\in[n/\tau,(n+1)/\tau).
\]
Then, for any $z>0$,
\begin{equation}\label{eq:cum-resolvent-laplace-simplified}
  \widehat{\mathcal G^\tau}(z)
  =
  \frac{1-a_\tau}{\gamma z}\,
  \frac{\hat\varphi(z/\tau)}{1-a_\tau\hat\varphi(z/\tau)}.
\end{equation}
\end{lemma}

\begin{proof}
Let
\[
  q:=e^{-z/\tau}\in(0,1).
\]
Since $\mathcal G^\tau(t)=\mathcal G_n^\tau$ on $[n/\tau,(n+1)/\tau)$, we have
\begin{align*}
  \widehat{\mathcal G^\tau}(z)
  =
  \sum_{n\ge0}\mathcal G_n^\tau
  \int_{n/\tau}^{(n+1)/\tau}e^{-zt}\,dt
  =
  \frac{1-q}{z}\sum_{n\ge0}\mathcal G_n^\tau q^n.
\end{align*}
Since $\mathcal G_0^\tau=0$, Lemma~\ref{lem:cum-resolvent-renewal} yields
\[
  \sum_{n\ge0}\mathcal G_n^\tau q^n
  =
  \sum_{n\ge1}\mathcal G_n^\tau q^n
  =
  \frac{1-a_\tau}{\gamma}\,
  \frac{\Phi(q)}{(1-q)(1-a_\tau\Phi(q))},
\]
where $\Phi(q):=\sum_{m\ge1}\varphi_m q^m$. Therefore,
\[
  \widehat{\mathcal G^\tau}(z)
  =
  \frac{1-a_\tau}{\gamma z}\,
  \frac{\Phi(q)}{1-a_\tau\Phi(q)}.
\]
Since $\Phi(e^{-z/\tau})=\hat\varphi(z/\tau)$, this is exactly
\eqref{eq:cum-resolvent-laplace-simplified}.
\end{proof}


\subsection{Lemma~\ref{lem:cum-resolvent-laplace-domain-comparison}}\label{app:proof-cum-resolvent-laplace-domain-comparison}

\begin{lemma}[Laplace-domain consistency]
\label{lem:cum-resolvent-laplace-domain-comparison}
Let $K\subset(0,\infty)$ be compact. Then uniformly for $z\in K$,
\[
  \widehat{\mathcal G^\tau}(z)-\widehat{\mathcal G}(z)
  =
  \mathcal O(\tau^{-\alpha})
  +
  \mathcal O\!\left(|c_1(\alpha)|\,\tau^{-(1-\alpha)}\right),
\]
where
\[
  \widehat{\mathcal G}(z)=\frac{1}{z(\gamma+z^\alpha)}.
\]
\end{lemma}

\begin{proof}
Set $s:=z/\tau$. By Lemma~\ref{lem:phi-expansion},
\[
  \hat\varphi(s)
  =
  1-s^\alpha-c_1(\alpha)s+\mathcal O(s^{1+\alpha}),
\]
uniformly for $z\in K$ as $\tau\to\infty$, because $K$ is compact. Therefore
\begin{align*}
  1-a_\tau\hat\varphi(s)
  &=
  (1-a_\tau)+a_\tau(1-\hat\varphi(s))
  \\
  &=
  \gamma\tau^{-\alpha}
  +a_\tau z^\alpha\tau^{-\alpha}
  +a_\tau c_1(\alpha)z\tau^{-1}
  +\mathcal O(\tau^{-1-\alpha})
  \\
  &=
  \tau^{-\alpha}
  \left(
    \gamma+z^\alpha+c_1(\alpha)z\tau^{-(1-\alpha)}
    +\mathcal O(\tau^{-\alpha})
  \right),
\end{align*}
again uniformly for $z\in K$. Since $K\subset(0,\infty)$, the quantity
$\gamma+z^\alpha$ is bounded away from $0$ on $K$. Therefore, for $\tau$ large
enough,
\[
  \left|1-a_\tau\hat\varphi(z/\tau)\right|
  \ge c_K\tau^{-\alpha}
\]
for some $c_K>0$.
Using Lemma~\ref{lem:cum-resolvent-laplace},
\[
  \widehat{\mathcal G^\tau}(z)
  =
  \frac{1-a_\tau}{\gamma z}\,
  \frac{\hat\varphi(s)}{1-a_\tau\hat\varphi(s)}
  =
  \frac{\hat\varphi(s)}
       {z\left(
         \gamma+z^\alpha+c_1(\alpha)z\tau^{-(1-\alpha)}
         +\mathcal O(\tau^{-\alpha})
       \right)}.
\]
Since
\[
  \hat\varphi(s)=1+\mathcal O(\tau^{-\alpha})
\]
uniformly on $K$, and since
\[
  \frac{1}{A+B}
  =
  \frac{1}{A}+\mathcal O\!\left(\frac{B}{A^2}\right)
\]
for $A=\gamma+z^\alpha$ and
$B=c_1(\alpha)z\tau^{-(1-\alpha)}+\mathcal O(\tau^{-\alpha})$,
a first-order expansion of the reciprocal around $\gamma+z^\alpha$ yields
\[
  \widehat{\mathcal G^\tau}(z)
  =
  \frac{1}{z(\gamma+z^\alpha)}
  +
  \mathcal O(\tau^{-\alpha})
  +
  \mathcal O\!\left(|c_1(\alpha)|\,\tau^{-(1-\alpha)}\right),
\]
uniformly for $z\in K$.
\end{proof}


\subsection{Proposition~\ref{prop:riccati-regularity}}\label{app:riccati-regularity}

\begin{proposition}[Bounded-strip regularity of the continuous fractional Riccati solution]
\label{prop:riccati-regularity}
Fix $\eta\in\R$ and $R>0$ such that
\[
  \Gamma_{\eta,R}:=\{\eta+\ii\xi:\ |\xi|\le R\}\subset \mathcal S_T.
\]
Under Assumption~\ref{ass:riccati-bound}, the map $(t,z)\mapsto h(t,z)$ is
jointly continuous on $[0,T]\times\Gamma_{\eta,R}$, each map
$t\mapsto h(t,z)$ belongs to $AC([0,T])\cap C^1((0,T])$, and there exists a
constant $C_{\eta,R,T}>0$ such that, uniformly for $z\in\Gamma_{\eta,R}$,
\begin{align}
  |h(t,z)| &\le C_{\eta,R,T}t^\alpha,
  &&0\le t\le T,\label{eq:h-small-t-prop}\\
  |h(t,z)-h(s,z)| &\le C_{\eta,R,T}|t-s|^\alpha,
  &&0\le s,t\le T,\label{eq:h-holder-t-prop}\\
  |\partial_t h(t,z)| &\le C_{\eta,R,T}t^{\alpha-1},
  &&0<t\le T.\label{eq:h-derivative-t-prop}
\end{align}
Moreover, the map $z\mapsto h(\cdot,z)$ is locally Lipschitz from
$\Gamma_{\eta,R}$ into $C([0,T])$.
\end{proposition}

\begin{proof}
Throughout the proof, $C_{\eta,R,T}>0$ denotes a generic constant that may
change from line to line but depends only on $(\eta,R,T)$ and the fixed model
parameters.

Let
\[
  F_z(x):=\frac12(z^2-z)+\gamma(\rho\nu z-1)x+\frac{(\gamma\nu)^2}{2}x^2,
  \qquad x\in\C.
\]
Then the Volterra form of the fractional Riccati equation reads
\begin{equation}\label{eq:riccati-volterra-proof-reg}
  h(t,z)
  =
  \frac{1}{\Gamma(\alpha)}
  \int_0^t (t-s)^{\alpha-1}F_z(h(s,z))\,ds,
  \qquad 0\le t\le T.
\end{equation}

Since $\Gamma_{\eta,R}$ is compact and
\[
  \sup_{z\in\Gamma_{\eta,R}}\sup_{0\le t\le T}|h(t,z)|
  \le M_{\eta,R,T},
\]
the polynomial $F_z(x)$ and its derivative in $x$ are uniformly bounded on the
set
\[
  \{(z,x): z\in\Gamma_{\eta,R},\ |x|\le M_{\eta,R,T}\}.
\]
Hence there exists a constant $C_{\eta,R,T}>0$ such that
\begin{equation}\label{eq:F-uniform-bound-proof-reg}
  |F_z(h(t,z))|\le C_{\eta,R,T},
  \qquad
  0\le t\le T,\ z\in\Gamma_{\eta,R},
\end{equation}
and
\begin{equation}\label{eq:F-lipschitz-proof-reg}
  |F_z(x)-F_z(y)|\le C_{\eta,R,T}|x-y|,
\end{equation}
for all $z\in\Gamma_{\eta,R}$ and all $x,y$ with $|x|,|y|\le M_{\eta,R,T}$.

We divide the proof into four steps.

\medskip
\noindent
\textit{Step 1: small-time bound.}
By \eqref{eq:riccati-volterra-proof-reg} and \eqref{eq:F-uniform-bound-proof-reg},
\begin{align*}
  |h(t,z)|
  &\le
  \frac{1}{\Gamma(\alpha)}
  \int_0^t (t-s)^{\alpha-1}|F_z(h(s,z))|\,ds\\
  &\le
  \frac{C_{\eta,R,T}}{\Gamma(\alpha)}
  \int_0^t (t-s)^{\alpha-1}\,ds
  =
  \frac{C_{\eta,R,T}}{\Gamma(\alpha+1)}\,t^\alpha.
\end{align*}
This proves \eqref{eq:h-small-t-prop}.

\medskip
\noindent
\textit{Step 2: $\alpha$-H\"older continuity in time.}
Fix $0\le s<t\le T$. Subtracting the two Volterra formulas gives
\begin{align*}
  h(t,z)-h(s,z)
  &=
  \frac{1}{\Gamma(\alpha)}
  \int_0^s
  \left((t-u)^{\alpha-1}-(s-u)^{\alpha-1}\right)F_z(h(u,z))\,du\\
  &\quad
  +
  \frac{1}{\Gamma(\alpha)}
  \int_s^t (t-u)^{\alpha-1}F_z(h(u,z))\,du.
\end{align*}
Taking absolute values and using \eqref{eq:F-uniform-bound-proof-reg},
\begin{align*}
  |h(t,z)-h(s,z)|
  &\le
  \frac{C_{\eta,R,T}}{\Gamma(\alpha)}
  \int_0^s
  \left|(t-u)^{\alpha-1}-(s-u)^{\alpha-1}\right|du
  \\
  &\qquad\qquad\qquad
  +
  \frac{C_{\eta,R,T}}{\Gamma(\alpha)}
  \int_s^t (t-u)^{\alpha-1}du.
\end{align*}
Since $\alpha-1\in(-1,0)$, the map $x\mapsto x^{\alpha-1}$ is decreasing on
$(0,\infty)$, and therefore
\begin{align*}
  \int_0^s\left((s-u)^{\alpha-1}-(t-u)^{\alpha-1}\right)\,du
  &=
  \frac{1}{\alpha}\left(s^\alpha-t^\alpha+(t-s)^\alpha\right)
  \le \frac{1}{\alpha}(t-s)^\alpha.
\end{align*}
Also,
$\int_s^t (t-u)^{\alpha-1}\,du=\frac1\alpha (t-s)^\alpha$.
Hence,
\[
  |h(t,z)-h(s,z)|
  \le C_{\eta,R,T}|t-s|^\alpha,
\]
uniformly in $z\in\Gamma_{\eta,R}$. This proves \eqref{eq:h-holder-t-prop}.

\medskip
\noindent
\textit{Step 3: continuity and Lipschitz dependence on $z$.}
Fix $z_1,z_2\in\Gamma_{\eta,R}$ and write
\[
  \delta h(t):=h(t,z_1)-h(t,z_2).
\]
Subtracting the two Volterra equations gives
\begin{align*}
  \delta h(t)
  &=
  \frac{1}{\Gamma(\alpha)}
  \int_0^t (t-s)^{\alpha-1}
  \left(F_{z_1}(h(s,z_1))-F_{z_2}(h(s,z_2))\right)\,ds.
\end{align*}
Split the integrand as
\begin{align*}
  F_{z_1}(h(s,z_1))-F_{z_2}(h(s,z_2))
  &=
  \left(F_{z_1}(h(s,z_1))-F_{z_1}(h(s,z_2))\right)
  \\
  &\quad+
  \left(F_{z_1}(h(s,z_2))-F_{z_2}(h(s,z_2))\right).
\end{align*}
By \eqref{eq:F-lipschitz-proof-reg},
\[
  \left|F_{z_1}(h(s,z_1))-F_{z_1}(h(s,z_2))\right|
  \le C_{\eta,R,T}|\delta h(s)|.
\]
Since $F_z(x)$ is polynomial in $(z,x)$ and $z\in\Gamma_{\eta,R}$, $|x|\le
M_{\eta,R,T}$, there exists $C_{\eta,R,T}>0$ such that
\[
  \left|F_{z_1}(h(s,z_2))-F_{z_2}(h(s,z_2))\right|
  \le C_{\eta,R,T}|z_1-z_2|.
\]
Therefore,
\[
  |\delta h(t)|
  \le
  C_{\eta,R,T}|z_1-z_2|t^\alpha
  +
  C_{\eta,R,T}
  \int_0^t (t-s)^{\alpha-1}|\delta h(s)|\,ds.
\]
Set
\[
u(t):=\sup_{0\le s\le t}|h(s,z_1)-h(s,z_2)|.
\]
Then the previous estimate implies
\[
u(t)\le C_{\eta,R,T}|z_1-z_2|
   +C_{\eta,R,T}\int_0^t (t-s)^{\alpha-1}u(s)\,ds,
   \qquad 0\le t\le T.
\]
By the weakly singular Gr\"onwall inequality, it follows that
\[
u(t)\le C_{\eta,R,T}|z_1-z_2|\,E_\alpha(C_{\eta,R,T}t^\alpha),
\qquad 0\le t\le T.
\]
Since $t\in[0,T]$, we obtain
\[
\sup_{0\le t\le T}|h(t,z_1)-h(t,z_2)|
\le C_{\eta,R,T}|z_1-z_2|.
\]
In particular, $z\mapsto h(\cdot,z)$ is locally Lipschitz into $C([0,T])$.
Combining this with \eqref{eq:h-holder-t-prop} gives joint continuity of
$(t,z)\mapsto h(t,z)$ on $[0,T]\times\Gamma_{\eta,R}$.

\medskip
\noindent
\textit{Step 4: absolute continuity and derivative bound.}
Define
\[
  g_z(t):=F_z(h(t,z)),\qquad 0\le t\le T.
\]
By \eqref{eq:h-holder-t-prop} and the local Lipschitz property
\eqref{eq:F-lipschitz-proof-reg},
\[
  |g_z(t)-g_z(s)|\le C_{\eta,R,T}|t-s|^\alpha,
  \qquad 0\le s,t\le T.
\]
Thus $g_z\in C^\alpha([0,T])$ uniformly in $z$, and
$h(\cdot,z)=I^\alpha g_z$.
We now prove that $h(\cdot,z)$ is differentiable on $(0,T]$.
Fix $z\in\Gamma_{\eta,R}$ and $t\in(0,T]$. Recall that
\[
  h(t,z)=\frac{1}{\Gamma(\alpha)}\int_0^t (t-s)^{\alpha-1}g_z(s)\,ds,
\]
where
\[
  |g_z(s)-g_z(r)|\le C_{\eta,R,T}|s-r|^\alpha,
  \qquad 0\le r,s\le T.
\]

Let $h_0$ be such that $|h_0|<\min\{t,T-t\}$, and let
$u\in(-h_0,h_0)\setminus\{0\}$.

\smallskip
\noindent
\emph{Case 1: $u>0$.}
In this case,
\begin{align*}
  \frac{h(t+u,z)-h(t,z)}{u}
  &=
  \frac{1}{\Gamma(\alpha)}
  \int_0^t
  \frac{(t+u-s)^{\alpha-1}-(t-s)^{\alpha-1}}{u}\,g_z(s)\,ds
  \\
  &\quad+
  \frac{1}{\Gamma(\alpha)}
  \frac{1}{u}\int_t^{t+u}(t+u-s)^{\alpha-1}g_z(s)\,ds.
\end{align*}
Adding and subtracting $g_z(t)$, we rewrite this as
\begin{align}
  &\frac{h(t+u,z)-h(t,z)}{u}\nonumber\\
  &=
  \frac{1}{\Gamma(\alpha)}
  \int_0^t
  \frac{(t+u-s)^{\alpha-1}-(t-s)^{\alpha-1}}{u}
  \left(g_z(s)-g_z(t)\right)\,ds
  \nonumber\\
  &\qquad+
  \frac{1}{\Gamma(\alpha)}
  \frac{1}{u}\int_t^{t+u}(t+u-s)^{\alpha-1}
  \left(g_z(s)-g_z(t)\right)\,ds
  \nonumber\\
  &\qquad\qquad+
  \frac{g_z(t)}{\Gamma(\alpha)}
  \left[
    \int_0^t
    \frac{(t+u-s)^{\alpha-1}-(t-s)^{\alpha-1}}{u}\,ds
    +
    \frac{1}{u}\int_t^{t+u}(t+u-s)^{\alpha-1}\,ds
  \right].
  \label{eq:dq-split-proof-pos}
\end{align}

For the bracketed term, a direct computation gives
\begin{align*}
  &\int_0^t
    \frac{(t+u-s)^{\alpha-1}-(t-s)^{\alpha-1}}{u}\,ds
    +
    \frac{1}{u}\int_t^{t+u}(t+u-s)^{\alpha-1}\,ds
  \\
  &=
  \frac{1}{u}\int_0^{t+u}r^{\alpha-1}\,dr
  -
  \frac{1}{u}\int_0^t r^{\alpha-1}\,dr
  =
  \frac{(t+u)^\alpha-t^\alpha}{\alpha u},
\end{align*}
and therefore
\begin{equation}\label{eq:bracket-limit-pos}
  \frac{g_z(t)}{\Gamma(\alpha)}
  \left[
    \int_0^t
    \frac{(t+u-s)^{\alpha-1}-(t-s)^{\alpha-1}}{u}\,ds
    +
    \frac{1}{u}\int_t^{t+u}(t+u-s)^{\alpha-1}\,ds
  \right]
  \to
  \frac{g_z(t)}{\Gamma(\alpha)}\,t^{\alpha-1}
\end{equation}
as $u\downarrow0$.

For the first integral in \eqref{eq:dq-split-proof-pos}, fix $0<u<t/2$. By the mean value theorem,
for each $s\in[0,t)$ there exists $\theta=\theta(s,u)\in(0,1)$ such that
\[
  \frac{(t+u-s)^{\alpha-1}-(t-s)^{\alpha-1}}{u}
  =
  (\alpha-1)(t-s+\theta u)^{\alpha-2}.
\]
Hence
\[
  \left|
    \frac{(t+u-s)^{\alpha-1}-(t-s)^{\alpha-1}}{u}
  \right|
  \le C_\alpha (t-s)^{\alpha-2},
\]
and thus, using the $\alpha$-H\"older continuity of $g_z$,
\[
  \left|
    \frac{(t+u-s)^{\alpha-1}-(t-s)^{\alpha-1}}{u}
    \left(g_z(s)-g_z(t)\right)
  \right|
  \le C_{\eta,R,T}(t-s)^{2\alpha-2}.
\]
Since $\alpha>1/2$, we have $2\alpha-2>-1$, so $(t-s)^{2\alpha-2}\in L^1(0,t)$.
Moreover, for each fixed $s\in[0,t)$,
\[
  \frac{(t+u-s)^{\alpha-1}-(t-s)^{\alpha-1}}{u}
  \to
  (\alpha-1)(t-s)^{\alpha-2}
\]
as $u\downarrow0$. Hence, by the dominated convergence theorem,
\begin{align}
  &\int_0^t
  \frac{(t+u-s)^{\alpha-1}-(t-s)^{\alpha-1}}{u}
  \left(g_z(s)-g_z(t)\right)\,ds
  \nonumber\\
  &\to
  (\alpha-1)\int_0^t (t-s)^{\alpha-2}\left(g_z(s)-g_z(t)\right)\,ds
  \qquad\text{as }u\downarrow0.
  \label{eq:first-integral-limit-pos}
\end{align}

For the second integral in \eqref{eq:dq-split-proof-pos}, again by the $\alpha$-H\"older continuity of $g_z$,
\begin{align}
  &\left|
    \frac{1}{u}\int_t^{t+u}(t+u-s)^{\alpha-1}
    \left(g_z(s)-g_z(t)\right)\,ds
  \right|
  \nonumber\\
  &\le
  \frac{C_{\eta,R,T}}{u}\int_t^{t+u}(t+u-s)^{\alpha-1}(s-t)^\alpha\,ds
  \nonumber\\
 &=
  \frac{C_{\eta,R,T}}{u}\int_0^u r^{\alpha-1}(u-r)^\alpha\,dr
 =
  C_{\eta,R,T}u^{2\alpha-1}\int_0^1 x^{\alpha-1}(1-x)^\alpha\,dx
  \to0
\label{eq:second-integral-limit-pos}
\end{align}
as $u\downarrow0$.

Combining the three limits above in \eqref{eq:bracket-limit-pos}, \eqref{eq:first-integral-limit-pos}, and \eqref{eq:second-integral-limit-pos},
we conclude that
\begin{equation}
  \lim_{u\downarrow0}\frac{h(t+u,z)-h(t,z)}{u}
  =
  \frac{g_z(t)}{\Gamma(\alpha)}t^{\alpha-1}
  +
  \frac{\alpha-1}{\Gamma(\alpha)}
  \int_0^t (t-s)^{\alpha-2}\left(g_z(s)-g_z(t)\right)\,ds .
  \label{eq:right-derivative-proof}
\end{equation}

\smallskip
\noindent
\emph{Case 2: $u<0$.}
In this case,
\begin{align*}
  \frac{h(t+u,z)-h(t,z)}{u}
  &=
  \frac{1}{\Gamma(\alpha)}
  \int_0^{t+u}
  \frac{(t+u-s)^{\alpha-1}-(t-s)^{\alpha-1}}{u}\,g_z(s)\,ds
  \\
  &\quad
  -
  \frac{1}{\Gamma(\alpha)}
  \frac{1}{u}\int_{t+u}^{t}(t-s)^{\alpha-1}g_z(s)\,ds.
\end{align*}
Adding and subtracting $g_z(t)$, we obtain
\begin{align}
  &\frac{h(t+u,z)-h(t,z)}{u}\nonumber\\
  &=
  \frac{1}{\Gamma(\alpha)}
  \int_0^{t+u}
  \frac{(t+u-s)^{\alpha-1}-(t-s)^{\alpha-1}}{u}
  \left(g_z(s)-g_z(t)\right)\,ds
  \nonumber\\
  &\qquad
  -
  \frac{1}{\Gamma(\alpha)}
  \frac{1}{u}\int_{t+u}^{t}(t-s)^{\alpha-1}
  \left(g_z(s)-g_z(t)\right)\,ds
  \nonumber\\
  &\qquad\qquad+
  \frac{g_z(t)}{\Gamma(\alpha)}
  \left[
    \int_0^{t+u}
    \frac{(t+u-s)^{\alpha-1}-(t-s)^{\alpha-1}}{u}\,ds
    -
    \frac{1}{u}\int_{t+u}^{t}(t-s)^{\alpha-1}\,ds
  \right].
  \label{eq:dq-split-proof-neg}
\end{align}

For the bracketed term, another direct computation gives
\begin{align*}
  &\int_0^{t+u}
  \frac{(t+u-s)^{\alpha-1}-(t-s)^{\alpha-1}}{u}\,ds
  -
  \frac{1}{u}\int_{t+u}^{t}(t-s)^{\alpha-1}\,ds
  \\
  &=
  \frac{1}{u}\int_0^{t+u}r^{\alpha-1}\,dr
  -
  \frac{1}{u}\int_0^t r^{\alpha-1}\,dr
  =
  \frac{(t+u)^\alpha-t^\alpha}{\alpha u},
\end{align*}
so again
\begin{equation}\label{eq:bracket-limit-neg}
  \frac{g_z(t)}{\Gamma(\alpha)}
  \left[
    \int_0^{t+u}
    \frac{(t+u-s)^{\alpha-1}-(t-s)^{\alpha-1}}{u}\,ds
    -
    \frac{1}{u}\int_{t+u}^{t}(t-s)^{\alpha-1}\,ds
  \right]
  \to
  \frac{g_z(t)}{\Gamma(\alpha)}\,t^{\alpha-1}
\end{equation}
as $u\uparrow0$.

For the first integral in \eqref{eq:dq-split-proof-neg}, the same dominated convergence argument as above applies, because for fixed $s\in[0,t)$,
\[
  \frac{(t+u-s)^{\alpha-1}-(t-s)^{\alpha-1}}{u}
  \to
  (\alpha-1)(t-s)^{\alpha-2},
\]
as $u\uparrow0$, and the same integrable domination by
$C_{\eta,R,T}(t-s)^{2\alpha-2}$ holds. Hence,
\begin{align}
  &\int_0^{t+u}
  \frac{(t+u-s)^{\alpha-1}-(t-s)^{\alpha-1}}{u}
  \left(g_z(s)-g_z(t)\right)\,ds
  \nonumber\\
  &\to
  (\alpha-1)\int_0^t (t-s)^{\alpha-2}\left(g_z(s)-g_z(t)\right)\,ds,
  \qquad\text{as }u\uparrow0.
  \label{eq:first-integral-limit-neg}
\end{align}

For the second integral in \eqref{eq:dq-split-proof-neg}, write $v:=-u>0$. Then
\begin{align}
  &\left|
    -\frac{1}{u}\int_{t+u}^{t}(t-s)^{\alpha-1}
    \left(g_z(s)-g_z(t)\right)\,ds
  \right|
  \nonumber\\
  &=
  \frac{1}{v}\left|
    \int_{t-v}^{t}(t-s)^{\alpha-1}
    \left(g_z(s)-g_z(t)\right)\,ds
  \right|
  \nonumber\\
  &\le
  \frac{C_{\eta,R,T}}{v}
  \int_{t-v}^{t}(t-s)^{\alpha-1}(t-s)^\alpha\,ds
 =
  C_{\eta,R,T}v^{2\alpha-1}
  \to0,
\label{eq:second-integral-limit-neg}
\end{align}
as $u\uparrow0$.

Combining the limits in \eqref{eq:bracket-limit-neg}, \eqref{eq:first-integral-limit-neg}, and \eqref{eq:second-integral-limit-neg},
we obtain
\begin{equation}
  \lim_{u\uparrow0}\frac{h(t+u,z)-h(t,z)}{u}
  =
  \frac{g_z(t)}{\Gamma(\alpha)}t^{\alpha-1}
  +
  \frac{\alpha-1}{\Gamma(\alpha)}
  \int_0^t (t-s)^{\alpha-2}\left(g_z(s)-g_z(t)\right)\,ds .
  \label{eq:left-derivative-proof}
\end{equation}

From \eqref{eq:right-derivative-proof} and \eqref{eq:left-derivative-proof}, the right and left derivatives agree. Therefore $h(\cdot,z)$ is differentiable on $(0,T]$, and
\begin{equation}\label{eq:h-derivative-formula-proof}
  \partial_t h(t,z)
  =
  \frac{g_z(t)}{\Gamma(\alpha)}t^{\alpha-1}
  +
  \frac{\alpha-1}{\Gamma(\alpha)}
  \int_0^t (t-s)^{\alpha-2}\left(g_z(s)-g_z(t)\right)\,ds .
\end{equation}

It remains to estimate the derivative. Since $|g_z(t)|\le C_{\eta,R,T}$,
\[
  \left|\frac{g_z(t)}{\Gamma(\alpha)}t^{\alpha-1}\right|
  \le C_{\eta,R,T}t^{\alpha-1}.
\]
For the integral term, using once more the $\alpha$-H\"older continuity of $g_z$,
\begin{align*}
  \left|
    \int_0^t (t-s)^{\alpha-2}\left(g_z(s)-g_z(t)\right)\,ds
  \right|
  \le
  C_{\eta,R,T}\int_0^t (t-s)^{2\alpha-2}\,ds
 =
  C_{\eta,R,T}\frac{t^{2\alpha-1}}{2\alpha-1}.
\end{align*}
Since $t\le T$, we have $t^{2\alpha-1}=t^{\alpha-1}t^\alpha\le T^\alpha t^{\alpha-1}$.
Therefore
\[
  |\partial_t h(t,z)|
  \le C_{\eta,R,T}t^{\alpha-1},
  \qquad 0<t\le T.
\]

Finally, since \eqref{eq:h-derivative-formula-proof} holds for every $t\in(0,T]$
and $|\partial_t h(t,z)|\le C_{\eta,R,T}\,t^{\alpha-1}$ with $t^{\alpha-1}\in L^1(0,T)$ (as $\alpha>0$), we have $\partial_t h(\cdot,z)\in L^1(0,T)$. Hence the representation
\[
  h(t,z)=h(0,z)+\int_0^t \partial_s h(s,z)\,ds,\qquad 0\le t\le T,
\]
which follows by integrating \eqref{eq:h-derivative-formula-proof} from $0$ to $t$ and using the dominated convergence theorem as the lower limit approaches~$0$, implies that $t\mapsto h(t,z)$ is absolutely continuous on $[0,T]$.
\end{proof}


\subsection{Lemma~\ref{lem:h-time-regularity}}\label{app:proof-h-time-regularity}

\begin{lemma}[Regularity of the Riccati nonlinearity]
\label{lem:h-time-regularity}
Assume the hypotheses of Proposition~\ref{prop:riccati-regularity}. Fix $\eta\in\R$ and $R>0$,
and define
\[
  g_z(t)
  :=
  F_z(h(t,z))
  =
  \frac12(z^2-z)+\gamma\rho\nu z\,h(t,z)+\frac{(\gamma\nu)^2}{2}(h(t,z))^2.
\]
Then there exists a constant $C_{\eta,R,T}>0$ such that, uniformly for
$z\in\Gamma_{\eta,R}$,
\[
  \sup_{0\le t\le T}|g_z(t)|\le C_{\eta,R,T},
\]
$g_z$ is absolutely continuous on $[0,T]$, and
\begin{align}
  |g_z(t)-g_z(0)| &\le C_{\eta,R,T}t^\alpha,
  &&0\le t\le T,\label{eq:g-small-t-new}\\
  |g_z(t)-g_z(s)| &\le C_{\eta,R,T}|t-s|^\alpha,
  &&0\le s,t\le T,\label{eq:g-holder-t-new}\\
  |\partial_t g_z(t)| &\le C_{\eta,R,T}t^{\alpha-1},
  &&0<t\le T.\label{eq:g-derivative-t-new}
\end{align}
\end{lemma}

\begin{proof}
The bounds on $h$ stated in Proposition~\ref{prop:riccati-regularity} imply that
\[
  \sup_{0\le t\le T}\sup_{z\in\Gamma_{\eta,R}}|h(t,z)|
  \le C_{\eta,R,T},
\]
and therefore the polynomial $F_z(x)$ and its derivative in $x$ are uniformly
bounded on the set
\[
  \left\{(z,x):\ z\in\Gamma_{\eta,R},\ |x|\le C_{\eta,R,T}\right\}.
\]
Hence
\[
  \sup_{0\le t\le T}\sup_{z\in\Gamma_{\eta,R}}|g_z(t)|
  \le C_{\eta,R,T}.
\]

The estimate \eqref{eq:g-small-t-new} follows from
\[
  g_z(t)-g_z(0)
  =
  \gamma\rho\nu z\,h(t,z)+\frac{(\gamma\nu)^2}{2}\left(h(t,z)\right)^2
\]
and Proposition~\ref{prop:riccati-regularity}, which gives
$|h(t,z)|\le C_{\eta,R,T}t^\alpha$ uniformly on $\Gamma_{\eta,R}$.

Likewise, a direct application of the mean value theorem to the quadratic
polynomial $F_z$ together with the H\"older estimate for $h$ from
Proposition~\ref{prop:riccati-regularity} yields \eqref{eq:g-holder-t-new}.

Because $h(\cdot,z)$ is absolutely continuous on $[0,T]$ by
Proposition~\ref{prop:riccati-regularity}, the chain rule gives
\[
  \partial_t g_z(t)
  =
  \left(\gamma\rho\nu z+(\gamma\nu)^2 h(t,z)\right)\partial_t h(t,z),
  \qquad 0<t\le T.
\]
Using again the uniform bound on $h$ and the derivative estimate
\eqref{eq:h-derivative-t-prop} from
Proposition~\ref{prop:riccati-regularity}, we obtain
\eqref{eq:g-derivative-t-new}. Since
$|\partial_t g_z(t)|\le C_{\eta,R,T}t^{\alpha-1}$ and
$t^{\alpha-1}\in L^1(0,T)$, each $g_z$ is absolutely continuous on $[0,T]$.
\end{proof}


\subsection{Lemma~\ref{lem:quadrature-c1}}\label{app:proof-quadrature-c1}

\begin{lemma}[Weakly singular quadrature constant]
\label{lem:quadrature-c1}
For $\alpha\in(0,1)$,
\begin{equation}\label{eq:quadrature-c1}
\begin{aligned}
  &\int_0^1\left(1-\frac{v^{-\alpha}}{\Gamma(1-\alpha)}\right)\,dv
  +
  \frac1{\Gamma(1-\alpha)}
  \sum_{j=1}^\infty\int_j^{j+1}(j^{-\alpha}-v^{-\alpha})\,dv\\
  &=
  1+\frac{\zeta(\alpha)}{\Gamma(1-\alpha)}
  =
  c_1(\alpha).
\end{aligned}
\end{equation}
Moreover,
\begin{equation}\label{eq:quadrature-c1-tail}
  \frac1{\Gamma(1-\alpha)}
  \sum_{j=n}^\infty\int_j^{j+1}(j^{-\alpha}-v^{-\alpha})\,dv
  \le C_\alpha n^{-\alpha},
  \qquad n\ge1.
\end{equation}
\end{lemma}

\begin{proof}
For $j\ge1$,
\[
  \int_j^{j+1}(j^{-\alpha}-v^{-\alpha})\,dv
  =
  j^{-\alpha}
  -
  \frac{(j+1)^{1-\alpha}-j^{1-\alpha}}{1-\alpha}.
\]
Therefore,
\begin{align*}
  \sum_{j=1}^{m}\int_j^{j+1}(j^{-\alpha}-v^{-\alpha})\,dv
  =
  \sum_{j=1}^{m}j^{-\alpha}
  -
  \frac{(m+1)^{1-\alpha}-1}{1-\alpha}.
\end{align*}
Hence,
\begin{align*}
  &\int_0^1\left(1-\frac{v^{-\alpha}}{\Gamma(1-\alpha)}\right)\,dv
  +
  \frac1{\Gamma(1-\alpha)}
  \sum_{j=1}^{m}\int_j^{j+1}(j^{-\alpha}-v^{-\alpha})\,dv
  \\
  &=
  1-\frac{1}{\Gamma(1-\alpha)(1-\alpha)}
  +
  \frac1{\Gamma(1-\alpha)}
  \left(
    \sum_{j=1}^{m}j^{-\alpha}
    -
    \frac{(m+1)^{1-\alpha}-1}{1-\alpha}
  \right)
  \\
  &=
  1+\frac1{\Gamma(1-\alpha)}
  \left(
    \sum_{j=1}^{m}j^{-\alpha}
    -
    \frac{(m+1)^{1-\alpha}}{1-\alpha}
  \right).
\end{align*}
Since
\[
  \sum_{j=1}^{m}j^{-\alpha}
  -\frac{(m+1)^{1-\alpha}}{1-\alpha}
  \to \zeta(\alpha)
  \qquad (m\to\infty),
\]
we obtain \eqref{eq:quadrature-c1}. Moreover, for every $n\ge1$,
\begin{align*}
  0
  \le
  \int_j^{j+1}(j^{-\alpha}-v^{-\alpha})\,dv
  &=
  \int_j^{j+1}\left(j^{-\alpha}-v^{-\alpha}\right)\,dv
  \\
  &\le
  C_\alpha\int_j^{j+1}j^{-\alpha-1}(v-j)\,dv
\le
  C_\alpha j^{-\alpha-1},
\end{align*}
where we used the mean value theorem and the monotonicity of
$x\mapsto x^{-\alpha}$. Summing over $j\ge n$ yields
\[
  \sum_{j=n}^{\infty}\int_j^{j+1}(j^{-\alpha}-v^{-\alpha})\,dv
  \le
  C_\alpha\sum_{j=n}^{\infty}j^{-\alpha-1}
  \le
  C_\alpha n^{-\alpha},
\]
which proves \eqref{eq:quadrature-c1-tail}.
\end{proof}


\subsection{Lemma~\ref{lem:quadrature-defect-Gprime}}\label{app:proof-quadrature-defect-Gprime}

\begin{lemma}[Weakly singular quadrature defect for $\mathcal G'$]
\label{lem:quadrature-defect-Gprime}
Let
\[
  \mathcal G(t)=\frac1\gamma\left(1-E_\alpha(-\gamma t^\alpha)\right),
  \qquad t\ge0,
\]
and, for $1\le n\le N_\tau$, define
\begin{equation}\label{eq:Dn-def}
\begin{aligned}
  \mathcal D_n^\tau
  &:=
  \int_0^1\left(1-\frac{v^{-\alpha}}{\Gamma(1-\alpha)}\right)
  \frac1\tau \mathcal G'(t_n-v/\tau)\,dv
\\
  &\qquad
  +
  \frac1{\Gamma(1-\alpha)}
  \sum_{j=1}^{n-1}\int_j^{j+1}(j^{-\alpha}-v^{-\alpha})
  \frac1\tau \mathcal G'(t_n-v/\tau)\,dv .
\end{aligned}
\end{equation}
Then
\begin{equation}\label{eq:quadrature-defect-main}
  \mathcal D_n^\tau
  =
  c_1(\alpha)\frac1\tau \mathcal G'(t_n)+R_n^\tau,
\end{equation}
where
\begin{equation}\label{eq:Rn-bound}
  |R_n^\tau|\le C_T\tau^{-\alpha}n^{-\alpha},
  \qquad 1\le n\le N_\tau.
\end{equation}
\end{lemma}

\begin{proof}
Write
\[
  w_n(v):=\frac1\tau \mathcal G'(t_n-v/\tau),
  \qquad 0\le v\le n.
\]
Then \eqref{eq:Dn-def} becomes
\begin{align*}
  \mathcal D_n^\tau
  =
  \int_0^1\left(1-\frac{v^{-\alpha}}{\Gamma(1-\alpha)}\right)w_n(v)\,dv
  +
  \frac1{\Gamma(1-\alpha)}
  \sum_{j=1}^{n-1}\int_j^{j+1}(j^{-\alpha}-v^{-\alpha})w_n(v)\,dv.
\end{align*}
We split
\[
  w_n(v)=w_n(0)+\left(w_n(v)-w_n(0)\right).
\]

The contribution of the constant part $w_n(0)$ equals
\begin{align*}
  w_n(0)
  &\left[
    \int_0^1\left(1-\frac{v^{-\alpha}}{\Gamma(1-\alpha)}\right)\,dv
    +
    \frac1{\Gamma(1-\alpha)}
    \sum_{j=1}^{n-1}\int_j^{j+1}(j^{-\alpha}-v^{-\alpha})\,dv
  \right].
\end{align*}
By Lemma~\ref{lem:quadrature-c1},
\[
  \int_0^1\left(1-\frac{v^{-\alpha}}{\Gamma(1-\alpha)}\right)\,dv
  +
  \frac1{\Gamma(1-\alpha)}
  \sum_{j=1}^{n-1}\int_j^{j+1}(j^{-\alpha}-v^{-\alpha})\,dv
  =
  c_1(\alpha)+\mathcal O(n^{-\alpha}),
\]
and hence the constant part contributes
\[
  c_1(\alpha)\,w_n(0)+\mathcal O\!\left(n^{-\alpha}|w_n(0)|\right).
\]

Since
\[
  \mathcal G'(t)=\frac1\gamma f_{\alpha,\gamma}(t),
\]
and
\[
  |f_{\alpha,\gamma}(t)|\le C_T t^{\alpha-1},
  \qquad 0<t\le T,
\]
we obtain
\[
  |w_n(0)|
  =
  \frac1\tau |\mathcal G'(t_n)|
  \le
  C_T\tau^{-1}t_n^{\alpha-1}
  =
  C_T\tau^{-\alpha}n^{\alpha-1}.
\]
Therefore,
\[
  n^{-\alpha}|w_n(0)|
  \le
  C_T\tau^{-\alpha}n^{-1}
  \le
  C_T\tau^{-\alpha}n^{-\alpha},
\]
since $n\ge1$ and $\alpha\in(0,1)$.

It remains to estimate the oscillatory part
\begin{align*}
  \widetilde R_n^\tau
  &:=
  \int_0^1\left(1-\frac{v^{-\alpha}}{\Gamma(1-\alpha)}\right)
  \left(w_n(v)-w_n(0)\right)\,dv
  \\
  &\quad
  +
  \frac1{\Gamma(1-\alpha)}
  \sum_{j=1}^{n-1}\int_j^{j+1}(j^{-\alpha}-v^{-\alpha})
  \left(w_n(v)-w_n(0)\right)\,dv .
\end{align*}

We first record a derivative bound for $\mathcal G'$. Since
\[
  \mathcal G'(t)=\frac1\gamma f_{\alpha,\gamma}(t)
  =t^{\alpha-1}E_{\alpha,\alpha}(-\gamma t^\alpha),
\]
we may write
\[
  \mathcal G'(t)
  =
  \sum_{m=0}^\infty
  \frac{(-\gamma)^m}{\Gamma(\alpha m+\alpha)}\,t^{\alpha m+\alpha-1},
  \qquad t>0.
\]
For every $\delta>0$, this series and its termwise derivative converge uniformly
on $[\delta,T]$, so differentiation term by term is justified there and yields
\[
  (\mathcal G')'(t)
  =
  \sum_{m=0}^\infty
  \frac{(-\gamma)^m(\alpha m+\alpha-1)}{\Gamma(\alpha m+\alpha)}
  t^{\alpha m+\alpha-2}.
\]
The $m=0$ term is of order $t^{\alpha-2}$, while the sum over $m\ge1$ is bounded
by $C_T t^{2\alpha-2}$. Since $2\alpha-2>\alpha-2$ and $t\le T$, we obtain
\[
  |(\mathcal G')'(t)|
  \le C_T t^{\alpha-2},
  \qquad 0<t\le T.
\]

If $n=1$, the discrete sum is absent. Using the crude bound
\[
  |w_1(v)-w_1(0)|
  \le |w_1(v)|+|w_1(0)|
  \le C_T\tau^{-\alpha}\left((1-v)^{\alpha-1}+1\right),
  \qquad 0\le v\le1,
\]
we immediately get
\[
  |\widetilde R_1^\tau|
  \le C_T\tau^{-\alpha}
  = C_T\tau^{-\alpha}1^{-\alpha}.
\]
Hence it remains to consider $n\ge2$.

For the integral over $[0,1]$, the mean value theorem and the derivative bound above give
\[
  |w_n(v)-w_n(0)|
  \le
  C_T\tau^{-2}\,v\,(t_n-v/\tau)^{\alpha-2},
  \qquad 0\le v\le1.
\]
Since $t_n-v/\tau\ge (n-1)/\tau\asymp n/\tau$ for $n\ge2$, it follows that
\[
  |w_n(v)-w_n(0)|
  \le
  C_T\tau^{-\alpha}n^{\alpha-2}v.
\]
Because
\[
  1-\frac{v^{-\alpha}}{\Gamma(1-\alpha)}\in L^1(0,1),
\]
we obtain
\[
  \left|
    \int_0^1\left(1-\frac{v^{-\alpha}}{\Gamma(1-\alpha)}\right)
    \left(w_n(v)-w_n(0)\right)\,dv
  \right|
  \le
  C_T\tau^{-\alpha}n^{\alpha-2}
  \le
  C_T\tau^{-\alpha}n^{-\alpha},
\]
because $\alpha-2\le -\alpha$ for $\alpha\in(0,1)$.

For the discrete part, write
\[
  I_{j,n}
  :=
  \int_j^{j+1}(j^{-\alpha}-v^{-\alpha})\left(w_n(v)-w_n(0)\right)\,dv,
  \qquad 1\le j\le n-1.
\]
We split the sum into the regions $1\le j\le \lfloor n/2\rfloor$ and
$\lfloor n/2\rfloor+1\le j\le n-1$.

If $1\le j\le \lfloor n/2\rfloor$, then for $v\in[j,j+1]$ we have
$t_n-v/\tau\asymp n/\tau$ and $v\asymp j$. Hence
\[
  |w_n(v)-w_n(0)|
  \le
  C_T\tau^{-2}\,v\,(t_n-v/\tau)^{\alpha-2}
  \le
  C_T\tau^{-\alpha}j\,n^{\alpha-2}.
\]
Using
\[
  \int_j^{j+1}(j^{-\alpha}-v^{-\alpha})\,dv
  \le C_\alpha j^{-\alpha-1},
\]
we conclude that
\[
  |I_{j,n}|
  \le
  C_T\tau^{-\alpha}n^{\alpha-2}j^{-\alpha}.
\]
Therefore,
\[
  \sum_{j=1}^{\lfloor n/2\rfloor}|I_{j,n}|
  \le
  C_T\tau^{-\alpha}n^{\alpha-2}
  \sum_{j=1}^{\lfloor n/2\rfloor}j^{-\alpha}
  \le
  C_T\tau^{-\alpha}n^{-1}
  \le
  C_T\tau^{-\alpha}n^{-\alpha}.
\]

If $\lfloor n/2\rfloor+1\le j\le n-1$, then for $v\in[j,j+1]$ we use the crude bound
\[
  |w_n(v)-w_n(0)|
  \le
  |w_n(v)|+|w_n(0)|
  \le
  C_T\tau^{-\alpha}\left((n-v)^{\alpha-1}+n^{\alpha-1}\right).
\]
Moreover, $j\asymp n$ on this range, so
\[
  \int_j^{j+1}(j^{-\alpha}-v^{-\alpha})\,dv
  \le C_\alpha n^{-\alpha-1}.
\]
Hence
\begin{align*}
  \sum_{j=\lfloor n/2\rfloor+1}^{n-1}|I_{j,n}|
  &\le
  C_T\tau^{-\alpha}n^{-\alpha-1}
  \sum_{j=\lfloor n/2\rfloor+1}^{n-1}
  \int_j^{j+1}\left((n-v)^{\alpha-1}+n^{\alpha-1}\right)\,dv
  \\
  &\le
  C_T\tau^{-\alpha}n^{-\alpha-1}
  \left(
    \int_{n/2}^{n}(n-v)^{\alpha-1}\,dv
    +n^{\alpha-1}\sum_{j=\lfloor n/2\rfloor+1}^{n-1}1
  \right)
  \\
  &\le
  C_T\tau^{-\alpha}n^{-1}
  \le
  C_T\tau^{-\alpha}n^{-\alpha}.
\end{align*}

Combining the bounds for the integral part and the two discrete ranges, we obtain
\[
  |\widetilde R_n^\tau|
  \le
  C_T\tau^{-\alpha}n^{-\alpha},
  \qquad 1\le n\le N_\tau.
\]

Finally, combining the constant contribution and the oscillatory contribution,
we get
\[
  \mathcal D_n^\tau
  =
  c_1(\alpha)\,w_n(0)+R_n^\tau
  =
  c_1(\alpha)\frac1\tau \mathcal G'(t_n)+R_n^\tau,
\]
where
\[
  |R_n^\tau|
  \le
  C_T\tau^{-\alpha}n^{-\alpha}.
\]
This proves \eqref{eq:quadrature-defect-main}--\eqref{eq:Rn-bound}.
\end{proof}


\subsection{Proof of Proposition~\ref{prop:cum-resolvent-uniform-comparison}}\label{app:proof-cum-resolvent-uniform-comparison}
We split the proof into five steps.

\medskip
\noindent
\textit{Step 1: error equation.}
By Lemma~\ref{lem:cum-resolvent-renewal}, the discrete cumulative resolvent
satisfies, for every $n\ge1$,
\begin{equation}\label{eq:Gtau-renewal-proof}
  \mathcal G_n^\tau
  =
  a_\tau\sum_{k=1}^n\varphi_k\,\mathcal G_{n-k}^\tau
  +\tau^{-\alpha}\Phi_n,
  \qquad
  \Phi_n:=\sum_{k=1}^n\varphi_k.
\end{equation}
On the continuous side, since
$\widehat{\mathcal G}(z)=\frac{1}{z(\gamma+z^\alpha)}$,
we have
$z^\alpha\widehat{\mathcal G}(z)=\frac1z-\gamma\widehat{\mathcal G}(z)$,
and hence $\mathcal G$ solves
\begin{equation}\label{eq:G-cont-fde-proof}
  D^\alpha\mathcal G(t)=1-\gamma\mathcal G(t),
  \qquad
  \mathcal G(0)=0.
\end{equation}
Define
\[
  e_n^\tau:=\mathcal G_n^\tau-\mathcal G(t_n),
  \qquad 0\le n\le N_\tau.
\]
Then
\begin{equation}\label{eq:error-renewal-proof}
  e_n^\tau
  =
  a_\tau\sum_{k=1}^n\varphi_k\,e_{n-k}^\tau
  -L_n^\tau,
\end{equation}
where
\begin{equation}\label{eq:local-defect-def-proof}
  L_n^\tau
  :=
  \mathcal G(t_n)
  -
  a_\tau\sum_{k=1}^n\varphi_k\,\mathcal G(t_{n-k})
  -
  \tau^{-\alpha}\Phi_n.
\end{equation}

\medskip
\noindent
\textit{Step 2: defect decomposition.}
Introduce the exact tail sequence
\[
  \bar\varphi_j:=\sum_{m=j+1}^\infty \varphi_m,
  \qquad j\ge0.
\]
For the kernel from Definition~\ref{def:params},
\begin{equation}\label{eq:exact-tail}
  \bar\varphi_0=1,
  \qquad
  \bar\varphi_j=\frac{j^{-\alpha}}{\Gamma(1-\alpha)},
  \qquad j\ge1.
\end{equation}
Since $\varphi_k=\bar\varphi_{k-1}-\bar\varphi_k$ and $\mathcal G(0)=0$,
summation by parts yields
\begin{equation}\label{eq:abel-G-proof}
  \mathcal G(t_n)-\sum_{k=1}^n\varphi_k\,\mathcal G(t_{n-k})
  =
  \sum_{j=0}^{n-1}\bar\varphi_j
  \left(\mathcal G(t_{n-j})-\mathcal G(t_{n-j-1})\right).
\end{equation}
By \eqref{eq:abel-G-proof} and \eqref{eq:exact-tail},
\begin{align}
  &\mathcal G(t_n)-\sum_{k=1}^n\varphi_k\,\mathcal G(t_{n-k})
  \nonumber\\
  &=
  \int_0^1 \frac1\tau \mathcal G'(t_n-v/\tau)\,dv
  +
  \frac1{\Gamma(1-\alpha)}
  \sum_{j=1}^{n-1}
  j^{-\alpha}\int_j^{j+1}\frac1\tau\mathcal G'(t_n-v/\tau)\,dv.
  \label{eq:discrete-caputo-proof}
\end{align}
On the other hand, from \eqref{eq:G-cont-fde-proof},
\begin{equation}\label{eq:continuous-caputo-proof}
  \tau^{-\alpha}\left(1-\gamma\mathcal G(t_n)\right)
  =
  \frac1{\Gamma(1-\alpha)}
  \int_0^n v^{-\alpha}\frac1\tau\mathcal G'(t_n-v/\tau)\,dv.
\end{equation}
Using $a_\tau=1-\gamma\tau^{-\alpha}$ and $\Phi_n=1-\bar\varphi_n$, we may
therefore rewrite \eqref{eq:local-defect-def-proof} as
\begin{equation}\label{eq:Ln-split-proof}
  L_n^\tau
  =
  \mathcal D_n^\tau
  +
  \gamma\tau^{-\alpha}
  \left(
    \sum_{k=1}^n\varphi_k\,\mathcal G(t_{n-k})-\mathcal G(t_n)
  \right)
  +\tau^{-\alpha}\bar\varphi_n,
\end{equation}
where $\mathcal D_n^\tau$ is the quadrature defect from
Lemma~\ref{lem:quadrature-defect-Gprime}.

\medskip
\noindent
\textit{Step 3: bound the main defect.}
By Lemma~\ref{lem:quadrature-defect-Gprime},
\[
  \mathcal D_n^\tau
  =
  c_1(\alpha)\,\frac1\tau \mathcal G'(t_n)
  +R_n^\tau,
\]
with
$|R_n^\tau|\le C_T\tau^{-\alpha}n^{-\alpha}$.
Since
$\mathcal G'(t)=\frac1\gamma f_{\alpha,\gamma}(t)\le C_T t^{\alpha-1}$,
we obtain
\begin{equation}\label{eq:Ln-main-bound}
  |\mathcal D_n^\tau|
  \le
  C_T|c_1(\alpha)|\,\tau^{-\alpha}n^{\alpha-1}
  +C_T\tau^{-\alpha}n^{-\alpha}.
\end{equation}

\medskip
\noindent
\textit{Step 4: bound the lower-order terms.}
By \eqref{eq:abel-G-proof}, \eqref{eq:exact-tail}, and the bound
\[
  |\mathcal G(t_{m})-\mathcal G(t_{m-1})|
  \le
  \int_{t_{m-1}}^{t_m}|\mathcal G'(s)|\,ds
  \le C_T\tau^{-\alpha}m^{\alpha-1},
  \qquad 1\le m\le N_\tau,
\]
we obtain
\begin{align*}
  \left|
    \mathcal G(t_n)-\sum_{k=1}^n\varphi_k\,\mathcal G(t_{n-k})
  \right|
  &\le
  C_T\tau^{-\alpha}
  \left(
    n^{\alpha-1}
    +
    \sum_{j=1}^{n-1}j^{-\alpha}(n-j)^{\alpha-1}
  \right).
\end{align*}
Splitting the discrete convolution at $j=\lfloor n/2\rfloor$ gives
\[
  \sum_{j=1}^{n-1}j^{-\alpha}(n-j)^{\alpha-1}
  \le
  Cn^{\alpha-1}\sum_{j\le n/2}j^{-\alpha}
  +
  Cn^{-\alpha}\sum_{r\le n/2}r^{\alpha-1}
  \le C,
\]
and hence,
\[
  \left|
    \mathcal G(t_n)-\sum_{k=1}^n\varphi_k\,\mathcal G(t_{n-k})
  \right|
  \le C_T\tau^{-\alpha}.
\]
Since $\bar\varphi_n=n^{-\alpha}/\Gamma(1-\alpha)$ for $n\ge1$, it follows that
\begin{equation}\label{eq:Ln-lower-order}
  \left|
    \gamma\tau^{-\alpha}
    \left(
      \sum_{k=1}^n\varphi_k\,\mathcal G(t_{n-k})-\mathcal G(t_n)
    \right)
    +\tau^{-\alpha}\bar\varphi_n
  \right|
  \le
  C_T\left(\tau^{-2\alpha}+\tau^{-\alpha}n^{-\alpha}\right).
\end{equation}
Combining \eqref{eq:Ln-split-proof}, \eqref{eq:Ln-main-bound}, and
\eqref{eq:Ln-lower-order}, we conclude that
\begin{equation}\label{eq:Ln-final-bound}
  |L_n^\tau|
  \le
  C_T\tau^{-\alpha}n^{-\alpha}
  +
  C_T|c_1(\alpha)|\,\tau^{-\alpha}n^{\alpha-1},
  \qquad 1\le n\le N_\tau.
\end{equation}

\medskip
\noindent
\textit{Step 5: renewal amplification.}
From \eqref{eq:error-renewal-proof}, inversion by the renewal kernel gives
\begin{equation}\label{eq:error-inversion-proof}
  e_n^\tau
  =
  -L_n^\tau-\sum_{j=1}^{n-1}\psi_{n-j}^\tau L_j^\tau.
\end{equation}
By Lemma~\ref{lem:psi-pointwise},
$\psi_m^\tau\le C_T m^{\alpha-1}$
for any $1\le m\le N_\tau$.
Hence, using \eqref{eq:Ln-final-bound},
\begin{align*}
  \sum_{j=1}^{n-1}\psi_{n-j}^\tau |L_j^\tau|
  \le
  C_T\tau^{-\alpha}
  \sum_{j=1}^{n-1}(n-j)^{\alpha-1}j^{-\alpha}
  +
  C_T|c_1(\alpha)|\,\tau^{-\alpha}
  \sum_{j=1}^{n-1}(n-j)^{\alpha-1}j^{\alpha-1}.
\end{align*}
The first sum is uniformly bounded in $n$, while the second satisfies
\[
  \sum_{j=1}^{n-1}(n-j)^{\alpha-1}j^{\alpha-1}
  \le C n^{2\alpha-1}.
\]
Since $n\le N_\tau\le T\tau$,
$\tau^{-\alpha}n^{2\alpha-1}\le C_T\tau^{-(1-\alpha)}$.
Therefore,
\[
  \sum_{j=1}^{n-1}\psi_{n-j}^\tau |L_j^\tau|
  \le
  C_T\tau^{-\alpha}
  +
  C_T|c_1(\alpha)|\,\tau^{-(1-\alpha)}.
\]
The same bound follows for $|L_n^\tau|$ itself from \eqref{eq:Ln-final-bound}. Combining this with \eqref{eq:error-inversion-proof}, we obtain
\[
  |e_n^\tau|
  \le
  C_T\tau^{-\alpha}
  +
  C_{\mathrm{res}}(\alpha)\tau^{-(1-\alpha)},
  \qquad 0\le n\le N_\tau,
\]
where
$C_{\mathrm{res}}(\alpha)\to0$
as $\alpha\uparrow1^-$.
Taking the supremum over $0\le n\le N_\tau$ proves
\eqref{eq:cum-resolvent-uniform-comparison}.


\subsection{Proof of Proposition~\ref{prop:resolvent-transfer-from-cumulative}}\label{app:proof-resolvent-transfer-from-cumulative}

\begin{proof}
Define
\[
  (\mathcal Q_\tau g)_n
  :=
  \frac{1}{\gamma\tau}\sum_{j=1}^{n-1}\mathsf{S}_{n-j}^\tau g(t_j),
  \qquad
  (\mathcal Q g)(t_n)
  :=
  \frac1\gamma\int_0^{t_n}f_{\alpha,\gamma}(t_n-s)g(s)\,ds.
\]

Recall from the definition of $\mathcal G_n^\tau$ that
\[
  \mathcal G_n^\tau-\mathcal G_{n-1}^\tau
  =
  \frac{1}{\gamma\tau}\mathsf{S}_n^\tau,
  \qquad n\ge1,
\]
and
\[
  \mathcal G'(t)=\frac1\gamma f_{\alpha,\gamma}(t),\qquad t>0.
\]
In particular,
\[
  |\mathcal G'(t)|\le C_T t^{\alpha-1},
  \qquad 0<t\le T,
\]
and therefore,
\[
  |\mathcal G(t)-\mathcal G(s)|\le C_T|t-s|^\alpha,
  \qquad 0\le s,t\le T.
\]

Using summation by parts, for $1\le n\le N_\tau$,
\begin{equation}\label{eq:Qtau-abel}
  (\mathcal Q_\tau g)_n
  =
  g(0)\mathcal G_{n-1}^\tau
  +\sum_{j=0}^{n-2}\mathcal G_{n-j-1}^\tau
   \int_{t_j}^{t_{j+1}} g'(s)\,ds.
\end{equation}
On the continuous side, integration by parts yields
\begin{equation}\label{eq:Q-abel}
  (\mathcal Q g)(t_n)
  =
  g(0)\mathcal G(t_n)
  +\int_0^{t_n}\mathcal G(t_n-s)g'(s)\,ds.
\end{equation}
Subtracting \eqref{eq:Q-abel} from \eqref{eq:Qtau-abel}, we obtain
\begin{align*}
  &(\mathcal Q_\tau g)_n-(\mathcal Q g)(t_n)
    =
  g(0)\left(\mathcal G_{n-1}^\tau-\mathcal G(t_n)\right)
  \\
  &
  \qquad\qquad+
  \sum_{j=0}^{n-2}
  \int_{t_j}^{t_{j+1}}
  \left(\mathcal G_{n-j-1}^\tau-\mathcal G(t_n-s)\right)g'(s)\,ds
  -
  \int_{t_{n-1}}^{t_n}\mathcal G(t_n-s)g'(s)\,ds.
\end{align*}
Hence,
\begin{align}
  \left|(\mathcal Q_\tau g)_n-(\mathcal Q g)(t_n)\right|
  &\le
  |g(0)|\,|\mathcal G_{n-1}^\tau-\mathcal G(t_n)|
  \nonumber
  \\
  &\qquad
  +
  \sum_{j=0}^{n-2}
  \int_{t_j}^{t_{j+1}}
  \left|\mathcal G_{n-j-1}^\tau-\mathcal G(t_n-s)\right|
  |g'(s)|\,ds
  \nonumber
  \\
  &\qquad\qquad
  +
  \int_{t_{n-1}}^{t_n}|\mathcal G(t_n-s)|\,|g'(s)|\,ds.\label{3:terms:2}
\end{align}
For the first term on the right hand side in \eqref{3:terms:2}, we insert and subtract $\mathcal G(t_{n-1})$:
\[
  |\mathcal G_{n-1}^\tau-\mathcal G(t_n)|
  \le
  |\mathcal G_{n-1}^\tau-\mathcal G(t_{n-1})|
  +
  |\mathcal G(t_{n-1})-\mathcal G(t_n)|.
\]
By Proposition~\ref{prop:cum-resolvent-uniform-comparison} and the
$\alpha$-H\"older continuity of $\mathcal G$, this gives
\[
  |\mathcal G_{n-1}^\tau-\mathcal G(t_n)|
  \le
  C_T\left(\tau^{-\alpha}+{C_{\mathrm{res}}(\alpha)}\,\tau^{-(1-\alpha)}\right).
\]

We split
\[
  \left|\mathcal G_{n-j-1}^\tau-\mathcal G(t_n-s)\right|
  \le
  \left|\mathcal G_{n-j-1}^\tau-\mathcal G(t_{n-j-1})\right|
  +
  \left|\mathcal G(t_{n-j-1})-\mathcal G(t_n-s)\right|.
\]
By Proposition~\ref{prop:cum-resolvent-uniform-comparison}, the first term in \eqref{3:terms:2} is
bounded by
\[
  C_T\left(\tau^{-\alpha}+{C_{\mathrm{res}}(\alpha)}\,\tau^{-(1-\alpha)}\right).
\]
Since $\mathcal G$ is $\alpha$-H\"older continuous on $[0,T]$, the second term on the right hand side in \eqref{3:terms:2} is bounded
by $C_T\tau^{-\alpha}$ uniformly for $s\in[t_j,t_{j+1}]$. Therefore,
\[
  \left|\mathcal G_{n-j-1}^\tau-\mathcal G(t_n-s)\right|
  \le
  C_T\left(\tau^{-\alpha}+{C_{\mathrm{res}}(\alpha)}\,\tau^{-(1-\alpha)}\right).
\]
Also $\mathcal G$ is bounded on $[0,T]$, and thus the last integral in \eqref{3:terms:2} is bounded by
\[
  C_T\int_{t_{n-1}}^{t_n}|g'(s)|\,ds.
\]
Using \eqref{eq:g-assump-2}, we obtain
\[
  \int_{t_{n-1}}^{t_n}|g'(s)|\,ds
  \le
  C_g\int_{t_{n-1}}^{t_n}s^{\alpha-1}\,ds.
\]
If $n=1$, then
\[
  \int_{0}^{1/\tau}s^{\alpha-1}\,ds
  =\frac1\alpha\tau^{-\alpha}.
\]
If $n\ge2$, then $s\ge t_1=1/\tau$ on $[t_{n-1},t_n]$, so
\[
  \int_{t_{n-1}}^{t_n}s^{\alpha-1}\,ds
  \le
  \tau^{-1}(t_{n-1})^{\alpha-1}
  \le C_T\tau^{-1}
  \le C_T\tau^{-\alpha},
\]
since $\alpha\in(0,1)$. Therefore, uniformly for $1\le n\le N_\tau$,
\[
  \int_{t_{n-1}}^{t_n}|g'(s)|\,ds
  \le C_T C_g\,\tau^{-\alpha}.
\]
Combining the preceding bounds gives
\[
  \left|(\mathcal Q_\tau g)_n-(\mathcal Q g)(t_n)\right|
  \le
  C_T' \left(|g(0)|+C_g\right)\tau^{-\alpha}
  +
  C_{\mathrm{op}}(\alpha)\left(|g(0)|+C_g\right)\tau^{-(1-\alpha)},
\]
uniformly in $1\le n\le N_\tau$, where
$C_{\mathrm{op}}(\alpha)\to0$
as $\alpha\uparrow1^-$.
This proves \eqref{eq:resolvent-operator-compare-main}.
\end{proof}

\subsection{Lemma~\ref{lem:resolvent-gz-compare}}\label{app:proof-resolvent-gz-compare}

\begin{lemma}[Resolvent comparison for the continuous Riccati nonlinearity]
\label{lem:resolvent-gz-compare}
For every fixed $\eta\in\R$ and $R>0$, there exists a coefficient
$C_g(\alpha)\ge0$ and a constant $C_{\eta,R,T}>0$ such that
\begin{equation}\label{eq:resolvent-gz-compare-main}
\begin{aligned}
  &\sup_{1\le n\le N_\tau}\sup_{z\in\Gamma_{\eta,R}}
  \left|
    \frac{1}{\gamma\tau}\sum_{j=1}^{n-1}\mathsf S_{n-j}^\tau F_z(h(t_j,z))
    -
    \frac1\gamma\int_0^{t_n}f_{\alpha,\gamma}(t_n-s)F_z(h(s,z))\,ds
  \right|
  \\
  &\le
  C_{\eta,R,T}\tau^{-\alpha}+C_g(\alpha)\tau^{-(1-\alpha)},
\end{aligned}
\end{equation}
with $C_g(\alpha)\to0$ as $\alpha\uparrow1^-$.
\end{lemma}

\begin{proof}
By Lemma~\ref{lem:h-time-regularity}, the functions
\[
  g_z(t):=F_z(h(t,z))
\]
satisfy uniformly for $z\in\Gamma_{\eta,R}$ the bounds
\[
  |g_z(t)-g_z(0)|\le C_{\eta,R,T}t^\alpha,
  \qquad
  |\partial_t g_z(t)|\le C_{\eta,R,T}t^{\alpha-1}.
\]
Moreover,
$g_z(0)=\frac12(z^2-z)$
is uniformly bounded on $\Gamma_{\eta,R}$. Applying
Proposition~\ref{prop:resolvent-transfer-from-cumulative} with $g=g_z$ yields
\begin{align*}
  &\sup_{1\le n\le N_\tau}\sup_{z\in\Gamma_{\eta,R}}
  \left|
    \frac{1}{\gamma\tau}\sum_{j=1}^{n-1}\mathsf{S}_{n-j}^\tau F_z(h(t_j,z))
    -
    \frac1\gamma\int_0^{t_n}f_{\alpha,\gamma}(t_n-s)F_z(h(s,z))\,ds
  \right|\\
  &\le
  C_{\eta,R,T}\tau^{-\alpha}+C_{g}(\alpha)\tau^{-(1-\alpha)},
\end{align*}
where
$C_g(\alpha)\to0$
as $\alpha\uparrow1^-$.
This proves \eqref{eq:resolvent-gz-compare-main}.
\end{proof}

\subsection{Lemma~\ref{lem:discrete-weakly-singular-gronwall}}\label{app:proof-discrete-weakly-singular-gronwall}

\begin{lemma}[Discrete weakly singular Gr\"onwall on the $\tau$-grid]
\label{lem:discrete-weakly-singular-gronwall}
Fix $\alpha\in(0,1)$, $T>0$, and $B>0$. Suppose that a nonnegative sequence
$(a_n)_{n=0}^{N_\tau}$ satisfies
\begin{equation}\label{eq:discrete-weakly-singular-gronwall-assump}
  a_n\le A+B\tau^{-\alpha}\sum_{j=1}^{n-1}(n-j)^{\alpha-1}a_j,
  \qquad 1\le n\le N_\tau,
\end{equation}
for some $A\ge0$. Then
\begin{equation}\label{eq:discrete-weakly-singular-gronwall-concl}
  a_n\le A E_\alpha(B\Gamma(\alpha)t_n^\alpha),
  \qquad 1\le n\le N_\tau.
\end{equation}
In particular,
\[
  \sup_{0\le n\le N_\tau}a_n\le C_{\alpha,B,T}A.
\]
\end{lemma}

\begin{proof}
Define iteratively
\[
  b_n^{(0)}:=A,
  \qquad
  b_n^{(m+1)}
  :=
  A+B\tau^{-\alpha}\sum_{j=1}^{n-1}(n-j)^{\alpha-1}b_j^{(m)},
  \qquad m\ge0.
\]
Since the kernel is nonnegative,
\eqref{eq:discrete-weakly-singular-gronwall-assump} implies by induction on
$m$ that
\[
  a_n\le b_n^{(m)},
  \qquad 0\le n\le N_\tau,\quad m\ge0.
\]
We claim that, for every $m\ge0$,
\begin{equation}\label{eq:discrete-weakly-singular-gronwall-iterates}
  b_n^{(m)}
  \le
  A\sum_{r=0}^{m}
  \frac{\left(B\Gamma(\alpha)t_n^\alpha\right)^r}{\Gamma(r\alpha+1)},
  \qquad 0\le n\le N_\tau.
\end{equation}
The case $m=0$ is immediate. Assume the claim holds at rank $m$. Since
$\tau^{-\alpha}(n-j)^{\alpha-1}
  =
  \tau^{-1}(t_n-t_j)^{\alpha-1}$,
and since $\alpha-1<0$ while $r\alpha\ge0$, for every
$s\in[t_j,t_{j+1}]$, we have
\[
  (t_n-t_j)^{\alpha-1}t_j^{r\alpha}
  \le
  (t_n-s)^{\alpha-1}s^{r\alpha}.
\]
Since $\alpha-1<0$, the map $u\mapsto (t_n-u)^{\alpha-1}$ is increasing on
$[0,t_n)$, while $u\mapsto u^{r\alpha}$ is also increasing. Hence, for
$s\in[t_j,t_{j+1}]$,
\begin{align}
  (t_n-t_j)^{\alpha-1}\le (t_n-s)^{\alpha-1},
\qquad\text{and}\qquad
  t_j^{r\alpha}\le s^{r\alpha},
  \label{eq:gronwall-pointwise}
\end{align}
and multiplying the two inequalities in \eqref{eq:gronwall-pointwise} yields
\[
  (t_n-t_j)^{\alpha-1}t_j^{r\alpha}
  \le
  (t_n-s)^{\alpha-1}s^{r\alpha}.
\]
Therefore,
\begin{align*}
  \tau^{-\alpha}\sum_{j=1}^{n-1}(n-j)^{\alpha-1}t_j^{r\alpha}
  &=
  \tau^{-1}\sum_{j=1}^{n-1}(t_n-t_j)^{\alpha-1}t_j^{r\alpha}
  \\
  &\le
  \sum_{j=1}^{n-1}\int_{t_j}^{t_{j+1}}(t_n-s)^{\alpha-1}s^{r\alpha}\,ds
  \\
  &\le
  \int_0^{t_n}(t_n-s)^{\alpha-1}s^{r\alpha}\,ds
 =
  \frac{\Gamma(\alpha)\Gamma(r\alpha+1)}{\Gamma((r+1)\alpha+1)}
  t_n^{(r+1)\alpha}.
\end{align*}
Using the induction hypothesis in the definition of $b_n^{(m+1)}$ yields
\[
  b_n^{(m+1)}
  \le
  A\sum_{r=0}^{m+1}
  \frac{\left(B\Gamma(\alpha)t_n^\alpha\right)^r}{\Gamma(r\alpha+1)},
\]
which proves \eqref{eq:discrete-weakly-singular-gronwall-iterates}. For each
fixed $n$, the sequence $m\mapsto b_n^{(m)}$ is nondecreasing because the
kernel in \eqref{eq:discrete-weakly-singular-gronwall-assump} is nonnegative.
Moreover, \eqref{eq:discrete-weakly-singular-gronwall-iterates} provides a
uniform upper bound independent of $m$, so $b_n^{(m)}$ converges as
$m\to\infty$. Since $a_n\le b_n^{(m)}$ for every $m$, passing to the limit and
using the monotone convergence of the truncated Mittag--Leffler series gives
\eqref{eq:discrete-weakly-singular-gronwall-concl}. The final uniform bound
follows since $t_n\le T$.
\end{proof}


\subsection{Proof of Proposition~\ref{prop:Volterra-rate-fixed}}\label{app:proof-Volterra-rate-fixed}
Recall from Lemma~\ref{lem:renewal-inversion} that
\begin{equation}\label{eq:Htau-resolvent-discrete-newproof}
\begin{aligned}
  H_n^\tau(z)
  &=
  \tau^{-\alpha}F_z(H_n^\tau(z))
  +\widehat E_n^\tau(z)
\\
  &\quad
  +
  \frac{a_\tau}{1-a_\tau}\tau^{-\alpha-1}
  \sum_{j=1}^{n-1}\mathsf{S}_{n-j}^\tau F_z(H_j^\tau(z))
  +
  \sum_{j=1}^{n-1}\psi_{n-j}^\tau\widehat E_j^\tau(z),
\end{aligned}
\end{equation}
where
\[
  \sup_{0\le n\le N_\tau}\sup_{z\in\Gamma_{\eta,R}}
  |\widehat E_n^\tau(z)|
  \le C_{\eta,R,T}\tau^{-2\alpha}.
\]

On the continuous side, Lemma~\ref{lem:continuous-resolvent} gives
\[
  h(t_n,z)
  =
  \frac1\gamma\int_0^{t_n}
    f_{\alpha,\gamma}(t_n-s)F_z(h(s,z))\,ds.
\]

Set
\[
  e_n^\tau(z):=H_n^\tau(z)-h(t_n,z).
\]
Subtracting the two equations in \eqref{eq:Htau-resolvent-discrete-newproof} and \eqref{eq:continuous-resolvent}, and inserting the intermediate discrete resolvent
operator applied to $F_z(h(\cdot,z))$, we obtain
\begin{align*}
  |e_n^\tau(z)|
  &\le
  \tau^{-\alpha}|F_z(H_n^\tau(z))|
  +|\widehat E_n^\tau(z)|
  +\sum_{j=1}^{n-1}\psi_{n-j}^\tau |\widehat E_j^\tau(z)|
  \\
  &\quad
  +
  \left|
    \frac{a_\tau}{1-a_\tau}\tau^{-\alpha-1}
    \sum_{j=1}^{n-1}\mathsf{S}_{n-j}^\tau F_z(H_j^\tau(z))
    -
    \frac{1}{\gamma\tau}\sum_{j=1}^{n-1}\mathsf{S}_{n-j}^\tau F_z(h(t_j,z))
  \right|
  \\
  &\quad\quad
  +
  \left|
    \frac{1}{\gamma\tau}\sum_{j=1}^{n-1}\mathsf{S}_{n-j}^\tau F_z(h(t_j,z))
    -
    \frac1\gamma\int_0^{t_n}
      f_{\alpha,\gamma}(t_n-s)F_z(h(s,z))\,ds
  \right|.
\end{align*}

The last term is bounded by
\[
  C_{\eta,R,T}
  \left(\tau^{-\alpha}+{C_g(\alpha)}\,\tau^{-(1-\alpha)}\right)
\]
by Lemma~\ref{lem:resolvent-gz-compare}. The explicit local term
$\tau^{-\alpha}|F_z(H_n^\tau(z))|$ is of order $\tau^{-\alpha}$ because
$H_n^\tau(z)$ remains uniformly bounded on bounded strips, as follows from
Proposition~\ref{prop:smallness-new33} and the rescaling
\eqref{eq:H-rescaled-def}. The same boundedness implies that $F_z$ is
uniformly Lipschitz on the relevant range:
\[
  |F_z(x)-F_z(y)|\le C_{\eta,R,T}|x-y|.
\]

Using
$\frac{a_\tau}{1-a_\tau}\tau^{-\alpha-1}
  =
  \frac{1}{\gamma\tau}
  +\mathcal O(\tau^{-1-\alpha})$,
together with
$\sum_{m\ge1}\mathsf{S}_m^\tau=\tau$,
we absorb the coefficient mismatch into another $C_{\eta,R,T}\tau^{-\alpha}$
term and obtain
\[
  |e_n^\tau(z)|
  \le
  C_{\eta,R,T}
  \left(\tau^{-\alpha}+{C_g(\alpha)}\,\tau^{-(1-\alpha)}\right)
  +
  \frac{C_{\eta,R,T}}{\tau}
  \sum_{j=1}^{n-1}\mathsf{S}_{n-j}^\tau |e_j^\tau(z)|
  +
  \sum_{j=1}^{n-1}\psi_{n-j}^\tau |\widehat E_j^\tau(z)|.
\]

Since
\[
  \sup_{1\leq j\leq n-1} |\widehat E_j^\tau(z)|\le C_{\eta,R,T}\tau^{-2\alpha},
  \qquad
  \sum_{m\ge1}\psi_m^\tau=\frac{a_\tau}{1-a_\tau}\lesssim \tau^\alpha,
\]
we have
\[
  \sum_{j=1}^{n-1}\psi_{n-j}^\tau |\widehat E_j^\tau(z)|
  \le
  C_{\eta,R,T}\tau^{-2\alpha}\sum_{m\ge1}\psi_m^\tau
  \le
  C_{\eta,R,T}\tau^{-\alpha}.
\]
Hence
\[
  |e_n^\tau(z)|
  \le
  C_{\eta,R,T}
  \left(\tau^{-\alpha}+{C_g(\alpha)}\,\tau^{-(1-\alpha)}\right)
  +
  \frac{C_{\eta,R,T}}{\tau}
  \sum_{j=1}^{n-1}\mathsf{S}_{n-j}^\tau |e_j^\tau(z)|.
\]

Finally, using the pointwise bound from Lemma~\ref{lem:psi-pointwise},
\[
  \frac{1}{\tau}\mathsf{S}_{n-j}^\tau
  =
  \frac{1-a_\tau}{a_\tau}\psi_{n-j}^\tau
  \le
  C_T\tau^{-\alpha}(n-j)^{\alpha-1}
  =
  \frac{C_T}{\tau^\alpha (n-j)^{1-\alpha}},
\]
and setting
\[
  a_n:=\sup_{0\le m\le n}\sup_{z\in\Gamma_{\eta,R}}|e_m^\tau(z)|,
\]
we obtain from the preceding inequality that
\[
  a_n
  \le
  C_{\eta,R,T}\tau^{-\alpha}
  +
  C_{\mathrm{Ric}}(\alpha)\tau^{-(1-\alpha)}
  +
  C_T\tau^{-\alpha}\sum_{j=1}^{n-1}(n-j)^{\alpha-1}a_j,
\]
where
$C_{\mathrm{Ric}}(\alpha)\to0$
as $\alpha\uparrow1^-$.
Applying Lemma~\ref{lem:discrete-weakly-singular-gronwall} with
\[
  A=C_{\eta,R,T}\tau^{-\alpha}
  +
  C_{\mathrm{Ric}}(\alpha)\tau^{-(1-\alpha)}
\]
and $B=C_T$, we obtain
\[
  \sup_{0\le n\le N_\tau}\sup_{z\in\Gamma_{\eta,R}}|e_n^\tau(z)|
  \le
  C_{\eta,R,T}\tau^{-\alpha}
  +
  C_{\mathrm{Ric}}(\alpha)\tau^{-(1-\alpha)}.
\]
This proves \eqref{eq:Volterra-rate-fixed}.

\subsection{Proof of Lemma~\ref{lem:exponent-comparison}}\label{app:proof-exponent-comparison}

\begin{proof}
By Proposition~\ref{prop:Volterra-rate-fixed},
\[
  \sup_{0\le n\le N_\tau}\sup_{z\in\Gamma_{\eta,R}}
  \left|H_n^\tau(z)-h(t_n,z)\right|
  \le
  C_{\eta,R,T}\tau^{-\alpha}+{C_{\mathrm{Ric}}(\alpha)}\,\tau^{-(1-\alpha)},
\]
where $t_n:=n/\tau$.
Using
\[
  \log\Phi^\tau(z,T)
  =
  \frac{\theta}{\mu}(1-a_\tau)
  \sum_{m=1}^{N_\tau}\hat\mu_\tau(m)\,H_{N_\tau-m}^\tau(z),
\]
we write
\begin{align}
  &\left|
    \log\Phi^\tau(z,T)-\log\phi(z,T)
  \right|
  \nonumber
  \\
  &\le
  \frac{\theta}{\mu}(1-a_\tau)
  \sum_{m=1}^{N_\tau}\hat\mu_\tau(m)
  \left|
    H_{N_\tau-m}^\tau(z)-h\!\left(\frac{N_\tau-m}{\tau},z\right)
  \right|
  \nonumber
  \\
  &\qquad
  +
  \frac{\theta}{\mu}(1-a_\tau)
  \sum_{m=1}^{N_\tau}\hat\mu_\tau(m)
  \left|
    h\!\left(\frac{N_\tau-m}{\tau},z\right)-h\!\left(T-\frac{m}{\tau},z\right)
  \right|
  \nonumber
  \\
  &\qquad\qquad
  +
  \left|
    \frac{\theta}{\mu}(1-a_\tau)
    \sum_{m=1}^{N_\tau}\hat\mu_\tau(m)
    h\!\left(T-\frac{m}{\tau},z\right)
    -
    \log\phi(z,T)
  \right|.\label{3:terms}
\end{align}

For the first term in \eqref{3:terms}, Proposition~\ref{prop:Volterra-rate-fixed} gives
\[
  \left|
    H_{N_\tau-m}^\tau(z)-h\!\left(\frac{N_\tau-m}{\tau},z\right)
  \right|
  \le
  C_{\eta,R,T}\tau^{-\alpha}+{C_{\mathrm{Ric}}(\alpha)}\,\tau^{-(1-\alpha)}
\]
uniformly in $m$ and $z$. Moreover, Proposition~\ref{prop:baseline-riemann},
applied to the constant function $f\equiv1$, yields
\begin{equation}\label{use:again}
  \frac{1-a_\tau}{\mu}\sum_{m=1}^{N_\tau}\hat\mu_\tau(m)\le C_T.
\end{equation}
Hence the first term in \eqref{3:terms} is bounded by
\[
  C_{\eta,R,T}\tau^{-\alpha}+{C_{\mathrm{Ric}}(\alpha)}\,\tau^{-(1-\alpha)}.
\]

For the second term in \eqref{3:terms}, note that
\[
  \left|
    \frac{N_\tau-m}{\tau}-\left(T-\frac{m}{\tau}\right)
  \right|
  =
  \left|\frac{N_\tau}{\tau}-T\right|
  \le \frac1\tau.
\]
Since $h(\cdot,z)$ is uniformly $\alpha$-H\"older continuous on $[0,T]$ by
Proposition~\ref{prop:riccati-regularity}, we obtain
\[
  \left|
    h\!\left(\frac{N_\tau-m}{\tau},z\right)-h\!\left(T-\frac{m}{\tau},z\right)
  \right|
  \le C_{\eta,R,T}\tau^{-\alpha}
\]
uniformly in $m$ and $z$. Using again \eqref{use:again},
it follows that the second term in \eqref{3:terms} is bounded by
$C_{\eta,R,T}\tau^{-\alpha}$.

For the third term in \eqref{3:terms}, we apply Proposition~\ref{prop:baseline-riemann} with
$f(s)=h(s,z)$. By Proposition~\ref{prop:riccati-regularity}, the function $h(\cdot,z)$
is uniformly $\alpha$-H\"older continuous on $[0,T]$, and
\[
  |h(t,z)|\le C_{\eta,R,T}t^\alpha,
\]
uniformly for $z\in\Gamma_{\eta,R}$. Therefore,
\begin{align*}
  \left|
    \frac{\theta}{\mu}(1-a_\tau)
    \sum_{m=1}^{N_\tau}\hat\mu_\tau(m)
    h\!\left(T-\frac{m}{\tau},z\right)
    -
    \log\phi(z,T)
  \right|
  \le
  C_{\eta,R,T}\tau^{-\alpha}
  +
  C_{\mathrm{base}}(\alpha)\tau^{-(1-\alpha)},
\end{align*}
uniformly on $\Gamma_{\eta,R}$, where $C_{\mathrm{base}}(\alpha)\to0$ as $\alpha\uparrow1^-$.

Combining the bounds on the three terms in \eqref{3:terms}, and enlarging
$C_{\eta,R,T}$ if necessary, we obtain
\[
  \sup_{z\in\Gamma_{\eta,R}}
  \left|
    \log\Phi^\tau(z,T)-\log\phi(z,T)
  \right|
  \le
  C_{\eta,R,T}\tau^{-\alpha}
  +
  C_{\log}(\alpha)\tau^{-(1-\alpha)},
\]
where
$C_{\log}(\alpha):=C_{\mathrm{Ric}}(\alpha)+C_{\mathrm{base}}(\alpha)$,
so that
$C_{\log}(\alpha)\to0$
as $\alpha\uparrow1^-$.
This is exactly \eqref{eq:log-transform-bound}.
\end{proof}


\subsection{Proof of Proposition~\ref{prop:transform-bound-fixed}}\label{app:proof-transform-bound-fixed}

\begin{proof}
For $z=\eta+\ii\xi\in\Gamma_{\eta,R}$, the triangle inequality gives
\[
  |\Phi^\tau(z,T)|
  =
  \left|\E\!\left[e^{zP_T^\tau}\right]\right|
  \le
  \E\!\left[e^{\eta P_T^\tau}\right]
  =
  \Phi^\tau(\eta,T),
\]
and similarly
\[
  |\phi(z,T)|
  =
  \left|\E\!\left[e^{zP_T}\right]\right|
  \le
  \E\!\left[e^{\eta P_T}\right]
  =
  \phi(\eta,T).
\]
Since $\eta\in\mathcal S_T$, both $\Phi^\tau(\eta,T)$ and $\phi(\eta,T)$ are finite,
and therefore the transforms are uniformly bounded on $\Gamma_{\eta,R}$ for all
sufficiently large $\tau$.

Now set
\[
  x:=\log\Phi^\tau(z,T),
  \qquad
  y:=\log\phi(z,T).
\]
By Lemma~\ref{lem:exponent-comparison},
\[
  |x-y|
  \le
  C_{\eta,R,T}\tau^{-\alpha}
  +
  C_{\log}(\alpha)\tau^{-(1-\alpha)},
\]
uniformly on $\Gamma_{\eta,R}$, with $C_{\log}(\alpha)\to0$ as
$\alpha\uparrow1^-$. Using
\[
  e^x-e^y=(x-y)\int_0^1 e^{tx+(1-t)y}\,dt,
\]
we obtain
\begin{align*}
  |\Phi^\tau(z,T)-\phi(z,T)|
  &=
  |e^x-e^y|
  \\
  &\le
  |x-y|
  \int_0^1 \left|e^{tx+(1-t)y}\right|\,dt.
\end{align*}
Moreover,
\[
  \left|e^{tx+(1-t)y}\right|
  \le
  e^{t\mathrm{Re}\ x+(1-t)\mathrm{Re}\ y}
  \le
  \max\{|\Phi^\tau(z,T)|,|\phi(z,T)|\}
  \le C_{\eta,R,T},
\]
uniformly for $z\in\Gamma_{\eta,R}$ and all sufficiently large $\tau$.
Therefore,
\[
  |\Phi^\tau(z,T)-\phi(z,T)|
  \le
  C_{\eta,R,T}\tau^{-\alpha}
  +
  C_{\Phi}(\alpha)\tau^{-(1-\alpha)},
\]
uniformly on $\Gamma_{\eta,R}$, where one may take
\[
  C_{\Phi}(\alpha):=C_{\eta,R,T}\,C_{\log}(\alpha),
\]
so that $C_{\Phi}(\alpha)\to0$ as $\alpha\uparrow1^-$.
This proves \eqref{eq:transform-bound-fixed}. The equivalent form
\eqref{eq:transform-bound-xi} is just the parametrization $z=\eta+\ii\xi$.
\end{proof}

\subsection{Proof of Proposition~\ref{prop:pricing-local-trunc}}\label{app:proof-pricing-local-trunc}
Subtracting \eqref{eq:CRtau-def-trunc} and \eqref{eq:CR-def-trunc}, we obtain
\begin{align*}
  \frac{C_R^\tau(T,K)-C_R(T,K)}{S_0}
  =
  \frac{e^{-(\eta-1)k}}{2\pi}
  \int_{-R}^{R}
    e^{-\ii \xi k}
    \frac{
      \Phi^\tau(\eta+\ii \xi,T)-\phi(\eta+\ii \xi,T)
    }
    {(\eta+\ii \xi)(\eta+\ii \xi-1)}
  \,d\xi.
\end{align*}
Hence,
\begin{align*}
  \frac{|C_R^\tau(T,K)-C_R(T,K)|}{S_0}
  &\le
  \frac{e^{-(\eta-1)k}}{2\pi}
  \int_{-R}^{R}
    \frac{
      |\Phi^\tau(\eta+\ii \xi,T)-\phi(\eta+\ii \xi,T)|
    }
    {|(\eta+\ii \xi)(\eta+\ii \xi-1)|}
  \,d\xi.
\end{align*}
By Proposition~\ref{prop:transform-bound-fixed},
\[
  \sup_{|\xi|\le R}
  \left|
    \Phi^\tau(\eta+\ii \xi,T)-\phi(\eta+\ii \xi,T)
  \right|
  \le
  C_{\eta,R,T}\tau^{-\alpha}+C_2(\alpha)\tau^{-(1-\alpha)},
\]
where
$C_2(\alpha)\to0$
as $\alpha\uparrow1^-$.
Therefore,
\begin{align*}
  &|C_R^\tau(T,K)-C_R(T,K)|\\
  &\le
  \frac{S_0 e^{-(\eta-1)k}}{2\pi}
  \left(
    C_{\eta,R,T}\tau^{-\alpha}+C_2(\alpha)\tau^{-(1-\alpha)}
  \right)
  \int_{-R}^{R}\frac{d\xi}{|(\eta+\ii \xi)(\eta+\ii \xi-1)|}.
\end{align*}
Since $\eta>1$, the denominator does not vanish on the contour
$\{\eta+\ii\xi:\ |\xi|\le R\}$, and the integral is finite for each fixed
$R>0$. Absorbing the prefactor and the integral into the constants proves
Proposition~\ref{prop:pricing-local-trunc}. 


\subsection{Corollary~\ref{cor:pricing-local-trunc-uniform}}\label{app:proof-pricing-local-trunc-uniform}

\begin{corollary}[Uniform truncated pricing error on compact moneyness sets]
\label{cor:pricing-local-trunc-uniform}
Fix $\eta>1$, $R>0$, $T>0$, and let $\mathcal K\subset\R$ be compact. Then,
for all sufficiently large $\tau$,
\begin{equation}\label{eq:pricing-local-trunc-uniform}
  \sup_{k\in\mathcal K}
  \left|C_R^\tau(T,S_0e^k)-C_R(T,S_0e^k)\right|
  \le
  C_1\tau^{-\alpha}+C_2(\alpha)\tau^{-(1-\alpha)},
\end{equation}
for some constants $C_1>0$ and $C_2(\alpha)\ge0$, independent of $\tau$, with
$C_2(\alpha)\to0$ as $\alpha\uparrow1^-$.
\end{corollary}

\begin{proof}
The estimate \eqref{eq:pricing-local-trunc-uniform} is uniform in $k$ except for the
prefactor $e^{-(\eta-1)k}$. Since $k\in\mathcal K$ and $\mathcal K$ is compact, 
\[
  \sup_{k\in\mathcal K}e^{-(\eta-1)k}
  =
  e^{-(\eta-1)\inf\mathcal K},
\]
where $\inf\mathcal K$ denotes the smallest element of the compact set $\mathcal K$, i.e.\ $\inf\mathcal K=\min_{k\in\mathcal K}k$.
Taking the supremum over $k\in\mathcal K$ in the estimate from
Proposition~\ref{prop:pricing-local-trunc} yields \eqref{eq:pricing-local-trunc-uniform}.
\end{proof}

\end{document}